\title{Differential geometry of holomorphic vector bundles on a curve}
\author{Florent Schaffhauser}
\address{Departamento de Matem\'aticas, Universidad de Los Andes, Bogot\'a, Colombia.}
\email{florent@uniandes.edu.co}
\newcommand{\GL}{\mathbf{GL}}
\newcommand{\U}{\mathbf{U}}
\newcommand{\bK}{\mathbf{K}}
\newcommand{\K}{\mathbb{K}}
\newcommand{\R}{\mathbb{R}}
\newcommand{\C}{\mathbb{C}}
\newcommand{\Z}{\mathbb{Z}}
\newcommand{\Q}{\mathbb{Q}}
\newcommand{\cU}{\mathcal{U}}
\newcommand{\cM}{\mathcal{M}}
\newcommand{\cN}{\mathcal{N}}
\newcommand{\cE}{\mathcal{E}}
\newcommand{\cG}{\mathcal{G}}
\newcommand{\cA}{\mathcal{A}}
\newcommand{\cL}{\mathcal{L}}
\newcommand{\cF}{\mathcal{F}}
\newcommand{\cC}{\mathcal{C}}
\newcommand{\fk}{\mathfrak{k}}
\newcommand{\Hom}{\mathrm{Hom}}
\newcommand{\Aut}{\mathrm{Aut}}
\newcommand{\Gr}{\mathrm{Gr}}
\newcommand{\Id}{\mathrm{Id}}
\newcommand{\End}{\mathrm{End}\,}
\newcommand{\Dol}{\mathrm{Dol}}
\newcommand{\gr}{\mathrm{gr}}
\renewcommand{\phi}{\varphi}
\renewcommand{\frak}{\mathfrak}
\renewcommand{\O}{\mathcal{O}_{\Si_g}}
\renewcommand{\deg}[1]{\mathit{deg}\,#1}
\renewcommand{\Im}{\mathrm{im}\,}
\newcommand{\lra}{\longrightarrow}
\newcommand{\lmt}{\longmapsto}
\newcommand{\RP}{\R\mathbf{P}}
\newcommand{\CP}{\C\mathbf{P}}
\newcommand{\ceH}{\check{H}}
\newcommand{\Ga}{\Gamma}
\newcommand{\Si}{\Sigma}
\newcommand{\si}{\sigma}
\newcommand{\Om}{\Omega}
\newcommand{\w}{\omega}
\newcommand{\del}{\partial}
\newcommand{\delb}{\overline{\partial}}
\newcommand{\Jac}{\mathcal{J}ac\,}
\newcommand{\Pic}{\mathcal{P}ic\,}
\newcommand{\Mod}{\mathcal{M}_{\Si_g}}
\newcommand{\Mods}{\cN_{\Si_g}}
\newcommand{\wred}{\w^{\mathrm{red}}}
\newcommand{\umu}{\vec{\mu}}
\newcommand{\vol}{\mathrm{vol}}
\newcommand{\tr}{\mathrm{tr}}
\newcommand{\fu}{\mathfrak{u}}
\newcommand{\gl}{\mathfrak{gl}}
\newcommand{\rk}[1]{\mathrm{rk}\, #1}
\newcommand{\Ad}[1]{\mathrm{Ad}_{#1}\,}
\newcommand{\ov}[1]{\overline{#1}}
\newcommand{\Adet}[1]{\mathrm{Ad}^*_{#1}\,}
\newcommand{\fibre}{\mu^{-1}(\{0\})}
\newtheorem{theorem}{Theorem}[section]
\newtheorem{proposition}[theorem]{Proposition}
\newtheorem{lemma}[theorem]{Lemma}
\newtheorem{corollary}[theorem]{Corollary}
\theoremstyle{definition}
\newtheorem{definition}[theorem]{Definition}
\newtheorem{example}[theorem]{Example}
\newtheorem{examples}{Examples}
\newtheoremstyle{exercise}{\topsep}{5pt}%
     {}%         Body font
     {}%         Indent amount (empty = no indent, \parindent = para indent)
     {\scshape}% Thm head font
     {.}%        Punctuation after thm head
     {\topsep}%     Space after thm head (\newline = linebreak)
     {\thmname{#1}\thmnumber{\,#2}\thmnote{\,(#3)}}%         Thm head spec
\numberwithin{equation}{section}
\theoremstyle{exercise}
\newtheorem{exercise}{Exercise}[section]
\begin{document}

\begin{abstract}
These notes are based on a series of five lectures given at the 2009 \textit{Villa de Leyva Summer School on Geometric and Topological Methods for Quantum Field Theory}. The purpose of the lectures was to give an introduction to differential-geometric methods in the study of holomorphic vector bundles on a compact connected Riemann surface, as examplified in the celebrated paper of Atiyah and Bott (\cite{AB}). In these notes, we take a rather informal point of view and try to paint a global picture of the various notions that come into play in that study, setting Donaldson's theorem on stable holomorphic vector bundles (\cite{Don_NS}) as a goal for the lectures.
\end{abstract}

\maketitle

\tableofcontents

\section{Holomorphic vector bundles on Riemann surfaces}\label{Hol_VB}

\subsection{Vector bundles}

\subsubsection{Definition}

We begin by recalling the definition of a vector bundle. Standard references for the general theory of fibre bundles are the books of Steenrod (\cite{Steenrod}) and Husemoller (\cite{Husemoller}).

\begin{definition}[Vector bundle]\index{Vector bundle}\label{def:vector_bundle}
Let $X$ be a topological space and let $\K$ be the field $\R$ or $\C$. A \textbf{topological} $\K$\textbf{-vector bundle on} $X$ is a continuous map $p:E\lra X$ such that 

\begin{enumerate}

\item $\forall x \in X$, the fibre $E_x:= p^{-1}(\{x\})$ is a finite-dimensional $\K$-vector space,

\item $\forall x \in X$, there exists an open neighbourhood $U$ of $x$ in $X$, an integer $r_U \geq 0$, and a homeomorphism $\phi_U$ such that the diagramme

$$
\xymatrix{
p^{-1}(U) \ar[rr]^{\simeq}_{\phi_U} \ar[dr]_p & & U \times \K^{r_U} \ar[dl]^{\mathrm{pr}_U} \\
& U 
}
$$

\noindent is commutative ($\mathrm{pr}_U$ denotes the projection onto $U$),

\item the induced homeomorphism $\phi_x: p^{-1}(\{x\}) \lra \{x\} \times \K^{r_U}$ is a $\K$-linear isomorphism.

\end{enumerate}

\end{definition}

Most of the times, the map $p$ is understood, and we simply denote $E$ a vector bundle on $X$. If $\K=\R$, $E$ is called a (topological) \textit{real} vector bundle, and if $\K=\C$, it is called a (topological) \textit{complex} vector bundle. $p^{-1}(\{U\})$ is also denoted $E|_U$ and $\phi_U$ is called a \textit{local trivialisation} of $E$ over $U$. $E|_U$ is itself a vector bundle (on $U$). The open set $U\subset X$ is said to be trivialising for $E$, and the pair $(U,\phi_U)$ is called a (bundle) \textit{chart}.

\begin{examples}\label{examples_of_vb} The following maps are examples of vector bundles.

\begin{enumerate}

\item The \textit{product bundle} $p:X \times \K^r \lra X$, where $p$ is the projection onto $X$.

\item The tangent bundle $TM \lra M$ to a differentiable manifold $M$.

\item The M\"obius bundle on $S^1$: let $\cM$ be the quotient of $[0;1] \times \R$ under the identifications $(0,t) \sim (1,-t)$, with projection map $p: \cM \lra S^1$ induced by the canonical projection $[0;1]\times \R \lra [0;1]$. Observe that $\cM$ is indeed homeomorphic to a M\"obius band without its boundary circle.

\item The canonical line bundle on the $n$-dimensional projective space $\RP^n$ (=the space of lines in $\R^{n+1}$): $$E_{\mathrm{can}} :=\{(\ell,v)\in \RP^n\times \R^{n+1}\ |\ v\in \ell\}$$ with projection map $p(\ell,v)=l$. The fibre of $p$ above $\ell$ is canonically identified with $\ell$. When $n=1$, the bundle $E_{\mathrm{can}}$ will be shown later to be isomorphic to the M\"obius bundle (Exercise \ref{Mobius}). The same example works with $\CP^n$ in place of $\RP^n$.

\item The Grassmannian of $k$-dimensional complex sub-spaces of $\C^{n+1}$, denoted $\Gr_k(\C^{n+1})$, has a complex vector bundle structure with $k$-dimensional fibres: $$E_{\mathrm{can}} = \{(F,v) \in \Gr_k(\C^{n+1}) \times \C^{n+1}\ |\ v \in F\}$$ with projection map $p(F,v) = F$. The fibre of $p$ above $F$ is canonically identified with $F$. The same example works with $\R^{n+1}$ in place of $\C^{n+1}$.

\end{enumerate}

\end{examples}

It follows from the definition of a vector bundle that the $\Z_+$-valued map $$x \lmt \rk{p^{-1}(\{x\})}$$ (called the \textit{rank function}) is a locally constant, integer-valued function on $X$ (i.e. an element of $\ceH^0(X;\underline{\Z})$). In particular, if $X$ is connected, it is a constant map, i.e. an integer. %From this point on, we shall assume that the rank function is constant on $X$.

\begin{definition}[Rank of a vector bundle]
Let $X$ be a connected topological space. The \textbf{rank} of a $\K$-vector bundle $p:E \lra X$ is the dimension of the $\K$-vector space $p^{-1}(\{x\})$, for any $x \in X$. It is denoted $\rk{E}$. A vector bundle of rank $1$ is called a \textbf{line bundle}.
\end{definition}

\begin{definition}[Homomorphism of vector bundles]
A \textbf{homomorphism}, or simply morphism, between two $\K$-vector bundles $p:E \lra X$ and $p': E' \lra X'$ is a pair $(u,f)$ of continuous maps $u:E \lra E'$ and $f:X \lra X'$ such that

\begin{enumerate}

\item the diagramme
$$\begin{CD}
E @>>u> E'\\
@VpVV  @Vp'VV \\
X @>>f> X'
\end{CD}
$$
\vskip5pt
\noindent is commutative,

\item for all $x\in X$, the map $$u_x: p^{-1}(\{x\}) \lra (p')^{-1}(\{f(x)\})$$ is $\K$-linear.

\end{enumerate}

\end{definition}

Topological vector bundles together with their homomorphisms form a category that we denote $\mathrm{Vect}^{\mathrm{top}}$. If $X$ is a fixed topological space, there is a category $\mathrm{Vect}^{\mathrm{top}}_X$ whose objects are topological vector bundles on $X$ and whose morphisms are defined as follows.

\begin{definition}[Homomorphisms of vector bundles on $X$]
Let $X$ be a fixed topological space and let $p:E \lra X$ and $p':E' \lra X$ be two $\K$-vector bundles on $X$. A \textbf{morphism of $\K$-vector bundles on $X$} is a continous map $u:E \lra E'$ such that 

\begin{enumerate}

\item the diagramme

$$
\xymatrix{
p^{-1}(U) \ar[rr]^{\simeq}_{\phi_U} \ar[dr]_p & & U \times \K^{r_U} \ar[dl]^{\mathrm{pr}_U} \\
& U 
}
$$

\noindent is commutative,

\item for all $x\in x$, the map $$u_x: p^{-1}(\{x\}) \lra (p')^{-1}(\{f(x)\})$$ is $\K$-linear.

\end{enumerate}

\end{definition}

As usual, an isomorphism is a homomorphism which admits an inverse homomorphism. A $\K$-vector bundle isomorphic to a product bundle is called a \textit{trivial bundle}.

When the base space $X$ is a smooth manifold, one defines smooth vector bundles on $X$ using smooth maps in place of continuous ones, and when $X$ is a complex analytic manifold manifold, one may accordingly define holomorphic vector bundles on $X$ (this last notion only makes sense,  of course, if the  field $\K$ in Definition \ref{def:vector_bundle} is assumed to be the field of complex numbers).

\subsubsection{Transition maps}

We shall henceforth assume that $X$ is connected. Let $E$ be a $\K$-vector bundle on $X$, and denote $r=\rk{E}$. If $(U,\phi_U)$ and $(V,\phi_V)$ are two overlapping charts in the sense that $U\cap V \neq \emptyset$, one gets a map

$$\phi_U \circ \phi_V^{-1} :
\begin{array}{ccc}
(U \cap V) \times \K^r & \lra & (U\cap V) \times \K^r \\
(x,v) & \lmt & (x, g_{UV}(x)\cdot v)
\end{array}
$$

\noindent where $g_{UV}: U\cap V \lra \Aut(\K^r) = \GL(r,\K)$. It satisfies, for any triple of open sets $(U,V,W)$,

\begin{equation}\label{cocycle_cond} 
g_{UV} g_{VW} = g_{UW}
\end{equation}

\noindent (the product on the left-hand side being the pointwise product of $\GL(r,\K)$-valued functions). Setting $U=V=W$, we obtain $(g_{UU})^2=g_{UU}$, so $$g_{UU}=I_r$$ (the constant map equal to $\mathrm{I}_r$). This in turn implies that $g_{UV}g_{VU} = g_{UU} = \mathrm{I}_r$, so $$g_{VU} = g_{UV}^{-1}$$ (the map taking $x$ to $(g_{UV}(x))^{-1} \in \GL(r,\K)$). The condition \eqref{cocycle_cond} is called a \textit{cocycle condition}. If $(U_i)_{i\in I}$ is a covering of $X$ by trivialising open sets, with associated local trivialisations $(\phi_i)_{i\in I}$, we get a family $$g_{ij}: U_i \cap U_j \lra \GL(r,\K)$$ of maps satisying condition \eqref{cocycle_cond}: the family $(g_{ij})_{(i,j)\in I\times I}$ is called a $\GL(r,\K)$-valued $1$\textit{-cocycle} subordinate to the open covering $(U_i)_{i\in I}$. It is completely determined by the transition maps $(\phi_i \circ \phi_j^{-1})_{(i,j)\in I\times I}$. Conversely, such a cocycle defines a topological $\K$-vector bundle of rank $r$

\begin{equation}\label{bundle_defined_by_gluing}
E:= \left(\bigcup_{i\in I}\ \{i\} \times U_i \times \K^r \right) \Big/ \sim\ ,
\end{equation}

\noindent where the equivalence relation $\sim$ identifies $(i,x,v)$ and $(j,y,w)$ if $y=x$ (in particular, $U_i\cap U_j \neq \emptyset$) and $w = g_{ij}(x) \cdot v$, the projection map $p:E \lra X$ being induced by the projections maps $\{i\}\times U_i \times \K^r \lra U_i \subset X$. In other words, a $\K$-vector bundle of rank $r$ on $X$ is a fibre bundle with typical fibre $\K^r$ and structure group $\GL(r,\K)$, acting on $\K^r$ by linear transformations (see for instance \cite{Steenrod}). Two vector bundles $E$ and $E'$ on $X$, represented by two cocycles $(g_{ij})_{(i,j)}$ and $(g'_{ij})_{(i,j)}$ subordinate to a same open covering $(U_i)_{i\in I}$ are isomorphic if and only if there exists a family $$u_i\, :\, U_i \lra \GL(r,\K)$$ of maps satisfying $$g'_{ij} = u_i g_{ij} u_j^{-1}.$$ Indeed, simply define $u_i$ in the following way $$U_i \times \K^r \underset{\phi_i^{-1}}{\lra} E|_{U_i} \underset{u}{\lra} E'|_{U_i} \underset{\phi_i'}{\lra} U_i\times \K^r \, ,$$ the map taking $(x,v)$ to $(x,u_i(x)\cdot v)$, and check that, for all $x\in U_i\cap U_j$, $u_i(x) = g'_{ij}(x) u_j(x) g_{ij}^{-1}(x)$. This defines an equivalence relation on the set of $\GL(r,\K)$-valued $1$-cocycles subordinate to a given open covering $\cU=(U_i)_{i\in I}$ of $X$. The set of equivalence classes for this relations is usually denoted $$\ceH^1_{\mathrm{top}}(\cU;\GL(r,\K)).$$ These sets form a direct system relative to the operation of passing from an open covering of $X$ to a finer one, and the direct limit is denoted $$\ceH^1_{\mathrm{top}}(X;\GL(r,\K)):= \varinjlim_{\cU} \ceH^1_{\mathrm{top}}(\cU;\GL(r,\K))$$ (see for instance \cite{Gunning_RS}). \textit{This set is the set of isomorphism classes of topological $\K$-vector bundles on $X$} (if $X$ is not connected, $\ceH^1(X;\GL(r,\K))$ is the disjoint union  $ \sqcup_{i=1}^k \ceH^1(X_i\, ; \GL(r,\K))$, where $\sqcup_{i=1}^k X_i$ is the disjoint union of connected components of $X$). If one considers smooth $1$-cocycles instead of continuous ones, one obtains a similar description of smooth vector bundles on $X$. Likewise, if $X$ is a Riemann surface, a holomorphic vector bundle of rank $r$ on $X$ is represented by a holomorphic $1$-cocycle $$g_{ij}: U_i \cap U_j \lra \GL(r,\C)$$ in the sense that all the components of $g_{ij}$ are holomorphic functions of one variable. An automorphism of a topological (resp. smooth, resp. holomorphic) vector bundle $E$ on $X$ represented by the cocycle $(g_{ij})_{(i,j)}$ may be represented by a family $(u_i\, :\, U_i \lra \GL(r,\K))_i$ of continuous (resp. smooth, resp. holomorphic) maps satisfying $u_i g_{ij} = g_{ij} u_j$ for all $(i,j)\in I\times I$ such that $U_i \cap U_j \neq \emptyset$. 

\subsubsection{Sections of a bundle}

Sections of a bundle are a generalisation of mappings between two spaces $X$ and $Y$ in the sense that a map from $X$ to $Y$ is a section of the product bundle $X\times Y\lra X$.

\begin{definition}[Sections of a vector bundle]
A (global) \textbf{section} of a topological $\K$-vector bundle $p:E\lra X$ is a continuous map $s:X\lra E$ such that $p\circ s=\Id_X$. The set $\Ga(E)$ of global sections of $E$ is an infinite-dimensional $\K$-vector space and a module over the ring of $\K$-valued functions on $X$.
\end{definition}

\noindent \textit{Local} sections of $E$ are sections $s_U:U\lra E|_U\simeq U\times \K^r$ of the vector bundle $E|_U$ where $U\subset X$ is an open subset. They may be seen as maps from $U$ to $\K^r$. If $(g_{UV})_{(U,V)}$ is a $1$-cocycle representing the vector bundle $E$, then a global section $s:X\lra E$ is the same as a collection $(s_U)_U$ of local sections subject to the condition $$s_U = g_{UV} s_V$$ for any pair $(U,V)$ of open subsets of $X$ satisfying $U\cap V\neq \emptyset$. Smooth (resp. holomorphic) sections of a smooth (resp. holomorphic) vector bundles are defined accordingly.

\begin{example}
A section of the vector bundle $TM\lra M$ is a \textit{vector field} on $M$. A section of $T^*M\lra M$ is a \textit{differential $1$-form} on $M$.
\end{example}

\subsection{Topological classification}

Evidently, if two $\K$-vector bundles on $X$ are isomorphic, they have the same rank (=dimension over $\K$ of the typical fibre of $E$). If $\K=\C$ and $X=\Si_g$ is a compact connected oriented \textit{surface}, isomorphism classes of \textit{topological} or \textit{smooth} vector bundles on $X$ are completely classified by a pair $(r,d)$ of integers, namely the rank and the \textit{degree} of a complex vector bundle. We give the following definition, which makes a free use of the notion of \textit{Chern class} of a complex vector bundle (see for instance \cite{Bott-Tu} or \cite{Hatcher}).

\begin{definition}[Degree]\label{def_of_degree}
Let $E$ be a complex vector bundle on a compact connected oriented surface $\Si_g$. The \textit{degree} of $E$ is by definition the integral of the first Chern class $c_1(E)\in H^2(\Si_g;\Z)$ of $E$: $$deg(E) := \int_{\Si_g} c_1(E) \in\Z.$$
\end{definition}

\noindent The degree of a vector bundle satisfies the following relations, which are often useful in computations: $$deg(E^*) = - deg(E)\quad \mathrm{and}\quad deg(E_1\otimes E_2) = deg(E_1)\,\rk(E_2) + \rk(E_1)\, deg(E_2).$$

\begin{theorem}[Topological classification of complex vector bundles on a curve]
Let $\Si_g$ be a compact connected Riemann surface, and let $E,E'$ be two topological (resp. smooth) complex vector bundles on $\Si_g$. Denote $r=\rk(E)$, $r'=\rk(E')$, $d=deg(E)$ and $d'=deg(E')$. Then $E$ and $E'$ are isomorphic as topological (resp. smooth) complex vector bundles on $\Si_g$ if and only if $r=r'$ and $d=d'$. Moreover, for any pair $(r,d)\in\Z_+\times \Z$, there exists a complex vector bundle of rank $r$ and degree $d$ on $\Si_g$.
\end{theorem}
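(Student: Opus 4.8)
The plan is to establish the two implications separately. First I would dispose of the forward implication, which is essentially formal: an isomorphism $u\colon E\to E'$ of topological (or smooth) complex vector bundles restricts to linear isomorphisms on fibres, so $r=r'$; and, the first Chern class being a natural isomorphism invariant of topological complex vector bundles, $u$ carries $c_1(E)$ to $c_1(E')$ in $H^2(\Si_g;\Z)$, whence $d=\int_{\Si_g}c_1(E)=\int_{\Si_g}c_1(E')=d'$. The substance of the theorem — the reverse implication and the existence clause — is where the hypothesis $\dim_{\R}\Si_g=2$ does the real work.

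For existence, it suffices to produce for each $d\in\Z$ a line bundle $L_d$ of degree $d$: then $L_d\oplus\underline{\C}^{\,r-1}$ has rank $r$ and, the degree being additive in direct sums while the trivial bundle has degree $0$, has degree $d$. One may take $L_d=\O(d\cdot p)$, the holomorphic line bundle associated with the divisor $d\cdot p$ for some $p\in\Si_g$, whose degree is $d$ by definition. Purely topologically, write $\Si_g=(\Si_g\setminus\{p\})\cup D$ with $D$ a closed disc around $p$; every complex line bundle is trivial over each of the two pieces ($D$ is contractible, and $\Si_g\setminus\{p\}$ deformation retracts onto a wedge of $2g$ circles, a space whose $H^2$ vanishes), so it is obtained by clutching two trivial bundles along the overlap annulus $\simeq S^1$ by a map $S^1\to\U(1)$, i.e.\ by an element of $\pi_1(\U(1))\cong\Z$; the map $z\mapsto z^d$ gives a bundle $L_d$ with $\deg{L_d}=d$, as one sees by comparison with the identical construction on $\CP^1=S^2$, of which $L_d$ is the pullback under the degree-one collapse $\Si_g\to\Si_g/(\Si_g\setminus\mathring{D})\cong S^2$.

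For the reverse implication, assume $r=r'$ and $d=d'$. The crucial reduction I would use is that every complex vector bundle $E$ of rank $r\geq2$ on $\Si_g$ admits a nowhere-vanishing (continuous, resp.\ smooth) section. In the smooth category this is immediate from transversality, the zero set of a generic section being a submanifold of $\Si_g$ of codimension $2r\geq4>2=\dim_{\R}\Si_g$, hence empty; in general it follows from obstruction theory, the fibre $\C^{r}\setminus\{0\}\simeq S^{2r-1}$ being $(2r-2)$-connected with $2r-2\geq2$. Such a section spans a trivial line subbundle, so, fixing a Hermitian metric and passing to orthogonal complements, $E\cong\underline{\C}\oplus E_1$ with $\rk{E_1}=r-1$; iterating down to rank one gives $E\cong\underline{\C}^{\,r-1}\oplus L$ for a line bundle $L$, and taking determinants identifies $L$ with $\det E$. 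Hence $E\cong\underline{\C}^{\,r-1}\oplus\det E$, and likewise $E'\cong\underline{\C}^{\,r'-1}\oplus\det E'$. It then remains to classify line bundles, which I would do via the exponential sheaf sequence $0\to\Z\to\cC\to\cC^{*}\to0$ — where $\cC$ is the fine sheaf of continuous, resp.\ smooth, $\C$-valued functions on $\Si_g$, whose higher cohomology vanishes — yielding $\ceH^1(\Si_g;\cC^{*})\cong H^2(\Si_g;\Z)\cong\Z$, the isomorphism sending a line bundle to $\int_{\Si_g}c_1$, i.e.\ to its degree. Since $\ceH^1(\Si_g;\cC^{*})$ is the set of isomorphism classes of complex line bundles on $\Si_g$, two of them are isomorphic exactly when their degrees agree; hence $\det E\cong\det E'$ (both of degree $d$), and therefore $E\cong\underline{\C}^{\,r-1}\oplus\det E\cong\underline{\C}^{\,r'-1}\oplus\det E'\cong E'$.

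The step I expect to be the main obstacle is this reduction — peeling off trivial line subbundles until a single line bundle remains — because it is precisely where the two-dimensionality of $\Si_g$ is used, and it is what collapses the classification to the standard exponential-sequence computation for line bundles. As a cross-check, the whole theorem can also be read off from the bijections $[\Si_g,\mathrm{B}\U(r)]\cong[\Si_g,\mathrm{B}\U(1)]\cong H^2(\Si_g;\Z)\cong\Z$: the map induced by the determinant $\U(r)\to\U(1)$ is a bijection on homotopy classes of classifying maps because its homotopy fibre $\mathrm{B}\,\mathbf{SU}(r)$ is $3$-connected while $\Si_g$ is a $2$-dimensional CW complex.
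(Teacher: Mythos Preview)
Your argument is correct, but there is nothing to compare it against: the paper does not actually prove this theorem, it simply refers the reader to \cite{Thaddeus}. For what it is worth, your reduction $E\cong\underline{\C}^{\,r-1}\oplus\det E$ by successively splitting off trivial line summands (obstruction theory or transversality on a $2$-complex with $(2r-2)$-connected fibre $S^{2r-1}$) is the standard route, and the line-bundle step you then invoke --- the exponential sequence for the fine sheaf $\cC_{\Si_g}$ giving $\ceH^1(\Si_g;\cC_{\Si_g}^{\,*})\cong H^2(\Si_g;\Z)$ --- is precisely the argument the paper itself supplies later, around Lemma~\ref{top_classif_of_line_bundles}, for the rank-one case. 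Your homotopy-theoretic cross-check via $[\Si_g,\mathrm{B}\U(r)]\to[\Si_g,\mathrm{B}\U(1)]$ with $3$-connected homotopy fibre $\mathrm{B}\,\mathbf{SU}(r)$ is also valid and is essentially a repackaging of the same obstruction count.
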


\noindent We refer for instance to \cite{Thaddeus} for a proof of this theorem. In Section \ref{stable_bundles}, we shall be interested in the (much more involved) classification problem for \textit{holomorphic} vector bundles on a compact connected Riemann surface $\Si_g$, and the preceding result will be used to reduce it to the study of \textit{holomorphic structures} on a given smooth complex vector bundle of \textit{topological type} $(r,d)$.

\subsection{Dolbeault operators and the space of holomorphic structures}\label{Dolbeault_op}

In this subsection, we only consider smooth complex vector bundles over a fixed Riemann surface $\Si$ (although, most of the time, a similar, albeit slightly more complicated, statement holds for complex vector bundles over a higher-dimensional complex analytic manifold, see for instance \cite{Kobayashi} or \cite{Wells} for more on this topic).

\subsubsection{Smooth complex vector bundles and their sections}

A holomorphic structure on a topological complex vector bundle is by definition a (maximal) holomorphic atlas on it (local trivialisations with holomorphic transition maps). Such a holomorphic structure defines (up to conjugation by an automorphism of the bundle) a remarkable object on the underlying \textit{smooth} complex vector bundle: a Dolbeault operator.

In what follows, we will denote $E$ a smooth complex vector bundle on $\Si$. When $E$ is endowed with a holomorphic structure, we will designate by $\cE$ the resulting holomorphic vector bundle. We denote $\Om^0(\Si;E)= \Ga(E)$ the complex vector space of smooth sections of $E$, and $\Om^k(\Si;E)$ the complex vector space of $E$-valued, smooth, $\R$-linear $k$-forms on $\Si$. For any $k\geq 0$, $\Om^k(\Si;E)$ is also a module over the ring $\Om^0(\Si;\C)=C^\infty(\Si;\C)$ of $\C$-valued smooth functions on $\Si$. It is important to stress that, for $k\geq 1$, $\Om^k(\Si;E)$ is the space of smooth sections of the complex vector bundle $\wedge^k (T^*\Si) \otimes_{\R} E$. It is a complex vector space because the fibres of $E$ are complex vector spaces, but a single element $\w\in\Om^k(\Si;E)$, when evaluated at a point $x\in\Si$, defines an \textit{$\R$-linear} map $$\w_x: T_x\Si \wedge \cdots \wedge T_x\Si \lra E_x.$$ In particular, we do not restrict our attention to $\C$-linear such maps. Instead, we write, for instance if $k=1$, $$\Om^1(\Si;E) = \Om^{1,0}(\Si;E) \oplus \Om^{0,1}(\Si;E)$$ where $\Om^{1,0}(\Si;E)$ is the complex vector space of $\C$-linear $1$-forms $$\w: T\Si \lra E$$ and $\Om^{0,1}(\Si;E)$ is the complex vector space of $\C$-antilinear such forms. We recall that $\Om^k(\Si;\C)=\Om^k(\Si;E)$ for $E=\Si\times \C$, so the remark above is in particular valid for $\Om^1(\Si;\C)$. For $k > 1$ (and over a complex analytic manifold $M$ of arbitrary dimension), we would have a decomposition $$\Om^k(M;E) = \Om^{k,0}(M;E) \oplus \Om^{k-1,1}(M;E) \oplus \cdots \oplus \Om^{1,k-1}(M;E) \oplus \Om^{0,k}(M;E)$$ where $\Om^{p,q}(M;E)$ is the space of $\R$-linear $(p+q)$-forms $$\w: TM \wedge \cdots \wedge TM \lra E$$ which are $\C$-linear in $p$ arguments and $\C$-antilinear in $q$ arguments. A consequence of these decompositions is that the de Rham operator $d$ on $\Si$ splits into $$d= d^{1,0} \oplus d^{0,1}: \Om^0(\Si;\C) \lra \Om^1(\Si;\C)= \Om^{1,0}(\Si;\C)  \oplus \Om^{0,1}(\Si;\C).$$ That is, the exterior derivative $df$ of a smooth function $f:\Si \lra \C$ splits into a $\C$-linear part $d^{1,0}f$ (also denoted $\del{f}$) and a $\C$-antilinear part $d^{0,1}f$ (also denoted $\delb{f}$).

\begin{lemma}\label{Cauchy-Riemann_op}
A smooth function $f:\Si \lra \C$ is holomorphic if and only if $\delb{f}=0$.
\end{lemma}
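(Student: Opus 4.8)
The plan is to unwind both sides of the claimed equivalence into local coordinates and recognize the classical Cauchy–Riemann equations. Since holomorphy of $f:\Si\lra\C$ and vanishing of $\delb f$ are both local conditions, I would fix a holomorphic coordinate $z = x + iy$ on an open set $U\subset\Si$ and work there throughout. On such a chart, $\Om^{1,0}(U;\C)$ is freely generated over $C^\infty(U;\C)$ by $dz = dx + i\,dy$ and $\Om^{0,1}(U;\C)$ by $d\bar z = dx - i\,dy$, so the decomposition $d = d^{1,0}\oplus d^{0,1}$ applied to $f$ reads $df = \del f + \delb f$ with $\del f = \frac{\del f}{\del z}\,dz$ and $\delb f = \frac{\del f}{\del \bar z}\,d\bar z$, where $\frac{\del}{\del z} = \frac12\big(\frac{\del}{\del x} - i\frac{\del}{\del y}\big)$ and $\frac{\del}{\del \bar z} = \frac12\big(\frac{\del}{\del x} + i\frac{\del}{\del y}\big)$.

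The first step is to verify that this splitting of $df$ into its $\C$-linear and $\C$-antilinear parts is precisely the one described before the lemma: $dx$ and $dy$ are $\R$-linear but not $\C$-linear as maps $T\Si\to\C$, whereas a short computation shows $dz\circ J = i\,dz$ and $d\bar z\circ J = -i\,d\bar z$, where $J$ is the almost complex structure (multiplication by $i$ on $T\Si$ coming from the Riemann surface structure). Hence $dz$ spans the $\C$-linear part and $d\bar z$ the $\C$-antilinear part, and rewriting $df = \frac{\del f}{\del x}dx + \frac{\del f}{\del y}dy$ in terms of $dz,d\bar z$ identifies the coefficients as $\frac{\del f}{\del z}$ and $\frac{\del f}{\del\bar z}$ above. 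Therefore $\delb f = 0$ on $U$ if and only if $\frac{\del f}{\del\bar z} = 0$ on $U$, i.e. $\frac{\del f}{\del x} + i\frac{\del f}{\del y} = 0$.

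The second step is to recall that, writing $f = u + iv$ with $u,v$ real-valued, the equation $\frac{\del f}{\del x} + i\frac{\del f}{\del y} = 0$ separates into $u_x = v_y$ and $u_y = -v_x$, which are the Cauchy–Riemann equations; and a smooth function of one complex variable is holomorphic on $U$ exactly when it satisfies these equations there. Finally, one passes from local to global: $f$ is holomorphic on $\Si$ iff it is holomorphic in every holomorphic chart, and $\delb f = 0$ on $\Si$ iff $\delb f$ vanishes on every such chart (the operator $\delb$ is well defined globally precisely because the transition maps of a Riemann surface are holomorphic, so they intertwine the local $\delb$'s); combining the local equivalences over a holomorphic atlas gives the result.

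The only genuinely substantive point — and the step I would be most careful about — is the compatibility verification in the first step: namely that the abstract decomposition $\Om^1 = \Om^{1,0}\oplus\Om^{0,1}$ by $\C$-linearity versus $\C$-antilinearity, which is defined using the complex structure $J$ on $T\Si$, really does coincide in a holomorphic coordinate with the concrete span of $dz$ versus $d\bar z$. Everything else (the algebra converting $\frac{\del}{\del x},\frac{\del}{\del y}$ into $\frac{\del}{\del z},\frac{\del}{\del\bar z}$, and the identification of the Cauchy–Riemann equations) is routine. Since the paper adopts an informal tone, it would be reasonable to state the coordinate computation and the Cauchy–Riemann identification briefly, and simply remark that $\delb$ is globally well defined because coordinate changes on a Riemann surface are biholomorphic.
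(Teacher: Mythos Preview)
Your proposal is correct and follows essentially the same route as the paper: reduce to a holomorphic chart $z=x+iy$, express $\delb f$ via the Wirtinger operator $\frac12(\partial_x+i\partial_y)$, split $f$ into real and imaginary parts, and recognize the Cauchy--Riemann equations. Your write-up is simply more explicit about why the abstract $(1,0)/(0,1)$ splitting matches the $dz/d\bar z$ splitting and about the local-to-global passage, points the paper leaves implicit.
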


\begin{proof}
It is a question of a purely local nature. In a holomorphic chart  $z=x+iy$ of $\Si$, one has $df= \del{f} dz +\delb{f}d\overline{z}$ and $\delb{f} = \frac{1}{2}(\frac{\del{f}}{\del{x}} + i \frac{\del{f}}{\del{y}})$. Write now $f=P+iQ$ with $P$ and $Q$ real-valued. Then $\delb{f}=0$ if and only if $\frac{\del{P}}{\del{x}} = \frac{\del{Q}}{\del{y}}$ and $\frac{\del{Q}}{\del{x}} = - \frac{\del{P}}{\del{y}}$. These are the Cauchy-Riemann equations: they mean that the Jacobian matrix of $f$ at any given point is a similitude matrix, i.e. a complex number, which amounts to saying that $f$ is holomorphic.
\end{proof}
 
\begin{definition}[Cauchy-Riemann operator]
The operator $$\delb:\Om^0(\Si,\C) \lra \Om^{0,1}(\Si;\C)$$ taking a smooth function to the $\C$-antilinear part of its derivative is called the \textbf{Cauchy-Riemann operator} of the Riemann surface $\Si$.
\end{definition} 

\noindent Observe that the Cauchy-Riemann operator satisfies the Leibniz rule $$\delb(fg) = (\delb{f})g + f(\delb{g}).$$
We would like to have a similar characterisation for the holomorphic \textit{sections} of an arbitrary holomorphic bundle $\cE$ (not just $\Si\times\C$). The problem is that there is no canonically defined operator $$D:\Om^0(\Si;E) \lra \Om^1(\Si;E)$$ which would play the role of the de Rham operator, so we first need to define these. Recall that the de Rham operator satisfies the Leibniz rule $$d(fg) = (df)g + f(dg).$$ The next object, called a (linear) connection, gives a way to differentiate sections of a vector bundle $E$ \textit{covariantly} (in such a way that the resulting object is an $E$-valued $1$-form, thus generalising the de Rham operator). To give a presentation of connections of a broader interest, we temporarily move back to manifolds more general than Riemann surfaces. The next definition is a bit abstract but designed to incorporate the case, for instance, of smooth complex vector bundles over real smooth manifolds. 

\begin{definition}[Linear connection]
Let $M$ be a smooth manifold over $\K=\R$ or $\C$. A (linear) \textbf{connection} on a smooth vector bundle $E\lra M$ is a $\K$-linear map $$D: \Om^0(M;E) \lra \Om^1(M;E)$$ satisfying the following Leibniz rule $$D(fs) = (df)s + f(Ds)$$ for all $f\in C^\infty(M;\K)$ and all $s\in \Om^0(M;E)$, where $d$ is the de Rham operator on $M$.
\end{definition}

\noindent $(df)s$ is the element of $\Om^1(M;E)$ which, when evaluated at $x\in M$, is the $\R$-linear (\textit{even, and this is crucial, if} $\K=\C$) map

$$
\begin{array}{ccc}
T_x M  & \lra & E_x \\
v & \lmt & \underbrace{(df)_x(v)}_{\in \K} \cdot \underbrace{s(x)}_{\in E_x}\ .
\end{array}
$$

\noindent $fDs$ is the element of $\Om^1(M;E)$ which, when evaluated at $x\in M$, is the $\R$-linear map $f(x)(Ds)_x$. Given a section $s\in\Om^0(M;E)$, the $E$-valued $1$-form $Ds$ is called the \textit{covariant derivative} of $s$. We observe that the product bundle $U\times \K^r$ always admits a distinguished connection, called the product connection: since elements of $\Om^1(U;U\times\K^r) \simeq \Om^1(U;\K^r)$ are $r$-uplets of $k$-forms on $U$, one may define $D=d\oplus \cdots \oplus d$, the de Rham operator repeated $r$ times. One can moreover show that any convex combination of linear connections is a linear connection. It is then a simple consequence of the existence of partitions of unity that any smooth vector bundle admits a connection (see Exercise \ref{convex_comb_of_conn}). As we now show, the space of all connections is an affine space.

\begin{proposition}
The space of all linear connections on a smooth $\K$-vector bundle is an affine space, whose group of translations is the vector space $\Om^1(\Si;\End(E))$.
\end{proposition}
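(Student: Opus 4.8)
The plan is to show two things: first, that the difference of two linear connections is $C^\infty(M;\K)$-linear, hence given by an $\End(E)$-valued $1$-form; and second, that adding such a form to a connection yields another connection. Together these establish that the space of connections, which is nonempty by the remark preceding the statement, is a torsor under $\Om^1(\Si;\End(E))$.

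For the first step I would take two linear connections $D_1, D_2$ on $E$ and set $A := D_1 - D_2 : \Om^0(\Si;E) \to \Om^1(\Si;E)$. This is visibly $\K$-linear. Applying the Leibniz rule to each of $D_1, D_2$ and subtracting, the inhomogeneous terms $(df)s$ cancel, leaving $A(fs) = f\,A(s)$ for every $f \in C^\infty(\Si;\K)$ and $s \in \Om^0(\Si;E)$. A $C^\infty$-linear map from sections of $E$ to sections of $\wedge^1(T^*\Si)\otimes_\R E$ is, by the standard tensoriality principle, induced by a bundle homomorphism $E \to \wedge^1(T^*\Si)\otimes_\R E$, i.e. by a smooth section of $\Hom(E, \wedge^1(T^*\Si)\otimes_\R E) \simeq \wedge^1(T^*\Si)\otimes_\R \End(E)$; that is, $A \in \Om^1(\Si;\End(E))$. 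Concretely, $(As)_x$ depends only on $s(x)$, so one gets for each $x$ a linear map $E_x \to (\wedge^1 T_x^*\Si)\otimes E_x$, assembling into the desired $1$-form with values in $\End(E)$. Conversely, for the second step, if $D$ is a connection and $A \in \Om^1(\Si;\End(E))$, one checks directly that $D + A$ is $\K$-linear and satisfies $(D+A)(fs) = (df)s + f(Ds) + fA(s) = (df)s + f(D+A)(s)$, since $A(fs) = fA(s)$ by $C^\infty$-linearity of the action of an $\End(E)$-valued form. Hence $D + A$ is again a connection.

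Finally I would note that the set of connections is nonempty (established just before the proposition via partitions of unity), that the operation $(D, A) \mapsto D + A$ is free (if $D + A = D$ then $A = 0$) and transitive (given $D_1, D_2$, the form $A = D_1 - D_2$ satisfies $D_2 + A = D_1$), and that it is compatible with the vector space structure of $\Om^1(\Si;\End(E))$ in the obvious way. This is exactly the definition of an affine space with group of translations $\Om^1(\Si;\End(E))$.

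The only genuinely non-formal point is the tensoriality argument in the first step — that a pointwise-$C^\infty(\Si;\K)$-linear operator between spaces of sections is given by a bundle map. I would prove this by the usual localization argument: $C^\infty$-linearity forces $(As)_x$ to vanish whenever $s$ vanishes in a neighbourhood of $x$ (multiply by a bump function equal to $1$ near $x$ and supported where $s=0$), and then a Taylor-type expansion $s = s(x_0)$-constant-section $+ \sum (\text{coordinate functions vanishing at }x_0)\cdot(\text{smooth sections})$ in a local frame shows $(As)_{x_0}$ depends only on $s(x_0)$. This is entirely standard, so I would state it and indicate the bump-function/local-frame mechanism without grinding through the coordinates. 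Everything else is a direct verification of the Leibniz rule.
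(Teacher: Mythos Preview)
Your proof is correct and follows essentially the same approach as the paper: compute $(D_1-D_2)(fs)$, observe that the Leibniz terms $(df)s$ cancel so the difference is $C^\infty$-linear, and identify such a map with an element of $\Om^1(\Si;\End(E))$. You are in fact more thorough than the paper, which stops at the $C^\infty$-linearity computation and simply asserts the identification with an $\End(E)$-valued $1$-form without spelling out the tensoriality argument or the converse direction.
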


\begin{proof}
It suffices to show that the difference $D_1-D_2$ of two linear connections defines an element of $\Om^1(M;E)$. One has, for all $f\in C^\infty(M;\K)$ and all $s\in\Om^0(M;E)$,

\begin{eqnarray*}
(D_1-D_2)(fs) & = & D_1(fs) - D_2(fs) \\
& = & (df)s + f(D_1 s) -(df)s - f(D_2 s) \\
& = & f(D_1-D_2)s.
\end{eqnarray*}

\noindent So $D_1-D_2$ is a $C^\infty(M;\K)$-linear map from $\Om^0(M;E)$ to $\Om^1(M;E)$. This is the same as an $\End(E)$-valued $1$-form on $M$.% (see Exercise \ref{End(E)-valued_forms}).
\end{proof}

We postpone the exposition of further generalities on linear connections (curvature and the like) to Subsection \ref{unitary_conn}, where they will be presented for a special case of linear connections called unitary connections (but seen to hold in greater generality), and we go back to generalised Cauchy-Riemann generators. 

A linear connection on a smooth complex vector bundle $E\lra \Si$ over a Riemann surface splits into $$D= D^{1,0} \oplus D^{0,1}\, :\, \Om^0(\Si;E) \lra \Om^1(\Si;E) = \Om^{1,0}(\Si;E) \oplus \Om^{0,1}(\Si;E).$$

\begin{lemma}\label{(0,1)-part}
Let $D$ be a linear connection on $E\lra\Si $. The operator $$D^{0,1}: \Om^0(\Si;E) \lra \Om^{0,1}(\Si;E)$$ taking a section of $E$ to the $\C$-antilinear part of its covariant derivative is $\C$-linear and satisfies the following Leibniz rule $$D^{0,1}(fs) = (\delb{f})s + f (D^{0,1}s),$$ where $\delb$ is the Cauchy-Riemann operator on $\Si$.
\end{lemma}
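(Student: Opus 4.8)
The plan is to reduce everything to a direct verification using the definition of $D^{0,1}$ as the $\C$-antilinear part of $D$. First I would recall that $D$ is $\C$-linear as a map $\Om^0(\Si;E)\to\Om^1(\Si;E)$ (since it is $\K$-linear with $\K=\C$), and that the decomposition $\Om^1(\Si;E)=\Om^{1,0}(\Si;E)\oplus\Om^{0,1}(\Si;E)$ into $\C$-linear and $\C$-antilinear parts is a decomposition of complex vector spaces, so the projection onto the $\Om^{0,1}$-summand is itself $\C$-linear. Composing $D$ with this projection shows immediately that $D^{0,1}$ is $\C$-linear.

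For the Leibniz rule, I would start from the Leibniz rule satisfied by $D$ itself, namely $D(fs)=(df)s+f(Ds)$ for $f\in C^\infty(\Si;\C)$ and $s\in\Om^0(\Si;E)$, and apply the projection onto the $(0,1)$-part to both sides. The key point is that this projection is not merely $\C$-linear but $C^\infty(\Si;\C)$-linear, because multiplication by a fixed smooth function $f$ commutes with the decomposition of a $1$-form into its $\C$-linear and $\C$-antilinear parts (pointwise, multiplying an $\R$-linear map $T_x\Si\to E_x$ by the scalar $f(x)$ preserves both $\C$-linearity and $\C$-antilinearity in the argument). Hence the $(0,1)$-part of $f(Ds)$ is $f(D^{0,1}s)$, and the $(0,1)$-part of $(df)s$ is $(d^{0,1}f)s=(\delb f)s$ by the very definition of the Cauchy–Riemann operator and the fact that $(df)s$ at $x$ is the scalar $(df)_x(v)$ times the fixed vector $s(x)$, so its $\C$-antilinearity in $v$ is governed entirely by the $\C$-antilinear part of $df$. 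Assembling these two observations gives $D^{0,1}(fs)=(\delb f)s+f(D^{0,1}s)$.

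I do not expect any serious obstacle here; the only thing that requires a moment's care is making explicit \emph{why} the projection $\Om^1(\Si;E)\to\Om^{0,1}(\Si;E)$ is $C^\infty(\Si;\C)$-linear rather than merely $\R$-linear, i.e.\ that scaling by a complex-valued function does not mix the $(1,0)$ and $(0,1)$ components. This is the analogue, in the bundle-valued setting, of the splitting $df=\del f+\delb f$ for scalar functions, and it is purely pointwise. Once that is isolated as the crucial fact, both assertions of the lemma follow by applying a $C^\infty(\Si;\C)$-linear projection to identities already known for $D$ and $d$.
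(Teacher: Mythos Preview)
Your proposal is correct and follows essentially the same route as the paper: start from the Leibniz rule $D(fs)=(df)s+f(Ds)$, split each term into its $(1,0)$ and $(0,1)$ parts, and read off the $(0,1)$-component. The paper's proof is terser (it simply writes $D(fs)=(\del f)s+f(D^{1,0}s)+(\delb f)s+f(D^{0,1}s)$ and identifies the antilinear part), whereas you make explicit the reason this works, namely that the projection $\Om^1(\Si;E)\to\Om^{0,1}(\Si;E)$ is $C^\infty(\Si;\C)$-linear; this extra care is welcome but does not change the argument.
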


\begin{proof}
$D^{0,1}$ is obviously additive. Moreover,

\begin{eqnarray*}
D(fs) & = & (df) s + f(Ds) \\
& = & (\del{f} )s + f(D^{1,0}s) + (\delb{f})s + f(D^{0,1}s)
\end{eqnarray*}

\noindent so the $\C$-antilinear part of $D(fs)$ is $(\delb{f})s+f(D^{0,1}s)$.
\end{proof}

\noindent This motivates the following definition.

\begin{definition}[Dolbeault operator]
A Dolbeault operator on a smooth complex vector bundle $E\lra \Si$ over a Riemann surface is a $\C$-linear map $$D'':\Om^0(\Si;E) \lra \Om^{0,1}(\Si;E)$$ satisfying the following Leibniz rule: for all $f\in C^\infty(\Si;\C)$ and all $s\in \Om^0(\Si;E)$, $$D''(fs) = (\delb{f})s + f(D''s)$$ where $\delb$ is the Cauchy-Riemann operator of $\Si$.
\end{definition}

A Dolbeault operator is also called a $(0,1)$-connection. As in the case of connections, any smooth complex vector bundle over a complex base space admits a Dolbeault operator, and the space $\Dol(E)$ of all Dolbeault operators on $E$ is an affine space, whose group of translations is the vector space $\Om^{0,1}(\Si;\End(E))$. We now show that, given a \textit{holomorphic} vector bundle $\cE$ on $\Si$, there is a Dolbeault operator on the underlying smooth vector bundle $E$, whose kernel consists exactly of the holomorphic sections of $\cE$ (much like $\ker\delb$ consists of the holomorphic functions on $\Si$, see Lemma \ref{Cauchy-Riemann_op}). We first observe that $\cG_E$, the group\begin{footnote}{One may observe that there is a group bundle $\GL(E)$ on $\Si$, whose typical fibre is $\GL(r;\C)$ and whose structure group is $\Ad{\GL(r;\C)}$, such that $\cG_E=\Ga(\GL(E))$.}\end{footnote} of all complex linear bundle automorphisms of $E$, acts on $\Dol(E)$ in the following way: if $u\in \cG_E$ and $D''\in\Dol(E)$, 

\begin{equation}\label{action_on_Dolbeault_op}
(u\cdot D'')(s) := u\big( D''(u^{-1}s)\big)
\end{equation}

\noindent is a Dolbeault operator on $E$ (Exercise \ref{action_on_Dol(E)}), and, if $v$ is another automorphism of $E$, $$(uv) \cdot D'' = u \cdot (v\cdot D'')$$ so we indeed have a group action. Moreover, $D''$ is a local operator in the following sense: if $s_U$ is a \textit{local} smooth section of $E$, we can define a local $E$-valued $1$-form $D''s_U$ using bump functions on $U$ (see the proof below). In particular, local solutions to the equation $D''s=0$ form a sheaf on $\Si$.

\begin{proposition}
Let $E$ be a smooth complex vector bundle on $\Si$. Given a holomorphic structure on $E$, denote $\cE$ the resulting holomorphic vector bundle. Then, there exists a unique $\cG_E$-orbit of Dolbeault operators on $E$ such that, for any $D''$ in that orbit, local holomorphic sections of $\cE$ are in bijection with local solutions to the equation $D''s=0$.
\end{proposition}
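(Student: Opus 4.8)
The plan is to construct an explicit Dolbeault operator from the holomorphic structure, check that its kernel is the sheaf of holomorphic sections, and then verify that any other operator with the same property differs from it by the action of an element of $\cG_E$. First I would use the holomorphic atlas: let $(U_i,\phi_i)_{i\in I}$ be trivialising open sets with \emph{holomorphic} transition maps $g_{ij}: U_i\cap U_j \lra \GL(r,\C)$. Over each $U_i$, the trivialisation $\phi_i$ identifies $E|_{U_i}$ with $U_i\times\C^r$, and on the product bundle there is the tautological Dolbeault operator $\delb^{(r)}=\delb\oplus\cdots\oplus\delb$ acting componentwise. I would define $D''$ locally by $D''s := \phi_i^{-1}\big(\delb^{(r)}(\phi_i\circ s)\big)$ on $U_i$ and check that these local definitions agree on overlaps: on $U_i\cap U_j$ the two prescriptions differ by $\delb^{(r)}$ applied to $g_{ij}\cdot(\phi_j\circ s)$ versus $g_{ij}\cdot\delb^{(r)}(\phi_j\circ s)$, and since $g_{ij}$ is holomorphic, the Leibniz rule for $\delb$ (noted after the definition of the Cauchy--Riemann operator, extended entrywise) gives $\delb^{(r)}(g_{ij}v)=g_{ij}\,\delb^{(r)}(v)$, so the two agree. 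This yields a globally well-defined operator, and the Leibniz rule for $\delb$ immediately gives the Leibniz rule $D''(fs)=(\delb f)s+f(D''s)$, so $D''$ is a genuine Dolbeault operator. The locality remark preceding the statement (bump functions) makes the local-to-global gluing legitimate.

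Next I would identify the kernel. A local section $s$ of $\cE$ over $U\subset U_i$ is holomorphic if and only if $\phi_i\circ s : U \lra \C^r$ has holomorphic components, which by Lemma \ref{Cauchy-Riemann_op} applied componentwise is equivalent to $\delb^{(r)}(\phi_i\circ s)=0$, i.e. to $D''s=0$. Hence local holomorphic sections of $\cE$ are exactly the local solutions of $D''s=0$, establishing existence of at least one operator in the desired class. For the uniqueness of the $\cG_E$-orbit, suppose $D''_1$ and $D''_2$ are two Dolbeault operators with the stated property. Since $\Dol(E)$ is an affine space over $\Om^{0,1}(\Si;\End E)$, I could try to argue directly, but the cleaner route is to work in trivialisations induced by the \emph{holomorphic} atlas and use the characterisation via kernels: near any point, both operators have the same local kernel, namely a rank-$r$ free module of holomorphic sections over the structure sheaf. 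Choosing a local holomorphic frame $(e_1^{(i)},\ldots,e_r^{(i)})$ for $\cE$ over $U_i$ (which is $D''$-flat for the first operator by construction), flatness of the same frame for the second operator up to $\cG_E$ is what must be shown.

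The main obstacle, and the technical heart of the argument, is precisely this uniqueness step: showing that any Dolbeault operator whose local kernel is the sheaf of $\cE$-holomorphic sections lies in the $\cG_E$-orbit of the canonical one. The key point is that such an operator admits, locally, a frame of solutions to $D''s=0$: this is a consequence of the Newlander--Nirenberg-type / elliptic-regularity fact that in complex dimension one the equation $\delb u = \alpha$ is locally solvable for any smooth $(0,1)$-form $\alpha$ (equivalently, every Dolbeault operator on a line bundle, hence on $\C^r$ after triangulating, is locally gauge-equivalent to $\delb$). Granting that, if $D''_2$ has local solution frame $(f_1^{(i)},\ldots,f_r^{(i)})$ over $U_i$ while $D''_1$ has the holomorphic frame $(e_1^{(i)},\ldots,e_r^{(i)})$, then the change-of-frame matrices define local smooth automorphisms $u_i : U_i \lra \GL(r,\C)$; one checks on overlaps (using that \emph{both} frames transform by the holomorphic cocycle $g_{ij}$, since the solution sheaves glue to the same global object) that the $u_i$ patch together into a global $u\in\cG_E$ with $u\cdot D''_1 = D''_2$, where the action is \eqref{action_on_Dolbeault_op}. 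I would present the local solvability as a cited analytic input (referring to the Dolbeault lemma / $\delb$-Poincaré lemma in one variable, e.g. \cite{Wells} or \cite{Gunning_RS}) rather than proving it, since it is the one genuinely non-formal ingredient; everything else is bookkeeping with the Leibniz rule and the cocycle condition \eqref{cocycle_cond}.
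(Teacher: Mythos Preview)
Your construction of $D''$ and the identification of its kernel match the paper's argument essentially verbatim: patch the local operators $\delb^{(r)}$ using that the transition maps $g_{ij}$ are holomorphic (so $\delb(g_{ij}s_j)=g_{ij}\,\delb s_j$), then invoke Lemma~\ref{Cauchy-Riemann_op} componentwise to characterise the kernel. The paper also makes explicit the bump-function step you allude to.

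Where you diverge is in the uniqueness. The paper reads ``unique $\cG_E$-orbit'' as ``the construction yields a well-defined orbit'': two holomorphic atlases for the same (or isomorphic) holomorphic structure on $E$ produce conjugate Dolbeault operators, and this is dismissed in one sentence (``Evidently, two isomorphic holomorphic structures on $E$ determine conjugate Dolbeault operators''). You instead aim for the stronger assertion that \emph{any} Dolbeault operator whose local-solution sheaf coincides with the sheaf of $\cE$-holomorphic sections lies in that orbit. That is fine, but you then over-engineer it. You yourself write that $D_1''$ and $D_2''$ ``have the same local kernel, namely a rank-$r$ free module of holomorphic sections''; if so, a local holomorphic frame $(e_1,\dots,e_r)$ for $\cE$ is already a flat frame for \emph{both} operators, and the Leibniz rule forces
\[
D_1''\Big(\sum_j f_j e_j\Big)=\sum_j(\delb f_j)\,e_j=D_2''\Big(\sum_j f_j e_j\Big),
\]
so $D_1''=D_2''$ on the nose---no gauge transformation needed, and no analytic input needed. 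The local solvability of $\delb$ that you invoke is exactly the content of the Newlander--Nirenberg theorem stated immediately \emph{after} this proposition; it is what one needs for the converse direction (every Dolbeault operator admits a local frame of solutions, hence defines a holomorphic structure), not here, where the existence of such a frame is built into the hypothesis.
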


\begin{proof}
Let $(g_{ij})_{(i,j)}$ be a \textit{holomorphic} $1$-cocycle of transition maps on $E$. Let $s$ be a smooth global section of $E$, and denote $s_i$ the section $s$ read in the local chart $(U_i,\phi_i)$. Then $s_i= g_{ij} s_j$ as maps from $U_i \cap U_j \to \C^r$, so $$\delb{s_i} = \delb{(g_{ij}s_j)} = (\delb{g_{ij}})s_j + g_{ij}(\delb{s_j}) = g_{ij}(\delb{s_j}),$$ since $g_{ij}$ is holomorphic. This defines, for any $s\in\Om^0(\Si;E)$, a global, $E$-valued $(0,1)$-form $D''s$ on $\Si$ (such that $(D''s)_i = \delb{s_i}$). The Leibniz rule for the operator thus defined follows from the Leibniz rule for the the local operator $\delb$. Let us now identify the local solutions to the equation $D''s=0$. If $\si$ is a smooth local section of $E$ over an open subset $U$ of $\Si$, let $f$ be a smooth bump function whose support is contained in $U$. Then, by definition, there exists an open set $V\subset U$ on which $f$ is identically $1$. We may assume that $V$ is trivialising for $E$. Then $\sigma|_V:V\to\C^r$ is a smooth local section of $E$ over $V$, and it is holomorphic in $V$ if and only if $\delb{(\si|_V)}=0$, or equivalently, $(D''\si)_V=0$ (observe that this last equation makes sense because we can extend $\si$ \textit{smoothly} to $\Si$ using the bump function, and since $f\equiv 1$ in $V$, $(D''\si)_V$ does not depend on the extension). Using different pairs $(V,f)$, we see that $\si$ is holomorphic in $U$ if and only if it is a local solution to the equation $D''s=0$. Evidently, two isomorphic holomorphic structures on $E$ determine conjugate Dolbeault operators.
\end{proof}

\noindent So we have an injective map $$\{\textrm{holomorphic\ structures\ on}\ E\}\, /\,  \textrm{isomorphism} \lra \Dol(E) /\, \cG_E\, .$$ It is a remarkable fact that the image of this map can be entirely described, and that it is in fact a surjective map when the base complex analytic manifold has complex dimension $1$. The problem of determining whether a Dolbeault operator comes from a holomorphic structure is a typical \textit{integrability} question, similar to knowing whether a linear connection comes from a linear representation of the fundamental group of $M$ at a given basepoint. The integrability conditions, too, are very similar, and we refer to \cite{DK} (Chapter 2, Section 2) for an illuminating parallel discussion of the two questions, as well as the proof of the following integrability theorem.

\begin{theorem}[The Newlander-Nirenberg Theorem in complex dimension one]
Let $E\lra \Si$ be a smooth complex vector bundle on a Riemann surface, and let $D''$ be a Dolbeault operator on $E$. Then there exists a unique holomorphic structure on $E$ such that such that the local holomorphic sections of $\cE$ are in bijection with smooth local solutions to the equatio $D''=0$.
\end{theorem}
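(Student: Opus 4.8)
The plan is to reduce the statement to a purely local problem, exactly as in the proof of Lemma \ref{Cauchy-Riemann_op}, and then use an existence theorem for solutions of the Cauchy--Riemann equation with parameters. Concretely, fix an open covering $(U_i)_{i\in I}$ of $\Si$ by holomorphic coordinate charts which are also trivialising for $E$, and write $D''$ in each such chart as $D'' = \delb + A_i$, where $A_i\in\Om^{0,1}(U_i;\gl(r,\C))$ is the local connection form of the Dolbeault operator with respect to the chosen trivialisation (this makes sense since $\Dol(E)$ is an affine space over $\Om^{0,1}(\Si;\End(E))$ and the product trivialisation carries the reference operator $\delb$). A section $s$ of $E$, read as a $\C^r$-valued function $s_i$ on $U_i$, then satisfies $D''s=0$ if and only if $\delb s_i + A_i s_i = 0$. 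The goal is to manufacture, on a possibly smaller chart around each point, a $\GL(r,\C)$-valued smooth function $h_i$ such that $h_i^{-1}(\delb h_i) = -A_i$, equivalently $\delb h_i = -A_i h_i$; such an $h_i$ is a ``gauge transformation'' trivialising $D''$, and the new transition maps $g_{ij} := h_i^{-1} h_j$ (compared with the old smooth transition maps, suitably adjusted) will then be holomorphic because on overlaps both $h_i$ and $h_j$ conjugate $D''$ to $\delb$, forcing $\delb g_{ij}=0$, i.e. $g_{ij}$ holomorphic by Lemma \ref{Cauchy-Riemann_op} applied componentwise. These $g_{ij}$ define the desired holomorphic structure, and by construction its local holomorphic sections correspond to solutions of $D''s=0$.

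The key analytic input, and the step I expect to be the main obstacle, is the local solvability of $\delb h = -A h$ for an invertible $h$. The standard route is: first solve the scalar inhomogeneous Cauchy--Riemann equation $\delb f = \psi$ on a disc, which is classical via the Cauchy--Pompeiu integral formula $f(z) = \frac{1}{2\pi i}\int_{D} \frac{\psi(\zeta)}{\zeta - z}\, d\zeta\wedge d\overline{\zeta}$ (the Cauchy transform), giving a smooth solution on a slightly smaller disc; then bootstrap to the matrix equation. For the matrix (and nonlinear-looking, but actually linear in $h$) equation $\delb h = -Ah$, one shrinks the disc so that the relevant integral operator has small operator norm on an appropriate Hölder or $L^p$-Sobolev space, and solves $h = I_r - T(Ah)$ by the Banach fixed point theorem, where $T$ is the Cauchy transform applied entrywise; choosing the disc small enough also guarantees $h$ is close to $I_r$ in sup norm, hence invertible. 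This is precisely the argument laid out in \cite{DK}, Chapter 2, Section 2, which the excerpt already points to, so I would invoke it rather than reproduce it.

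Once local trivialising gauges $h_i$ exist on a refinement $(V_\alpha)$ of the covering, I would assemble the holomorphic structure: on $V_\alpha\cap V_\beta$ define $g_{\alpha\beta} := h_\alpha\, \tilde g_{\alpha\beta}\, h_\beta^{-1}$ where $\tilde g_{\alpha\beta}$ are the original smooth transition maps of $E$ in the chosen trivialisations (taking into account how $A_i$ transforms under $\tilde g_{\alpha\beta}$). The cocycle condition for $(g_{\alpha\beta})$ is inherited from that of $(\tilde g_{\alpha\beta})$, and the computation above shows $\delb g_{\alpha\beta}=0$, so $(g_{\alpha\beta})$ is a holomorphic $1$-cocycle defining a holomorphic vector bundle $\cE$ whose underlying smooth bundle is $E$. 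By construction the local gauge $h_\alpha$ identifies $D''|_{V_\alpha}$ with $\delb$ on $V_\alpha\times\C^r$, so $D''s=0$ locally translates, after applying $h_\alpha$, into $\delb(h_\alpha s_\alpha)=0$, i.e. $h_\alpha s_\alpha$ holomorphic, which is exactly the condition that $s$ be a holomorphic section of $\cE$; this proves the bijection claimed in the statement.

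Finally, for uniqueness, suppose two holomorphic structures $\cE_1,\cE_2$ on $E$ both have the property that their holomorphic sections are the local solutions of $D''s=0$. Then the two sheaves of local holomorphic sections coincide as subsheaves of the sheaf of smooth sections of $E$. On a small common trivialising disc, the columns of the identity matrix are, for each structure, holomorphic frames, and expressing the frame of $\cE_1$ in terms of that of $\cE_2$ gives a transition matrix between the two holomorphic structures whose entries are holomorphic (being local holomorphic sections read through holomorphic frames) with holomorphic inverse; hence the two holomorphic atlases are compatible and define the same holomorphic structure on $E$. Equivalently: a holomorphic structure is determined by its sheaf of holomorphic sections inside $\Om^0(-;E)$, and we have just shown both structures induce the same sheaf. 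This completes the proof.
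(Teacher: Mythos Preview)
Your proposal is correct and follows precisely the route the paper itself points to: the text does not give a self-contained proof but only remarks that one must show the sheaf of local solutions of $D''s=0$ is locally free of rank $r$, referring to \cite{DK}, Chapter 2, Section 2, which is exactly the local-gauge/Cauchy-transform/contraction-mapping argument you outline. So your approach and the paper's (referenced) approach coincide.

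One small bookkeeping slip worth fixing: the equivalence ``$h_i^{-1}(\delb h_i)=-A_i$, equivalently $\delb h_i=-A_i h_i$'' is not an equivalence (the first gives $\delb h_i=-h_iA_i$), and the subsequent claim that $D''s=0$ becomes $\delb(h_\alpha s_\alpha)=0$ is only consistent with $\delb h_\alpha=h_\alpha A_\alpha$. Pick one convention --- e.g.\ solve $\delb h=hA$ so that $s\mapsto hs$ intertwines $D''$ with $\delb$ --- and carry it through; the fixed-point argument and the rest of your outline are unaffected.
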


The proof is a question of showing that the sheaf of local solutions to the equation $D''s=0$ is a locally free sheaf of rank $r=\rk{E}$ over the sheaf of holomorphic functions of $\Si$. It is then easy to check that two $\cG_E$-conjugate Dolbeault operators determine isomorphic holomorphic structures, since their kernels are conjugate in $\Om^0(\Si;E)$. Therefore, over a Riemann surface $\Si$, there is a bijection between the set of isomorphism classes of holomorphic structures on $E$ and $$\Dol(E) / \,\cG_E\, .$$ We refer to \cite{VLP}, Expos\'e 1, for an explanation of why any natural topology of this space is not Hausdorff. We conclude the present subsection by one further remark on Dolbeault operators, namely that a Dolbeault operator $D''$ on $E$ induces a Dolbeault operator $D''_E$ on $\End(E)$. First, note that, for all $k\geq 0$, $$\Om^k(\Si;\End(E)) =\Ga( \wedge^k T^*\Si \otimes_\R \End(E)) = \Hom(E;\wedge^k T^*\Si \otimes_\R E)\, .$$ So, given $u\in\Om^0(\Si;\End(E))$, we need only specify $(D''_Eu)(s)\in\Om^1(\Si;E)$ for all $s\in\Om^0(\Si;E)$ in order to completely determine $D''_Eu\in\Om^1(\Si;\End(E))$. Moreover, because $u(s)$ is locally a product between a matrix and a column vector, we want the  would-be operator $D''_E$ on $\End(E)$ to satisfy, for all $u\in\Om^0(\Si;\End(E))$ and all $s\in \Om^0(\Si;E)$, the generalised Leibniz identity

\begin{equation}\label{Leibniz_endo}
D''\big(u(s)\big) = (D''_Eu)(s) + u(D''s)
\end{equation}

\noindent so we \textit{define} $$(D''_Eu)(s) := D''\big(u(s)\big) - u (D''s).$$ Evidently, $D''_E$ is $\C$-linear in $u$ as a map from $\Om^0(\Si;\End(E))$ to $\Om^1(\Si;\End(E))$, and, if $f\in C^\infty(\Si;\C)$, one has

\begin{eqnarray*}
\big(D''_E(fu)\big) (s) & = & D''\big(f u(s)\big) - (fu) D''s \\
& = & (\delb{f})\, u(s)+ fD''\big(u(s)\big) - f \big(u(D''s)\big) \\
& = & [ (\delb{f}) u + f(D''_Eu) ] (s)
\end{eqnarray*}

\noindent so $D''_E$ is indeed a Dolbeault operator on $\End(E)$. In practice, it is simply denoted $D''$, which, if anything, makes \eqref{Leibniz_endo} more transparent and easy to remember. As a consequence of the Leibniz identity \eqref{Leibniz_endo}, one can modify the way the $\cG_E$-action on $\Dol(E)$ is written:

\begin{eqnarray*}
(u\cdot D'')(s) & = & u\big(D''(u^{-1}s)\big)\\
& = & u\big((D''(u^{-1}))s + u^{-1}(D''s)\big) \\
& = & u(-u^{-1}(D''u)u^{-1}s + u^{-1}D''s) \\
& = & D''s - (D''u)u^{-1}s
\end{eqnarray*}

\noindent so

\begin{equation}\label{gauge_action_Dolbeault}
u\cdot D'' = D'' - (D''u)u^{-1}\, ,
\end{equation}

\noindent which is the way the $\cG_E$-action on $\Dol(E)$ is usually written when performing explicit computations (we refer to Exercise \ref{derivative_of_inverse} for the computation of $D''(u^{-1})$ used in the above).\vskip10pt

\subsection{Exercises}\hfill

\begin{multicols}{2}

\begin{exercise}
Show that the quotient topological space defined in \eqref{bundle_defined_by_gluing} is a vector bundle of rank $r$ on $X$.
\end{exercise}

\begin{exercise}\label{Mobius}
Show that the M\"obius bundle $\cM\lra S^1$ is isomorphic to the canonical bundle $E_{\mathrm{can}} \lra \RP^1$ (see Example \ref{examples_of_vb}).
\end{exercise}

\begin{exercise}\label{product_bundle}
Show that a vector bundle of rank $r$, $p:E\lra X$, say, is isomorphic to the product bundle $X\times \K^r$ if and only f there exist $r$ global sections $s_1, \cdots, s_r\in\Ga(X;E)$ such that, for all $x\in X$, $(s_1(x),\cdots, s_r(x))$ is a basis of $E_x$ over $\K$.
\end{exercise}

\begin{exercise}\label{convex_comb_of_conn}
\textbf{a.} Let $(D_i)_{1 \leq i\leq n}$ be $n$ linear connections on a vector bundle $E\lra M$, and let $(\lambda_i)_{1\leq i\leq n}$ be $n$ non-negative real numbers satisfying $\sum_{i=1}^n \lambda_i =1$. Show that the convex combination $D=\sum_{i=1}^n \lambda_i D_i$ is a linear connection on $E$.\\
\textbf{b.} Let $(U_i)_{i\in I}$ be a covering of $M$ by trivialising open sets for $E$, and let $D_i$ be the product connection on $E|_{U_i}$. Let $(f_i)_{i\in I}$ be a partition of unity subordinate to $(U_i)_{i\in I}$. Show that $D:=\sum_{i\in I}D_i$ is a well-defined map from $\Om^0(M;E)$ to $\Om^1(M;E)$, and that it is a linear connection on $E$.
\end{exercise}

\begin{exercise}\label{action_on_Dol(E)}
Let $E$ be a smooth complex vector bundle and let $D''$ be a Dolbeault operator on $E$. Let $u$ be an automorphism of $E$. Define $u\cdot D''$ by $$(u\cdot D'')(s) = u\big(D''(u^{-1}s)\big)$$ on sections of $E$.\\ \textbf{a.} Show that $u\cdot D''$ is a Dolbeault operator on $E$.\\ \textbf{b.} Show that this defines an action of the group of automorphisms of $E$ on the set of Dolbeault operators.
\end{exercise}

\begin{exercise}\label{derivative_of_inverse}
Let $D''$ be a Dolbeault operator on a smooth complex vector bundle $E$, and let $u$ be an automorphism of $E$. Show that $$D''(u^{-1}) = - u^{-1}(D''u)u^{-1}.$$
\end{exercise}

\end{multicols}

\section{Holomorphic structures and unitary connections}\label{unitary_connections}

In this section, we study the space of holomorphic structures on a smooth complex vector bundle $E$ over a Riemann surface $\Si$ \textit{in the additional presence of a Hermitian metric $h$ on $E$}. This has the effect of replacing the space of Dolbeault operators by another space of differential operators: the space of unitary connections on $(E,h)$. This new affine space turns out to have a natural structure of infinite-dimensional K\"ahler manifold. Moreover, the action of the group of unitary transformations of $(E,h)$ on the space of unitary connections is a Hamiltonian action, and this geometric point of view, initiated by Atiyah and Bott in \cite{AB}, will be key to understanding Donaldson's Theorem in Subsection \ref{section_on_Donaldson_thm}.

\subsection{Hermitian metrics and unitary connections}\label{unitary_conn}

\begin{definition}[Hermitian metric]
Let $E$ be a smooth complex vector bundle on a smooth manifold $M$. A \textbf{Hermitian metric} $h$ on $E$ is a family $(h_x)_{x\in X}$ of maps $$h_x: E_x \times E_x \lra \C$$ such that

\begin{enumerate}
\item $\forall(v,w_1,w_2)\in E_x \times E_x\times E_x$, $$h(v, w_1+w_2)= h(v,w_1) + h(v,w_2),$$
\item $\forall (v,w)\in E_x \times E_x$, $\forall \lambda\in\C$, $$h(v,\lambda w) = \lambda h(v,w),$$
\item $\forall (v,w)\in E_x \times E_x$, $$h(w,v) = \overline{h(v,w)},$$
\item $\forall v\in E_x\setminus \{0\}$, $$h(v,v)>0,$$
\item for any pair $(s,s')$ of smooth sections of $E$, the function $$h(s,s'): M \lra \C$$ is smooth.
\end{enumerate}

\end{definition}

In other words, $h$ is a smooth family of Hermitian products on the fibres of $E$. A smooth complex vector bundle with a Hermitian metric is called a \textbf{smooth Hermitian vector bundle}.

\begin{definition}
A \textbf{unitary transformation} of $(E,h)$ is an automorphism $u$ of $E$ satisfying, for any pair $(s,s')$ of smooth sections of $E$, $$h\big(u(s),u(s')\big) = h(s,s').$$ In other words, a unitary transformation is fibrewise an isometry. The group $\cG_h$ of unitary transformations of $(E,h)$ is called the (unitary) \textbf{gauge group}. There is a group bundle $\U(E,h)$, whose typical fibre is $\U(r)$ and whose structure group is $\Ad{\U(r)}$, such that $\cG_h=\Ga(\U(E,h))$.\\ A \textbf{Hermitian transformation} is an endomorphism $u$ of $E$ satisfying $$h\big(u(s),s'\big) = h\big(s,u(s')\big).$$ An \textbf{anti-Hermitian transformation} is an endomorphism $u$ of $E$ satisfying $$h\big(u(s),s'\big) = -h\big(s,u(s')\big).$$ The Lie algebra bundle whose sections are anti-Hermitian endomorphisms of $E$ is denoted $\frak{u}(E,h)$. Its typical fibre is the Lie algebra $\frak{u}(r)=Lie(\U(r))$ and its structure group is $\Ad{\U(r)}$. As one might expect, $\Ga(\frak{u}(E,h)) = \Om^0(M;\frak{u}(E,h))$ is actually the Lie algebra of $\Ga(\U(E,h))=\cG_h$.
\end{definition}

\begin{proposition}[Reduction of structure group]\label{reduction_of_structure_group}
Let $(E\lra M)$ be a smooth complex vector bundle. Given a Hermitian metric $h$ on $E$, there exists a $\U(r)$-valued $1$-cocycle $$g_{ij}: U_i \cap U_j \lra \U(r) \subset \GL(r,\C)$$ representing $E$. Two such cocycles differ by an $\U(r)$-valued $0$-cocycle. Conversely, an atlas of $E$ whose transition maps are given by a unitary $1$-cocycle determines a Hermitian metric on $E$.
\end{proposition}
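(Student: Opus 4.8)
The plan is to establish the three assertions in turn; only the first requires a genuine construction. For it, I would start from an arbitrary atlas $(U_i,\phi_i)_{i\in I}$ of $E$, with transition maps $g_{ij}\colon U_i\cap U_j\to\GL(r,\C)$, and keep the open cover $(U_i)_{i\in I}$ fixed while modifying the trivialisations. In the chart $(U_i,\phi_i)$ the metric $h$ is represented by a smooth map $H_i\colon U_i\to\GL(r,\C)$ whose values are positive-definite Hermitian matrices, via $h_x(v,w)=\overline{v_i}^{T}H_i(x)\,w_i$, where $v_i,w_i\in\C^r$ denote $v,w\in E_x$ read in the chart $(U_i,\phi_i)$. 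The key step is to orthonormalise the local frame defined by $\phi_i$ against $h$; concretely I would set $\psi_i(v):=(x,H_i(x)^{1/2}v_i)$, using the unique positive-definite Hermitian square root of $H_i(x)$. Since $A\mapsto A^2$ is a diffeomorphism of the open cone of positive-definite Hermitian matrices, $x\mapsto H_i(x)^{1/2}$ is smooth, so $\psi_i$ is a new smooth trivialisation of $E$ over $U_i$; in it, $h$ becomes the standard Hermitian product $\langle\,,\,\rangle$ of $\C^r$ because $(H_i^{1/2})^*H_i^{1/2}=H_i$. The new transition maps $g'_{ij}:=\psi_i\circ\psi_j^{-1}$ then satisfy $\langle v_i',w_i'\rangle=h_x(v,w)=\langle v_j',w_j'\rangle$ for all $v,w\in E_x$ with $v_i'=g'_{ij}(x)v_j'$, which forces $(g'_{ij}(x))^*g'_{ij}(x)=I_r$, i.e. $g'_{ij}(x)\in\U(r)$; and $(g'_{ij})_{(i,j)}$ still represents $E$ because $g'_{ij}=H_i^{1/2}g_{ij}H_j^{-1/2}$, so it differs from $(g_{ij})_{(i,j)}$ by the $\GL(r,\C)$-valued $0$-cocycle $(H_i^{1/2})_i$, in the sense recalled in Section \ref{Hol_VB}. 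I expect the one point deserving care to be the \emph{smoothness} of this orthonormalisation (equivalently, of the Gram--Schmidt process, which only ever divides by the nowhere-vanishing norms $h_x(v,v)^{1/2}$): this is exactly what positivity and smoothness of $h$ provide.

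For the second assertion, let $(g_{ij})$ and $(\widetilde g_{ij})$ be two $\U(r)$-valued cocycles representing $E$. Passing to a common refinement of their underlying covers — which changes neither the bundle nor the property of being $\U(r)$-valued — we may assume both are subordinate to one cover $(U_i)$, and then, by the discussion in Section \ref{Hol_VB}, there is a family $u_i\colon U_i\to\GL(r,\C)$ with $\widetilde g_{ij}=u_i g_{ij}u_j^{-1}$, where $u_i$ is the corresponding change of trivialisation $U_i\times\C^r\to U_i\times\C^r$. Both atlases pull the standard product $\langle\,,\,\rangle$ of $\C^r$ back to one and the same Hermitian metric on $E|_{U_i}$ (the metric recovered from either cocycle as in the third assertion, which for the cocycles produced in the first part is the original $h$), so each $u_i(x)$ is an isometry of $(\C^r,\langle\,,\,\rangle)$; hence $u_i(x)\in\U(r)$ and the two cocycles differ by a $\U(r)$-valued $0$-cocycle.

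For the converse, given an atlas $(U_i,\phi_i)$ of $E$ whose transition maps $g_{ij}\colon U_i\cap U_j\to\U(r)$ are unitary, I would pull the standard Hermitian product of $\C^r$ back through each $\phi_i$ to obtain a Hermitian metric $h^{(i)}$ on $E|_{U_i}$ and check compatibility on overlaps: for $x\in U_i\cap U_j$ and $v,w\in E_x$, $h^{(i)}_x(v,w)=\langle v_i,w_i\rangle=\langle g_{ij}(x)v_j,g_{ij}(x)w_j\rangle=\langle v_j,w_j\rangle=h^{(j)}_x(v,w)$, precisely because $g_{ij}(x)\in\U(r)$. Hence the $h^{(i)}$ glue to a well-defined family $(h_x)_{x\in M}$ of Hermitian products on the fibres of $E$; axioms (1)--(4) of a Hermitian metric hold fibrewise since they hold for $\langle\,,\,\rangle$ on $\C^r$, and smoothness (axiom (5)) is local and immediate from the smoothness of the $\phi_i$ and of $\langle\,,\,\rangle$. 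By construction the given atlas is then a unitary atlas for $(E,h)$, which closes the loop.
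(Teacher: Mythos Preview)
Your proof is correct and follows the same architecture as the paper's: orthonormalise local frames to obtain unitary trivialisations, observe that the resulting transition maps preserve the standard Hermitian product, and check the converse by pulling back the standard product through a unitary atlas. The only genuine difference is the choice of orthonormalisation: the paper invokes the Gram--Schmidt process on a local frame, whereas you multiply by the positive square root $H_i(x)^{1/2}$ of the Gram matrix. Both are smooth and both produce $h$-orthonormal frames; your version is slightly more canonical (no ordering of basis vectors, and the $0$-cochain $(H_i^{1/2})_i$ relating old and new cocycles is explicit), while Gram--Schmidt is marginally more elementary in that it avoids appealing to the smooth square root on the cone of positive-definite Hermitian matrices. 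For the second assertion, your parenthetical is a little convoluted, but the intended reading of the proposition is that both unitary cocycles arise from the \emph{same} metric $h$, and under that hypothesis your isometry argument for the $u_i$ is exactly right.
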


More generally, if $H$ is a subgroup of $\GL(r,\C)$ and a vector bundle $E$ can be represented by an $H$-valued $1$-cocycle whose class modulo $H$-valued $0$-cocycles is uniquely defined, one says that the structure group of $E$ has been \textit{reduced} to $H$. The proposition above says that a Hermitian metric is equivalent to a reduction of the structure group $\GL(r,\C)$ of a complex rank $r$ vector bundle to the maximal compact subgroup $\U(r)$. In the general theory of fibre bundles, the existence of such a reduction is usually deduced from the fact that the homogeneous space $\GL(r,\C)/\U(r)$ (the space of Hermitian inner products on $\C^r$) is contractible (see for instance \cite{Steenrod}).

\begin{proof}[Proof of Proposition \ref{reduction_of_structure_group}]
Let $h$ be a Hermitian metric on $E$. Using the Gram-Schmidt process, one can obtain an $h$-unitary local frame out of any given local frame of $E$, hereby identifying $E|_U$ with $U\times \C^r$ where $\C^r$ is endowed with its canonical Hermitian inner product. The transition functions of such an atlas have an associated $1$-cocycle of transition maps which preserves the Hermitian product and is therefore $\U(r)$-valued. A different choice of unitary frames leads to a $\U(r)$-equivalent $1$-cocycle. Conversely, given such a $1$-cocycle, the Hermitian products obtained on the fibres of $E|_U$ and $E|_V$ respectively via the identifications with $U\times\C^r$ and $V\times \C^r$ coincide over $U\cap V$. 
\end{proof}

Since the structure group of the bundle has changed, it makes sense to ask whether there is a notion of connection which is compatible with this smaller structure group. This is usually better expressed in the language of principal bundles, but we shall not need this point of view in these notes (see for instance \cite{Kobayashi_Nomizu}).

\begin{definition}[Unitary connection]
Let $(E,h)$ be a smooth Hermitian vector bundle on a manifold $M$. A linear connection $$D: \Om^0(M;E) \lra \Om^1(M;E)$$ on $E$ is called \textbf{unitary} if, for any pair $(s,s')$ of smooth sections of $E$, one has $$d\big(h(s,s')\big) = h(Ds,s) + h(s,Ds')\, .$$
\end{definition}

The same standard arguments as in the case of linear connections and Dolbeault operators show that a Hermitian vector bundle always admits a unitary connection (locally, the product connection satisfies the unitarity condition), and that the space $\cA(E,h)$ of all unitary connections on $(E,h)$ is an affine space, whose group of translations is the vector space $\Om^1(M;\frak{u}(E,h))$ of $\frak{u}(E,h)$-valued $1$-forms on $M$.

Given a $k$-form $\alpha\in\Om^k(M;E)$ and a local trivialisation $(U,\phi_U)$ of $E$, let us denote 
$\alpha_U\in\Om^k(U;\C^r)$ the $k$-form obtained from reading $\alpha|_{U}$ in the local trivialisation $\phi_U: E|_{U} \overset{\simeq}{\lra} U\times\C^r$. Then, if $(g_{UV})_{(U,V)}$ is a $1$-cocycle of transition maps for $E$, one has $\alpha_U=g_{UV}\alpha_V$. Moreover, any linear connection is locally of the form $$(Ds)_U = d(s_U) + A_Us_U$$ where $A_U\in\Om^1(U;\frak{gl}(r,\C))$ is a family of matrix-valued $1$-forms defined on trivialising open sets by $A_Us_U = (Ds)_U - d(s_U)$, $d$ being the product de Rham operator on $\Om^1(U;\C^r)$, and subject to the condition, for all $s\in\Om^0(M;E)$,

\begin{eqnarray*}
(Ds)_U & = & g_{UV}(Ds)_V \\ 
& = & g_{UV} (ds_V + A_Vs_V) \\
& = & g_{UV} \big( d(g_{UV}^{-1}s_U) + A_V(g_{UV}^{-1}s_U)\big) \\
& = & g_{UV}\big(d(g_{UV}^{-1})s_U  + g_{UV}^{-1}ds_U\big) + g_{UV}A_Vg_{UV}^{-1}\, s_U \\
& = & ds_U + \big(g_{UV}A_Vg_{UV}^{-1} - (dg_{UV})g_{UV}^{-1}\big)s_U
\end{eqnarray*}

\noindent so

\begin{equation}\label{connection_gluing}
A_U = g_{UV}A_Vg_{UV}^{-1} - (dg_{UV})g_{UV}^{-1}\, .
\end{equation}

The family $(A_U)_U$ subject to Condition \eqref{connection_gluing} above is sometimes called the connection form, even though it is \textit{not} a global differential form on $M$. The connection determined by such a family is denoted $d_A$, or even simply $A$. Let us now analyse what it means for $d_A$ to be unitary. Given a pair $(s,s')$ of smooth sections of $E$, and a local chart $(U,\phi_U)$ of $E$, one has, on the one hand, $$d\big(h(s_U,s'_U)\big) = h(ds_U,s'_U) + h(s_U,ds'_U)$$ and, on the other hand, 

\begin{eqnarray*}
& & h\big((Ds)_U,s'_U\big) + h\big(s_U, (Ds')_U\big) \\
& = & h(ds_U, s'_U) + h(A_Us_U,s'_U) + h(s_U,ds'_U) + h(s_U,A_Us'_U)
\end{eqnarray*} 

\noindent so $d_A$ is unitary if and only if $$h(A_Us_U,s'_U) + h(s_U,A_Us'_U)=0$$ which means that the $1$-form $A_U$ is in fact $\frak{u}(r)$-valued. Conversely, if $(E,h)$ is represented by a unitary cocycle $(g_{UV}: U \cap V \lra \U(r))_{(U,V)}$ (Proposition \ref{reduction_of_structure_group}) and $(A_U)_U$ is a family of $\frak{u}(r)$-valued $1$-forms satisfying condition \eqref{connection_gluing}, then there is a unique unitary connection $d_A$ on $(E,h)$ such that, for all $s\in\Om^0(M;E)$, one has $(d_As)_U = ds_U + A_Us_U$ on each $U$.

Just like any linear connection on a smooth complex vector bundle over a complex manifold (Lemma \ref{(0,1)-part}), a unitary connection $$d_A:\Om^0(M;E) \lra \Om^1(M;E) = \Om^{1,0}(M;E) \oplus \Om^{0,1}(M;E)$$ splits into $d_A = d_A^{\, 1,0} \oplus d_A^{\, 0,1}$, where $d_A^{\, 1,0}$ takes a section $s$ of $E$ to the $\C$-linear part of its covariant derivative, and $d_A^{\, 0,1}$ takes $s$ to the $\C$-antilinear part of $d_As$. In particular, $$d_A^{\, 0,1}\, :\, \Om^1(M;E) \lra \Om^{0,1}(M;E)$$ is a Dolbeault operator. So, if $M=\Si$ is a Riemann surface, then, by the Newlander-Nirenberg Theorem, $d_A^{\, 0,1}$ determines a holomorphic structure on $E$. The next proposition shows what we gain by working in the presence of a Hermitian metric on $E$: a Dolbeault operator $D''$ on $E\lra M$ may be the $(0,1)$-part of various, non-equivalent linear connections, but it is the $(0,1)$-part of a \textit{unique} unitary connection.

\begin{proposition}
Let $(E,h)$ be a smooth Hermitian vector bundle on a complex manifold $M$, and let $$D'': \Om^0(M;E) \lra \Om^{0,1}(M;E)$$ be a Dolbeault operator on $E$. Then there exists a unique unitary connection $$d_A: \Om^0(M;E)\lra \Om^1(M;E)=\Om^{1,0}(M;E) \oplus \Om^{0,1}(M;E)$$ such that $d_A^{\, 0,1}=D''$.
\end{proposition}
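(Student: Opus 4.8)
The plan is to construct $d_A$ explicitly and then check uniqueness. The idea is to use the Hermitian metric $h$ to define a complementary operator $D'$ of type $(1,0)$ out of the given Dolbeault operator $D''$, in such a way that $d_A := D' + D''$ is automatically unitary. The guiding principle is that unitarity of a connection $d_A$ is the condition $d\big(h(s,s')\big) = h(d_As,s') + h(s,d_As')$, and that $h$ is $\C$-linear in the second variable and $\C$-antilinear in the first; decomposing both sides of this identity into bidegrees $(1,0)$ and $(0,1)$ splits the unitarity equation into two conjugate halves. One half expresses $d^{1,0}\big(h(s,s')\big)$ in terms of $D's$ and $D''s'$, the other half expresses $d^{0,1}\big(h(s,s')\big)$ in terms of $D''s$ and $D's'$; and because $h$ is conjugate-symmetric and $d^{0,1} = \overline{d^{1,0}}$ on functions (more precisely $\overline{\del f} = \delb\overline f$), these two halves are equivalent. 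So unitarity amounts to the single requirement
\begin{equation}\label{unitarity_split}
\del\big(h(s,s')\big) = h(D's,s') + h(s,D''s')
\end{equation}
for all smooth sections $s,s'$, where $\del$ is the $(1,0)$-part of the de Rham operator on functions.

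First I would use \eqref{unitarity_split} to \emph{define} $D'$: given $D''$, set $h(D's, s') := \del\big(h(s,s')\big) - h(s, D''s')$, and observe that, for fixed $s$, the right-hand side is a $(1,0)$-form depending $\C$-antilinearly... — here one must be slightly careful: the right-hand side is $\C$-linear in $s$ and its dependence on $s'$ is $\C$-antilinear and tensorial (i.e. $C^\infty(M;\C)$-antilinear), using the Leibniz rule for $\del$ and for $D''$. Since $h$ is fibrewise a nondegenerate pairing, pointwise nondegeneracy lets us solve for $D's \in \Om^{1,0}(M;E)$ uniquely. Then I would check that $s \mapsto D's$ is $\C$-linear and satisfies the Leibniz rule $D'(fs) = (\del f)s + f D's$: this is a direct computation feeding the Leibniz rules for $\del$ and $D''$ through the defining equation, together with the conjugate-linearity of $h$ in the first slot. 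Hence $D'$ is a $(1,0)$-connection, and $d_A := D' \oplus D''$ is a genuine linear connection on $E$ with $d_A^{\,0,1} = D''$ by construction.

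Next I would verify that $d_A$ is unitary. This means checking $d\big(h(s,s')\big) = h(d_As,s') + h(s,d_As')$; decompose into $(1,0)$ and $(0,1)$ parts. The $(1,0)$-part is exactly the defining equation \eqref{unitarity_split} for $D'$. The $(0,1)$-part reads $\delb\big(h(s,s')\big) = h(D''s,s') + h(s,D's')$; to get it, apply complex conjugation to \eqref{unitarity_split} with $s$ and $s'$ swapped, using $\overline{h(a,b)} = h(b,a)$ and $\overline{\del\varphi} = \delb\overline{\varphi}$ for a $\C$-valued function $\varphi$ (hence $\overline{\del h(s',s)} = \delb\overline{h(s',s)} = \delb h(s,s')$). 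So unitarity holds. Finally, uniqueness: if $d_{A_1}$ and $d_{A_2}$ are two unitary connections with the same $(0,1)$-part $D''$, their difference $a := d_{A_1} - d_{A_2}$ lies in $\Om^1(M;\End E)$, has vanishing $(0,1)$-part (so $a \in \Om^{1,0}(M;\End E)$), and is anti-Hermitian pointwise because the difference of two unitary connections is $\frak{u}(E,h)$-valued — but an anti-Hermitian $\End E$-valued form of pure type $(1,0)$ must vanish, since its $\frak{u}(E,h)$-valuedness forces its $(1,0)$ and $(0,1)$ parts to be adjoint to each other (the anti-Hermitian condition $h(a s, s') + h(s, a s') = 0$ pairs the $(1,0)$ and $(0,1)$ components), and here the $(0,1)$ component is zero, hence so is the $(1,0)$ component. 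Therefore $d_{A_1} = d_{A_2}$.

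The main obstacle is purely bookkeeping: getting the bidegree decomposition of the unitarity identity right and tracking where $\C$-linearity versus $\C$-antilinearity of $h$ enters, so that solving $h(D's,s') = \del(h(s,s')) - h(s,D''s')$ for $D's$ really does produce a well-defined $(1,0)$-connection and not something that secretly fails the Leibniz rule or fails to be tensorial in $s'$. Once that decomposition is set up cleanly, every step is a short computation; the only genuinely substantive input is the pointwise nondegeneracy of the Hermitian form, which is what makes $D'$ — and hence $d_A$ — exist and be unique.
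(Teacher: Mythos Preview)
Your global approach --- defining the $(1,0)$-part $D'$ directly from $D''$ and $h$ via the unitarity identity, then setting $d_A=D'+D''$ --- is correct and is the standard invariant construction of the Chern connection. However, you have the bidegree decomposition backwards, precisely the bookkeeping pitfall you warned yourself about. With the paper's convention ($h$ $\C$-antilinear in the first slot, $\C$-linear in the second), extending $h$ to form-valued sections so that the product connection is unitary forces the form part in the \emph{first} slot to be complex-conjugated; hence $h(D's,s')$ and $h(s,D''s')$ are both $(0,1)$-forms, and your displayed equation is the $(0,1)$-half of unitarity, not the $(1,0)$-half. The correct split reads
\[
\del\big(h(s,s')\big)=h(D''s,s')+h(s,D's'),\qquad \delb\big(h(s,s')\big)=h(D's,s')+h(s,D''s'),
\]
and it is the first of these you should use to \emph{define} $D's'$, solving in the second ($\C$-linear) slot via nondegeneracy of $h$. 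With that correction, your Leibniz, unitarity, and uniqueness verifications go through verbatim. (Your formula \emph{would} be the right one under the opposite convention for $h$, which is perhaps where the slip came from.)

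The paper takes a different, local route: writing $D''$ in a unitary frame as $\delb+B_U$ with $B_U\in\Om^{0,1}(U;\gl(r,\C))$, it sets $A_U:=B_U-B_U^{*}\in\Om^{1}(U;\fu(r))$, checks that the family $(A_U)$ satisfies the connection gluing relation \eqref{connection_gluing} (this is where unitarity of the transition cocycle $g_{UV}$ enters), and reads off $A_U^{0,1}=B_U$, $A_U^{1,0}=-B_U^{*}$, the latter forcing uniqueness. Your argument avoids frames and gluing at the cost of abstract tensoriality checks; the paper's argument makes the real-linear isomorphism $\Om^{0,1}(U;\gl(r,\C))\simeq\Om^{1}(U;\fu(r))$, $B\mapsto B-B^{*}$, completely explicit, which pays off later when computing with the gauge action. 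The two proofs encode the same linear-algebra fact from complementary viewpoints.
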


\begin{proof}
Like for many other results in these notes, the proof essentially boils down to linear algebra. Let $(g_{UV})_{(U,V)}$ be a unitary $1$-cocycle representing $(E,h)$. The Dolbeault operator $D''$ is locally of the form $$(D''s)_U = \delb{s_U} + B_U s_U$$ where $B_U\in\Om^{0,1}(U;\frak{gl}(r,\C))$ and $\delb$ is the product Cauchy-Riemann operator on $\Om^0(U;\C^r)$, and where the family $(B_U)_{U}$ satisfies

\begin{equation}\label{Dolbeault_gluing}
B_U = g_{UV} B_V g_{UV}^{-1} - (\delb{g_{UV}})g_{UV}^{-1}
\end{equation}

\noindent (this does not require $g_{UV}$ to be unitary). We then have an isomorphism of real vector spaces

$$\begin{array}{ccc}
\Om^{0,1}(U;\frak{gl}(r,\C)) & \lra & \Om^1(U;\frak{u}(r)) \\
B_U & \lmt & A_U:= B_U -B_U^*
\end{array}$$

\noindent where $B_U^* = \overline{B_U}^t$ is the adjoint of $B_U$, the converse map being $$A_U \lmt A_U^{0,1} = \frac{A_U(\,\cdot\,) + i A_U(i\,\cdot\,)}{2}\cdot$$ One may observe here that $$A_U^{1,0} = \frac{A_U(\,\cdot\,) - i A_U(i\,\cdot\,)}{2} = -B_U^*\, .$$ Moreover, as $g_{UV}$ is unitary, $g_{UV}^* = g_{UV}^{-1}$ and therefore

\begin{eqnarray*}
A_U & = & B_U - B_U^* \\
& = & g_{UV} B_V g_{UV}^{-1} - (\delb{g_{UV}})g_{UV}^{-1} - g_{UV}B_V^*g_{UV}^{-1} + g_{UV}(\del{(g_{UV}^*)}) \\
& = & g_{UV}(B_V - B_V^*) g_{UV}^{-1} - (\del{g_{UV}} + \delb{g_{UV}})g_{UV}^{-1} \\
& = & g_{UV} A_V g_{UV}^{-1} - (dg_{UV})g_{UV}^{-1}
\end{eqnarray*}

\noindent so the family $(A_U)_U$ is a unitary connection on $(E,h)$, and $d_A^{\, 0,1} = (B_U)_U = D''$. Conversely, if $(A_U)_U$ is a unitary connection on $(E,h)$ such that $A_U^{0,1}=B_U$ for all $U$, then $A_U^{1,0} = -(A_U^{0,1})^* = -B_U^*$, so such a unitary connection is unique: $A_U = A_U^{1,0} + A_U^{0,1}=- B_U^* + B_U$.
\end{proof}

Observe that the family $(B_U)_U$ satisfying Condition \eqref{Dolbeault_gluing} completely determines the Dolbeault operator $D''$, which therefore could be denoted $\delb_B$, or even simply $B$.

\begin{corollary}
Let $\Si$ be a Riemann surface, and let $E$ be a smooth complex vector bundle on $\Si$. Then the choice of a Hermitian metric $h$ on $E$ determines an isomorphism of affine spaces

$$\begin{array}{ccc}
\cA(E,h) & \overset{\simeq}{\lra} & \Dol(E) \\
d_A & \lmt & d_A^{\, 0,1}
\end{array}$$

\noindent between the space of unitary connections on $(E,h)$ and the space of Dolbeault operators on $E$.
\end{corollary}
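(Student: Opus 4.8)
The plan is to reduce the corollary to the preceding proposition together with the fact, already established in the text, that both $\cA(E,h)$ and $\Dol(E)$ are affine spaces. First I would observe that the map $d_A \mapsto d_A^{\,0,1}$ is well defined: any linear connection splits as $d_A = d_A^{\,1,0} \oplus d_A^{\,0,1}$ (this was recorded just before Lemma \ref{(0,1)-part}), and $d_A^{\,0,1}$ was noted there to be a Dolbeault operator, so the assignment indeed lands in $\Dol(E)$. Next I would invoke the previous proposition, applied with $M = \Si$: given \emph{any} $D'' \in \Dol(E)$, there exists a \emph{unique} unitary connection $d_A \in \cA(E,h)$ with $d_A^{\,0,1} = D''$. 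The existence part of that statement says exactly that our map is surjective, and the uniqueness part says exactly that it is injective. Hence the map is a bijection.

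It then remains to check that this bijection is \emph{affine}, i.e. compatible with the two affine structures. Recall that the group of translations of $\cA(E,h)$ is $\Om^1(\Si;\frak{u}(E,h))$ and that of $\Dol(E)$ is $\Om^{0,1}(\Si;\End(E))$. So I would exhibit the underlying linear isomorphism of translation spaces and verify equivariance. Concretely, if $d_{A_1}, d_{A_2} \in \cA(E,h)$ differ by $a \in \Om^1(\Si;\frak{u}(E,h))$, meaning $d_{A_1} = d_{A_2} + a$, then taking $(0,1)$-parts gives $d_{A_1}^{\,0,1} = d_{A_2}^{\,0,1} + a^{0,1}$, so the map induces, on translation spaces, the $\R$-linear map $a \mapsto a^{0,1}$ sending $\Om^1(\Si;\frak{u}(E,h))$ to $\Om^{0,1}(\Si;\End(E))$. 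That this is an isomorphism is precisely the pointwise linear-algebra fact used in the proof of the previous proposition: reading things in a unitary local frame, $a_U \in \Om^1(U;\frak{u}(r))$ corresponds to $B_U = a_U^{0,1} \in \Om^{0,1}(U;\frak{gl}(r,\C))$ with inverse $B_U \mapsto B_U - B_U^*$, and these local assignments glue because the transition cocycle is unitary (so $g_{UV}^* = g_{UV}^{-1}$). Compatibility with basepoints is automatic once one notes the map sends some (equivalently, any) chosen unitary connection to its $(0,1)$-part; affineness of the whole map follows formally from equivariance under the translation groups.

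I do not expect a serious obstacle here: the corollary is essentially a repackaging of the preceding proposition in the special case $M = \Si$, with the only genuine content beyond "bijection" being the bookkeeping that identifies the two translation spaces via $a \mapsto a^{0,1}$ and checks that this identification is exactly the one implicit in the proof above. The mildly delicate point — and the one I would be careful to state rather than belabor — is that the local isomorphisms $\Om^{0,1}(U;\frak{gl}(r,\C)) \cong \Om^1(U;\frak{u}(r))$ are globally consistent, which hinges on the Hermitian metric (equivalently, on working with a $\U(r)$-valued cocycle as in Proposition \ref{reduction_of_structure_group}); without the metric the splitting of a connection into an anti-Hermitian $1$-form is not available. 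Everything else is the routine verification that taking $(0,1)$-parts is $\R$-linear and intertwines the affine actions, which I would dispatch in a sentence.
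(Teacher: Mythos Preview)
Your proposal is correct and follows exactly the route the paper intends: the corollary is stated without proof because it is meant to be an immediate consequence of the preceding proposition, and your argument unpacks precisely that---bijectivity from existence and uniqueness in the proposition, and affineness from the local real-linear isomorphism $\Om^1(U;\fu(r)) \cong \Om^{0,1}(U;\gl(r,\C))$, $A_U \mapsto A_U^{0,1}$, with inverse $B_U \mapsto B_U - B_U^*$, already exhibited in that proof. There is nothing to add; if anything, you could trim the last paragraph, since the global consistency of the local isomorphisms under a unitary cocycle is already contained in the proposition's proof and need not be re-argued.
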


Recall that we denote $\cG_h=\Ga(\U(E,h))$ the group of unitary automorphisms of $(E,h)$. It is commonly called the \textit{unitary gauge group}. As for the group $\cG_E=\Ga(\GL(E))$ of all complex linear automorphisms of $E$, it is commonly called the \textit{complex gauge group}. A good reason for this terminology is that $\cG_E$ is actually the complexification of $\cG_h$ (indeed $\GL(r,\C)$ is the complexification of $\U(r)$, so the typical fibre of $\GL(E)$ is the complexification of the typical fibre of $\U(E,h)$). We saw in Section \ref{Dolbeault_op} that, over a Riemann surface $\Si$, the set of isomorphism classes of holomorphic structures on $E$ was in bijection with the orbit space $$\Dol(E) / \, \cG_E\, .$$ Now if $E$ has Hermitian metric $h$, we can replace, as we have just seen, $\Dol(E)$ with $\cA(E,h)$, and then use the bijection between the two to transport the $\cG_E$-action from $\Dol(E)$ to $\cA(E,h)$. Computing through this procedure gives, for all $u\in\cG_E$ and all $d_A\in\cA(E,h)$, the relation 

\begin{equation}\label{extension_of_unitary_gauge_action}
u\cdot d_A = d_A -\big[(d_A^{\, 0,1}u)u^{-1} - \big((d_A^{\, 0,1}u)u^{-1}\big)^*\big]
\end{equation}

\noindent where $d_A^{\, 0,1}u$ denotes the $\C$-antilinear part of the covariant derivative of the \textit{endomorphism} $u$ (this extension of a Dolbeault operator on a bundle to endomorphisms of that bundle was discussed at the end of Section \ref{Dolbeault_op}) and $\alpha^*$ denotes the $h$-unitary adjoint of an $E$-valued, or an $\End{E}$-valued, $k$-form $\alpha$ (the proof of relation \eqref{extension_of_unitary_gauge_action} is proposed as an exercise in Exercise \ref{proof_extension_of_unitary_gauge_action}). In particular, if $u$ actually lies in $\cG_h\subset \cG_E$, then $u^*=u^{-1}$, and $$(d_A^{\, 0,1}u)^* = d_A^{\, 1,0}(u^*) = d_A^{\, 1,0}(u^{-1}) = -u(d_A^{\, 1,0}u)u^{-1}$$ so

\begin{eqnarray*}
u\cdot d_A & = & d_A - (d_A^{\, 0,1}u+d_A^{\, 1,0}u)u^{-1} \\
& = & d_A - (d_A u)u^{-1}\, ,
\end{eqnarray*}

\noindent which is no other than the natural action of $\cG_h$ on $\cA(E,h)$, defined for all $s\in\Om^0(M;E)$ by $$(u\cdot d_A)(s) = u\big(d_A(u^{-1}s)\big)$$ (the exact formal analogue of the $\cG_E$-action on $\Dol(E)$, see Equations \eqref{action_on_Dolbeault_op} and \eqref{gauge_action_Dolbeault}). The fact that the action of $\cG_h$ on $\cA(E,h)$ extends to an action of $\cG_E=\cG_h^{\C}$ is what eventually explains the relation between the symplectic picture and the Geometric Invariant Theoretic picture for vector bundles on a curve, a relation which plays an important part in Donaldson's Theorem. 

To sum up, the choice of a Hermitian metric on a smooth complex vector bundle $E\lra \Si$ over a Riemann surface provides an identification between the set of isomorphism classes of holomorphic structures on $E$ and the orbit space $$\cA(E,h)/\, \cG_E\, .$$ This raises the question: what happens if we choose a different metric? If $h$ and $h'$ are two Hermitian metrics on $E$, then there exists an automorphism $u\in\cG_E$ (in fact unique up to mutiplication by an element of $\cG_h$) such that $$h'=u^*h$$ (meaning that, for any pair $(s_1,s_2)$ of smooth sections of $E$, one has $h'(s_1,s_2) = h(us_1,us_2)$). In particular, a linear connection $D$ on $E$ is $h'$-unitary if and only if the linear connection $u\cdot D= u(D(u^{-1}\,\cdot\,))$ is $h$-unitary. Indeed,

\begin{eqnarray*}
d\big(h(s_1,s_2)\big) & = & d \big(h'(u^{-1}s_1,u^{-1}s_2)\big) \\
& = & h'\big(D(u^{-1}s_1),u^{-1}s_2\big) + h'\big(u^{-1}s_1,D(u^{-1}s_2)\big) \\
& = & h \big( (u\cdot D)(s_1),s_2\big) + h\big(s_1,(u\cdot D)(s_2)\big)\, .
\end{eqnarray*}

\noindent Therefore, there is a non-canonical bijection $\cA(E,h') \simeq \cA(E,h)$ \textit{with the key property that it sends $\cG_E$-orbits to $\cG_E$-orbits}. In particular, there is a \textit{canonical} bijection $$\cA(E,h') /\, \cG_E \simeq \cA(E,h) /\, \cG_E\, .$$ This renders the choice of the metric unimportant in the whole analysis of holomorphic structures on $E$: the space $\cA(E,h)$ depends on that choice, but not the space $\cA(E,h)/\, \cG_E$, which is the space of isomorphism classes of holomorphic structures on $E$.

\subsection{The Atiyah-Bott symplectic form}

Only from this point on does it become truly necessary to assume that the base manifold of our holomorphic bundles be a \textit{compact}, \textit{connected} Riemann surface $\Si_g$ ($g$ being the genus). The fact that $\Si_g$ is of complex dimension one has already been used, though, for instance to show that \textit{any} unitary connection on a smooth Hermitian vector bundle $(E,h)$ over $\Si_g$ defines a holomorphic structure on $E$. We shall now use the compactness of $\Si_g$ to show that $\cA(E,h)$ has a natural structure of infinite-dimensional symplectic (in fact, K\"ahler) manifold. Actually, for this to be true, we would need to amend our presentation of Dolbeault operators and unitary connections to allow non-smooth such operators. Indeed, as the vector space $\Om^1(\Si;\frak{u}(E,h))$ on which the affine space $\cA(E,h)$ is modelled is infinite-dimensional, we have to choose a topology on it. In order to turn the resulting topological vector space into a Banach space, we have to work with connections which are not necessarily smooth, but instead lie in a certain Sobolev completion of the space of smooth connections, and the same goes for gauge transformations. We refer to \cite{AB} (Section 13) and \cite{Don_NS,DK} for a discussion of this problem. Atiyah and Bott have in particular shown that gauge orbits of such unitary connections always contain smooth connections, and that two smooth connections lying in a same gauge orbit can also be conjugated by a smooth gauge transformation. These analytic results enable us to ignore the issue of having to specify the correct connection spaces and gauge groups, and focus on the geometric side of the ideas of Atiyah-Bott and Donaldson instead.

Recall that the space $\cA(E,h)$ of unitary connections on a smooth Hermitian vector bundle $(E,h)$ is an affine space whose group of translations is the space $\Om^1(\Si_g;\frak{u}(E,h))$ of $1$-forms with values in the bundle of anti-Hermitian endomorphisms of $(E,h)$. In particular, the tangent space at $A$ to $\cA(E,h)$ is canonically identified with $\Om^1(\Si_g;\frak{u}(E,h))$. We assume throughout that the Riemann surface $\Si_g$ comes equipped with a compatible Riemannian metric of normalised unit volume. Compatibility in the present context means that the complex structure $I$ on each tangent plane to $\Si_g$ is an isometry of the Riemannian metric. This defines in particular a symplectic form, also a volume form since $\dim_{\R}\Si_g=2$, namely $\vol_{\Si_g}=g(I\cdot\, |\, \cdot)$. The typical fibre of $\frak{u}(E,h)$ is the Lie algebra $\frak{u}(r)$ of anti-Hermitian matrices of size $r$, so it has a canonical, positive definite inner product $$\kappa:= -\tr : \begin{array}{ccc}
\fu(r) \otimes \fu(r) & \lra & \R \\ (X;Y) & \lmt & - \tr(XY) \end{array}$$ (the restriction to $\fu(r)$ of the canonical Hermitian product $(X,Y) \lmt -\tr(\ov{X}^t\, Y)$ of $\gl(r,\C)$).

Given $A\in \cA(E,h)$ and $a,b\in T_A\cA(E,h) \simeq \Om^1(\Si_g;\fu(E,h))$, $a\wedge b$ is the $\fu(E,h)\otimes \fu(E,h)$-valued $2$-form defined by $$(a\wedge b)_x(v,w) = (a_x(v) \otimes b_x(w) - b_x(v) \otimes a_x(w)) \in \fu(r)\otimes\fu(r).$$ So $$\kappa(a\wedge b)_x(v,w) := -\tr(a_x(v)b_x(w)) + \tr(b_x(v) a_x(w))$$ is an $\R$-valued $2$-form on $\Si_g$. Note indeed that $\kappa(b\wedge a) = - \kappa(a\wedge b)$ because $b\wedge a = -a\wedge b$. Since $\Si_g$ is oriented and compact, the integral $$\w_A(a,b):= \int_{\Si_g} \kappa(a\wedge b)\in \R$$ defines a $2$-form on $\cA(E,h)$.

\begin{proposition}[Atiyah-Bott]\label{non_degeneracy}
The $2$-form $\w$ defined on $\cA(E,h)$ by $$\w_A(a,b):= \int_{\Si_g} \kappa(a\wedge b)$$ is a symplectic form.
\end{proposition}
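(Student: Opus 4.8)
The plan is to verify the two defining properties of a symplectic form on the infinite-dimensional manifold $\cA(E,h)$: that $\w$ is a closed $2$-form, and that it is (weakly) non-degenerate at every point. Since $\cA(E,h)$ is an affine space with constant translation group $V:=\Om^1(\Si_g;\fu(E,h))$, and the formula for $\w_A(a,b)=\int_{\Si_g}\kappa(a\wedge b)$ does not depend on the basepoint $A$, the $2$-form $\w$ is \emph{translation-invariant}, i.e. a constant-coefficient form. Closedness is therefore automatic: for a constant $2$-form on an affine space, $d\w=0$ because the exterior derivative of a form with constant coefficients vanishes (concretely, in the Cartan formula for $d\w$ applied to constant vector fields $a,b,c$ on $\cA(E,h)$, all the directional-derivative terms $a\cdot\w(b,c)$ vanish and all the bracket terms $\w([a,b],c)$ vanish since constant vector fields commute). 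Antisymmetry of $\w_A$ was already observed in the excerpt, since $b\wedge a=-a\wedge b$ forces $\kappa(b\wedge a)=-\kappa(a\wedge b)$, so $\w_A$ is a genuine alternating bilinear form on $V$.

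The substance of the proof is non-degeneracy: I must show that if $a\in V$ satisfies $\w_A(a,b)=0$ for all $b\in V$, then $a=0$. This is a pointwise/local computation. Fix $a\in\Om^1(\Si_g;\fu(E,h))$ with $a\neq 0$; then there is a point $x_0\in\Si_g$ and a tangent vector $v_0\in T_{x_0}\Si_g$ with $a_{x_0}(v_0)\neq 0$ in $\fu(r)$ (after a local unitary trivialisation). Let $w_0=Iv_0$, so that $(v_0,w_0)$ is a positively oriented basis of $T_{x_0}\Si_g$ and $\vol_{\Si_g}(v_0,w_0)>0$. The idea is to choose $b$ so that $\kappa(a\wedge b)$ is a non-negative multiple of $\vol_{\Si_g}$, strictly positive near $x_0$, which forces the integral to be positive. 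Concretely, take $b_{x_0}(v_0):=0$ and $b_{x_0}(w_0):=-a_{x_0}(v_0)$, extended to a neighbourhood and cut off by a bump function supported near $x_0$ (using that $V$ consists of \emph{smooth} sections, and such compactly supported sections certainly lie in $V$). With $X:=a_{x_0}(v_0)\in\fu(r)$, the integrand at $x_0$ becomes, evaluated on $(v_0,w_0)$, $-\tr(a(v_0)b(w_0))+\tr(b(v_0)a(w_0)) = -\tr(X\cdot(-X)) = \tr(X^2) = -\kappa(X,X)\cdot(-1)$; wait --- since $X\in\fu(r)$ is anti-Hermitian, $X^2$ has trace $\tr(X^2)=-\tr(X^*X)=-\|X\|^2<0$, so I should instead take $b_{x_0}(w_0):=+a_{x_0}(v_0)$, giving integrand value $-\tr(X\cdot X)=\|X\|^2>0$. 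Multiplying $b$ by an orientation-consistent bump function then makes $\kappa(a\wedge b)=f\cdot\vol_{\Si_g}$ with $f\geq 0$ and $f(x_0)>0$, whence $\w_A(a,b)=\int_{\Si_g}f\,\vol_{\Si_g}>0$, contradicting $\w_A(a,b)=0$. This establishes non-degeneracy.

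The main obstacle --- more a bookkeeping point than a deep difficulty --- is making the pointwise construction of $b$ rigorous: one must pass to a local $h$-unitary frame (available by Proposition \ref{reduction_of_structure_group}) in which $\fu(E,h)$ becomes the trivial bundle with fibre $\fu(r)$, express $a$ locally as a $\fu(r)$-valued $1$-form, pick the constant $\fu(r)$-valued $1$-form $b_{\mathrm{loc}}$ determined by $v_0\mapsto 0$, $w_0\mapsto X$ in suitable local coordinates adapted to the complex structure, and then globalise by multiplying with a smooth bump function supported in the trivialising chart; one checks that the resulting $b$ is a legitimate element of $V$ and that $\kappa(a\wedge b)$ is pointwise a non-negative multiple of the volume form, strictly positive at $x_0$. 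A secondary subtlety worth a sentence is the standing caveat from the excerpt that, strictly speaking, one should work in an appropriate Sobolev completion of $\cA(E,h)$; but since smooth compactly supported sections are dense and lie in every such completion, the non-degeneracy argument goes through unchanged, and I will simply remark on this rather than belabour it.
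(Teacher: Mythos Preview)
Your argument is essentially correct, but it takes a more laborious route than the paper's and contains one unjustified step. The paper proves non-degeneracy by choosing the \emph{global} test form $b=\ast a$ (the Hodge star of $a$): writing $a=\alpha\,dx+\beta\,dy$ locally, one has $\ast a=-\beta\,dx+\alpha\,dy$ and $\kappa(a\wedge\ast a)=\kappa(\alpha^2+\beta^2)\,dx\wedge dy\geq 0$, with equality iff $a=0$. Integrating gives $\w_A(a,\ast a)\geq 0$ with equality iff $a=0$. This is shorter than your bump-function construction, avoids any localisation, and has the bonus of introducing the Hodge star that immediately afterwards serves as the complex structure making $\cA(E,h)$ K\"ahler.

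Your version can be made to work, but your claim that ``$\kappa(a\wedge b)$ is pointwise a non-negative multiple of the volume form'' is not justified by your construction: you take $b_{\mathrm{loc}}$ to be the \emph{constant} $1$-form $v_0\mapsto 0$, $w_0\mapsto X=a_{x_0}(v_0)$, so at a nearby point $x$ the integrand is $-\tr\big(a_x(v_0)\cdot X\big)$, which need not be non-negative since $a_x(v_0)$ is not equal to $X$ away from $x_0$. The repair is easy --- either drop the pointwise non-negativity claim and argue instead by continuity of the integrand together with sufficiently small support of the bump function, or set $b_{\mathrm{loc}}=\ast a$ locally, which does give pointwise non-negativity and is precisely the paper's global choice. (Also, the mid-computation sign reversal ``wait ---'' should be cleaned out of a final write-up.)
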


\begin{proof}
$\w$ is obviously closed, since it is constant with respect to $A$. To show that it is non-degenerate, we use local coordinates. The tangent vectors $a$ and $b$ become $\fu(r)$-valued $1$-forms on an open subset $U\subset \Si_g$,
$$\begin{array}{rcl}
a & = & \alpha\, dx + \beta\, dy \\
b & = & \gamma\, dx + \delta\, dy
\end{array}
$$ with $\alpha, \beta, \gamma, \delta:U \lra \fu(r)$ smooth functions. If $a\in \ker\w_A$, then, for $b=\ast a:=-\beta\, dx + \alpha\, dy$, one has $$\kappa(a\wedge b)_{(x,y)}(v,w) = (\underbrace{\kappa(\alpha(x,y)^2 + \beta(x,y)^2)}_{\geq 0}) (v_1w_2-v_2w_1),$$ a positive multiple of the volume form (here we need the local coordinates $(x,y)$ to be appropriately chosen), so $$\int_{\Si_g} \kappa(a\wedge \ast a) \geq 0$$ and it is $0$ if and only if $\alpha=\beta=0$, i.e. $a=0$.
\end{proof}

\noindent Of course, there is some hidden meaning to this proof: the transformation

\begin{equation}\label{local_exp_of_the_Hodge_star}
\ast: \alpha\, dx + \beta\, dy \lmt -\beta\, dx + \alpha\, dy
\end{equation}

\noindent is the local expression of the \textit{Hodge star} on $\Om^1(\Si_g;\fu(E,h))$. It squares to minus the identity, so it is a complex structure on $\Om^1(\Si_g;\fu(E,h))$. But in fact, the Hodge star may be defined on all non-zero homogeneous forms on $\Si_g$: it sends $0$-forms to $2$-forms and vice versa, the two transformations being inverse to one another. Locally, one has $\ast (fdx) = fdy$, $\ast (fdy) = -fdx$, $\ast f = f dx\wedge dy$, and $\ast(f dx\wedge dy) = f$. More intrisically, since $\Si_g$ has a Riemannian metric and the fibres of $\fu(E,h)$ have a scalar product $\kappa$, the bundle $\bigwedge^k T^*\Si_g \otimes_{\R} \fu(E,h)$ has a Riemannian metric $\pi$, say. If $a,$ are two $\fu(E,h)$-valued $k$-forms on $\Si_g$, i.e. two sections of $\bigwedge^kT^*\Si_g \otimes_{\R} \fu(E,h)$, then $\pi(a,b)$ is a smooth function on $\Si_g$. Now, if $\eta$ is an arbitrary $\fu(E,h)$-valued $k$-form on $\Si_g$, $\ast \eta$ is defined as the unique $\fu(E,h)$-valued $(2-k)$-form such that 

\begin{equation}\label{intrinsic_def_of_the_Hodge_star}
\kappa(\eta\wedge\ast\eta) = \pi(\eta,\eta)\, \vol_{\Si_g}
\end{equation}

\noindent as $2$-forms on $\Si_g$.

\begin{proposition}
Set, for all $a,b\in T_A\cA(E,h) \simeq \Om^1(\Si_g;\fu(E,h))$, $$(a\, |\, b)_{L^2} := \int_{\Si_g} \kappa(a\wedge\ast b) = \w_A(a,\ast b).$$ Then $(\cdot\, |\, \cdot)_{L^2}$ is a Riemannian metric on $\cA(E,h)$, called the $L^2$ metric. The Atiyah-Bott symplectic form $\w$, the complex structure $\ast$, and the metric $(\cdot\, |\, \cdot)_{L^2}$ turn $\cA(E,h)$ into a K\"ahler manifold.
\end{proposition}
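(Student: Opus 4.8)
The plan is to exploit the fact that $\cA(E,h)$ is an \emph{affine} space modelled on the fixed vector space $V:=\Om^1(\Si_g;\fu(E,h))$, so that $T_A\cA(E,h)=V$ canonically for every $A$ and each of the three structures $\w$, $\ast$, $(\cdot\,|\,\cdot)_{L^2}$ is \emph{constant in $A$}. Smoothness is then automatic, closedness of $\w$ was already noted in the proof of Proposition \ref{non_degeneracy}, and everything reduces to checking, at one tangent space $V$, three purely algebraic facts: that $(\cdot\,|\,\cdot)_{L^2}$ is a symmetric positive-definite bilinear form; that $\ast$ squares to $-\Id$ and is compatible with $\w$ and the metric; and that the resulting almost complex structure is integrable — the last being automatic, since a constant almost complex structure on an affine space has vanishing Nijenhuis tensor.

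First I would handle $(\cdot\,|\,\cdot)_{L^2}$. Bilinearity is clear from the definition. For symmetry I would polarise the defining identity \eqref{intrinsic_def_of_the_Hodge_star}, obtaining $\kappa(a\wedge\ast b)=\pi(a,b)\,\vol_{\Si_g}$, which is symmetric in $(a,b)$ because the fibre metric $\pi$ is; equivalently, in an oriented orthonormal coframe $(dx,dy)$, writing $a=\alpha\,dx+\beta\,dy$ and $b=\gamma\,dx+\delta\,dy$, one computes $\kappa(a\wedge\ast b)=\big(\kappa(\alpha,\gamma)+\kappa(\beta,\delta)\big)\,dx\wedge dy$. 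Positive-definiteness is exactly the computation already performed in the proof of Proposition \ref{non_degeneracy}: $(a\,|\,a)_{L^2}=\int_{\Si_g}\pi(a,a)\,\vol_{\Si_g}\geq 0$, with equality only when $a\equiv 0$, by continuity of $\pi(a,a)$ and compactness of $\Si_g$. This yields the Riemannian metric.

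Next I would deal with $\ast$ and compatibility. On $1$-forms over a surface the local expression \eqref{local_exp_of_the_Hodge_star} gives $\ast^2=-\Id$, so $\ast$ is an almost complex structure on $\cA(E,h)$; being constant in $A$, it is integrable. From the local formula above one reads off at once $\w_A(a,b)=(\ast a\,|\,b)_{L^2}$ and $(\ast a\,|\,\ast b)_{L^2}=(a\,|\,b)_{L^2}$, so $\ast$ is an isometry of the $L^2$ metric and $(\w,\ast,(\cdot\,|\,\cdot)_{L^2})$ is a compatible triple. To conclude that this is a K\"ahler structure, the cleanest route is to observe that, since $(\cdot\,|\,\cdot)_{L^2}$ is the constant (flat) metric on the affine space $\cA(E,h)$, its Levi-Civita connection is the canonical flat connection of $\cA(E,h)$; with respect to that connection $\ast$ is parallel (being constant), so $\nabla\ast=0$, which is one of the standard equivalent definitions of a K\"ahler manifold (and $d\w=0$ is already known).

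The hard part, such as it is, is purely bookkeeping: keeping the Hodge-star sign conventions consistent across \eqref{local_exp_of_the_Hodge_star} and \eqref{intrinsic_def_of_the_Hodge_star}, and remembering to choose the local coordinates so that $(dx,dy)$ is an oriented orthonormal coframe, exactly as was tacitly done in the proof of Proposition \ref{non_degeneracy}. One should also bear in mind that, in infinite dimensions, $(\cdot\,|\,\cdot)_{L^2}$ is only a \emph{weak} Riemannian metric and that ``K\"ahler manifold'' is to be understood modulo the Sobolev-completion subtleties mentioned earlier; these analytic points do not interfere with the formal computation above.
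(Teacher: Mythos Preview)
Your argument is correct and follows essentially the same route as the paper. The paper's proof is terser: it records $(a\,|\,b)_{L^2}=\int_{\Si_g}\pi(a,b)\,\vol_{\Si_g}$ via \eqref{intrinsic_def_of_the_Hodge_star}, cites Proposition~\ref{non_degeneracy} for positive-definiteness, notes $\|\ast a\|_{L^2}=\|a\|_{L^2}$, and then declares that ``the rest is the definition of a K\"ahler manifold''. You fill in exactly the points the paper leaves implicit---symmetry via polarisation, $\ast^2=-\Id$, and integrability of the constant almost complex structure (equivalently $\nabla\ast=0$ for the flat affine connection)---so your write-up is a fleshed-out version of the same proof rather than a different one.
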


\begin{proof}
Note that $(a\, |\, b)_{L^2}= \int_{\Si_g} \pi(a,b)\, \vol_{\Si_g}$. The equality with the expression in the statement of the Proposition follows from \ref{intrinsic_def_of_the_Hodge_star}. The fact that $(\cdot\, |\, \cdot)_{L^2}$ is positive-definite has been proved in Proposition \ref{non_degeneracy}. Moreover, it is clear from either of expressions \ref{local_exp_of_the_Hodge_star} or \ref{intrinsic_def_of_the_Hodge_star}, that $\|\ast a\|_{L^2} = \|a\|_{L^2}$. The rest is the definition of a K\"ahler manifold (see for instance \cite{McDuff}).
\end{proof}

Recall now that the gauge group $\cG_h=\Gamma(\U(E,h))$ of unitary transformations of $(E,h)$ acts on $\cA(E,h)$ via $$u\cdot A = A -(d_Au)u^{-1}.$$

\begin{proposition}[Infinitesimal gauge action]\label{infinitesimal_action}
The fundamental vector field $$\xi^{\#}_A = \frac{d}{dt}|_{t=0} \left(\exp(t\xi)\cdot A\right)$$ associated to the element $\xi$ of the Lie algebra $\Om^0(\Si_g;\fu(E,h))\simeq Lie(\cG_h)$ is $$\xi^{\#}_A = -d_A\xi \in \Om^1(\Si_g;\fu(E,h))\, .$$
\end{proposition}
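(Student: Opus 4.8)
The plan is simply to differentiate the explicit formula $u\cdot A = A - (d_Au)u^{-1}$ for the $\cG_h$-action, recorded just before the statement, along the one-parameter subgroup $t\mapsto\exp(t\xi)$ of $\cG_h$.

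First I would set $u_t := \exp(t\xi)$, the fibrewise exponential of $\xi\in\Om^0(\Si_g;\fu(E,h))$, which is a smooth path in $\cG_h$ with $u_0=\Id$ and $\frac{d}{dt}\big|_{t=0}u_t=\xi$; differentiating the identity $u_tu_t^{-1}=\Id$ then gives $\frac{d}{dt}\big|_{t=0}u_t^{-1}=-\xi$ as well. Throughout, $u_t$ is viewed as a section of $\End(E)$, and $d_Au_t$ denotes its covariant derivative for the connection induced by $d_A$ on $\End(E)$ in the sense of the discussion at the end of Section \ref{Dolbeault_op}; in particular $d_A\Id=0$, since $(d_A\Id)(s)=d_A(\Id\,s)-\Id(d_As)=0$ for every $s\in\Om^0(\Si_g;E)$.

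Then, applying $\frac{d}{dt}\big|_{t=0}$ to $u_t\cdot A = A - (d_Au_t)u_t^{-1}$ and using the Leibniz rule for the derivative of the product $(d_Au_t)u_t^{-1}$ of two $t$-dependent quantities,
$$\xi^{\#}_A = \frac{d}{dt}\Big|_{t=0}(u_t\cdot A) = -\Big((d_A\xi)\,\Id + (d_A\Id)(-\xi)\Big) = -d_A\xi,$$
the cross-term vanishing because $d_A\Id=0$. Finally, since $d_A$ is a unitary connection, the connection it induces on $\End(E)$ preserves the sub-bundle $\fu(E,h)$ of anti-Hermitian endomorphisms, so $d_A\xi$ indeed lies in $\Om^1(\Si_g;\fu(E,h))$, as claimed.

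As for difficulties, there is essentially no obstacle of substance here: the computation is elementary once the formula $u\cdot A = A-(d_Au)u^{-1}$ is available, and the only points demanding care are notational, namely distinguishing the operator $d_A$ on $E$ from its extension to $\End(E)$ and recording the vanishing $d_A\Id=0$ that kills the cross-term in the Leibniz expansion. The usual caveat about passing to suitable Sobolev completions, already flagged before Proposition \ref{non_degeneracy}, applies but is irrelevant to this formal identity.
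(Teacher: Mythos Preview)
Your proof is correct. The paper proceeds instead by passing to local coordinates: writing $A$ locally as $d+a$ and using that the induced connection on $\End(E)$ is locally $d+[a,\cdot]$ (Exercise \ref{local_expression_of_the_covariant_derivarive_of_an_endom}), it computes the local form of $u\cdot A$ as $d-(du)u^{-1}+uau^{-1}$, sets $u=\exp(t\xi)$, and differentiates at $t=0$ to get $-d\xi-[a,\xi]$, which is the local expression of $-d_A\xi$. Your argument is the intrinsic version of the same computation: you differentiate the global formula $u\cdot A=A-(d_Au)u^{-1}$ directly, using $d_A\Id=0$ to kill the cross-term and implicitly using that $\frac{d}{dt}$ commutes with the spatial operator $d_A$. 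The local approach has the minor advantage of making the identification of $-d_A\xi$ as an element of $\Om^1(\Si_g;\fu(E,h))$ transparent (the local form $-d\xi-[a,\xi]$ is visibly $\fu(r)$-valued when $a$ and $\xi$ are), whereas you handle this with a separate remark about $d_A$ preserving $\fu(E,h)$; conversely, your argument is coordinate-free and slightly quicker.
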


\begin{proof}
In local coordinates, $A$ is of the form $d+a$, where $a$ is a $\fu(r)$-valued $1$-form defined on an open subset $U\subset \Si_g$, $u$ is a smooth map $U\lra\U(r)$, and $d_Au$ acts on endomorphism of $E|_U$ by $d+[a,\cdot]$ (see Exercise \ref{local_expression_of_the_covariant_derivarive_of_an_endom}). So $u\cdot A$ is of the form

\begin{eqnarray*}
(d+a) - (du+[a,u])u^{-1} & = & d+a - (du)u^{-1} - (au -ua)u^{-1} \\
& = & d - (du)u^{-1} + uau^{-1}.
\end{eqnarray*}

\noindent Setting $u=\exp(t\xi)$ and taking the derivative at $t=0$ of $-(du)u^{-1} + uau^{-1}$, we obtain $$-d\xi +\xi a -a\xi = -d\xi - [a,\xi],$$ which is the local expression of $-d_A\xi$.
\end{proof}

\begin{proposition}
The action of $\cG_h$ on $\cA(E,h)$ preserves the Atiyah-Bott symplectic form and the $L^2$ metric on $\cA(E,h)$.
\end{proposition}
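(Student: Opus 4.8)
The plan is to reduce the invariance of both tensors to the cyclic invariance of the trace. Fix $u\in\cG_h$ and write $\Phi_u\colon\cA(E,h)\lra\cA(E,h)$, $\Phi_u(A)=u\cdot A=A-(d_Au)u^{-1}$, for the induced diffeomorphism. Since the action is affine, the derivative $(d\Phi_u)_A\colon T_A\cA(E,h)\lra T_{u\cdot A}\cA(E,h)$ is a well-defined linear isomorphism between two copies of $\Om^1(\Si_g;\fu(E,h))$, and the first step is to compute it. Along the path $A_t=A+ta$ with $a\in\Om^1(\Si_g;\fu(E,h))$, the connection induced on $\End(E)$ changes by $t[a,\,\cdot\,]$ (as recalled in the proof of Proposition~\ref{infinitesimal_action}), so $d_{A_t}u=d_Au+t[a,u]$ and
$$\Phi_u(A_t)=A_t-(d_{A_t}u)u^{-1}=\Phi_u(A)+t\big(a-[a,u]u^{-1}\big)=\Phi_u(A)+t\,uau^{-1},$$
because $[a,u]u^{-1}=(au-ua)u^{-1}=a-uau^{-1}$. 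Hence $(d\Phi_u)_A$ is the pointwise conjugation $a\longmapsto uau^{-1}$, i.e. the $\fu(E,h)$-valued $1$-form whose value at $v\in T_x\Si_g$ is $u(x)\,a_x(v)\,u(x)^{-1}\in\fu(E,h)_x$; this is again an anti-Hermitian-endomorphism-valued form precisely because $u$ is a section of the group bundle $\U(E,h)$.

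Next I would record two elementary facts. First, for each $x\in\Si_g$ the operator $u(x)$ is unitary, so conjugation by $u(x)$ preserves $\fu(E,h)_x$ and, by cyclicity of the trace, preserves the inner product $\kappa=-\tr$ on its fibres; consequently
$$\kappa\big((uau^{-1})\wedge(ubu^{-1})\big)=\kappa(a\wedge b)$$
as $\R$-valued $2$-forms on $\Si_g$, for all $a,b\in\Om^1(\Si_g;\fu(E,h))$. Second, the Hodge star on $\fu(E,h)$-valued forms acts only on the exterior-form factor and leaves the coefficient bundle untouched (locally $\ast(\alpha\,dx+\beta\,dy)=-\beta\,dx+\alpha\,dy$ with $\alpha,\beta$ valued in $\fu(E,h)$, see \eqref{local_exp_of_the_Hodge_star}; or intrinsically via \eqref{intrinsic_def_of_the_Hodge_star}, since the metric $\pi$ there is assembled from the Riemannian metric of $\Si_g$ and the $\Ad{\U(r)}$-invariant metric $\kappa$), hence it commutes with pointwise conjugation: $\ast(uau^{-1})=u(\ast a)u^{-1}$.

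The conclusion is then a one-line substitution in each case. The pullback of $\w$ by $\Phi_u$ at $A$ is
$$(\Phi_u^*\w)_A(a,b)=\w_{u\cdot A}\big((d\Phi_u)_Aa,(d\Phi_u)_Ab\big)=\int_{\Si_g}\kappa\big((uau^{-1})\wedge(ubu^{-1})\big)=\int_{\Si_g}\kappa(a\wedge b)=\w_A(a,b),$$
so $\Phi_u^*\w=\w$; likewise, using in addition that $\ast$ commutes with conjugation by $u$,
$$(uau^{-1}\,|\,ubu^{-1})_{L^2}=\int_{\Si_g}\kappa\big((uau^{-1})\wedge\ast(ubu^{-1})\big)=\int_{\Si_g}\kappa\big((uau^{-1})\wedge(u(\ast b)u^{-1})\big)=\int_{\Si_g}\kappa(a\wedge\ast b)=(a\,|\,b)_{L^2},$$
which says that $(d\Phi_u)_A$ is an isometry for the $L^2$ inner product, i.e. $\Phi_u$ preserves the $L^2$ metric. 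The step that needs genuine care---and the only real obstacle---is the first one: pinning down the identity $d_{A+ta}u=d_Au+t[a,u]$ and checking that conjugation by $u\in\cG_h$ sends $\fu(E,h)$-valued forms to $\fu(E,h)$-valued forms. Everything afterwards is cyclicity of the trace and the fact that the Hodge star ignores the coefficient bundle; in particular, the second elementary fact above says exactly that $\Phi_u$ also preserves the complex structure $\ast$, so $\cG_h$ acts on $\cA(E,h)$ by Kähler isometries.
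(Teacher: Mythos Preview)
Your proof is correct and follows essentially the same route as the paper: compute the tangent map of the $\cG_h$-action as $a\mapsto uau^{-1}$, use $\mathrm{Ad}$-invariance of $\kappa=-\tr$ to get $u^*\w=\w$, and then use that conjugation by $u$ commutes with the Hodge star (equivalently, that the action is $\C$-linear, which the paper relegates to Exercise~\ref{C_linear_action}) to conclude invariance of the $L^2$ metric. You are simply more explicit than the paper in deriving the tangent map and in spelling out why $\ast(uau^{-1})=u(\ast a)u^{-1}$.
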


\begin{proof}
The tangent map to the action of $u\in\cG_h$ on $\cA(E,h)$ is the map

$$\begin{array}{ccc}
\Om^1(\Si_g;\fu(E,h)) & \lra & \Om^1(\Si_g;\fu(E,h)) \\
a & \lmt & uau^{-1}
\end{array}$$

\noindent so, since $\kappa=-\tr$ is $\mathrm{Ad}$-invariant on $\fu(r)\otimes\fu(r)$, $$\kappa\big((uau^{-1}) \wedge (ubu^{-1})\big) = \kappa \big(a\wedge b\big)$$ and therefore $u^*\w=\w$. Since the action is also $\C$-linear (see Exercise \ref{C_linear_action}), it is an isometry of the $L^2$ metric.
\end{proof}

\noindent Since we have a symplectic action of a Lie group $\cG_h$ on a symplectic manifold $(\cA(E,h),\w)$ (albeit both infinite-dimensional), it makes sense to ask whether this action is Hamiltonian and, more importantly, find the momentum map. %(SAY MORE). 
To identify a possible momentum map, we need to make $(Lie(\cG_H))$ more explicit.

\begin{proposition}\label{identification_of_dual_of_Lie_algebra}
The map 

$$\begin{array}{ccc}
\Om^2(\Si_g;\fu(E,h)) & \lra & (Lie(\cG_h))^* \\
R & \lmt & (\xi \lmt \int_{\Si_g} \kappa(\xi\otimes R)
\end{array}$$

\noindent is an isomorphism of vector spaces which is $\cG_h$-equivariant with respect to the action $u\cdot R := \mathrm{Ad}_{u} \circ R$ on $\Om^2(\Si_g;\fu(E,h))$ and the co-adjoint action on $(Lie(\cG_h))^*$.
\end{proposition}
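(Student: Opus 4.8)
The plan is to establish the isomorphism in three stages: first a pointwise (fibrewise) duality statement, then the global statement obtained by integration against the volume form, and finally the equivariance check. Pointwise, the inner product $\kappa = -\tr$ on the typical fibre $\fu(r)$ is positive definite, hence nondegenerate, so it induces an isomorphism $\fu(r) \overset{\simeq}{\lra} \fu(r)^*$, $Y \lmt \kappa(\,\cdot\,, Y)$; bundling this over $\Si_g$ gives an isomorphism $\fu(E,h) \simeq \fu(E,h)^*$ of vector bundles. Since $\Si_g$ is a compact oriented surface with its normalised volume form, integration identifies $\Om^2(\Si_g;\C)$-type data with functionals on $\Om^0$-type data: concretely, a $2$-form $R$ with values in $\fu(E,h)$ pairs with a $0$-form $\xi$ with values in $\fu(E,h)$ via $\kappa(\xi \otimes R)$, which is an $\R$-valued $2$-form on $\Si_g$, and $\int_{\Si_g}\kappa(\xi\otimes R)\in\R$. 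So the map in the statement is well-defined and linear in $R$.

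Next I would argue that this map is a linear isomorphism onto $(Lie(\cG_h))^*$. Here one has to be slightly careful about what ``$(Lie(\cG_h))^*$'' means in the infinite-dimensional setting: as in Atiyah--Bott, one works with the \emph{smooth dual}, i.e. the space of continuous linear functionals on $\Om^0(\Si_g;\fu(E,h))$ represented by smooth densities, equivalently (via the metric and volume form) by smooth sections of $\fu(E,h)^*$, equivalently by smooth $\fu(E,h)$-valued $2$-forms. Injectivity is immediate: if $\int_{\Si_g}\kappa(\xi\otimes R)=0$ for all $\xi\in\Om^0(\Si_g;\fu(E,h))$, then choosing $\xi$ supported near an arbitrary point and equal to the section $\ast$-dual to $R$ there (so that $\kappa(\xi\otimes R)$ is a nonnegative multiple of $\vol_{\Si_g}$, by the same positivity computation as in the proof of Proposition \ref{non_degeneracy}) forces $R=0$. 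Surjectivity onto the smooth dual is exactly the statement that every such functional is represented by a smooth $2$-form, which is the standard $L^2$/Hodge-theoretic identification on the compact surface $\Si_g$ (the pairing $(\xi,R)\mapsto\int_{\Si_g}\kappa(\xi\otimes R)$ being, up to the Hodge star, the $L^2$ inner product already introduced). I would simply invoke this, referring to \cite{AB} for the functional-analytic details, consistently with the paper's stated policy of not belabouring the Sobolev completions.

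Finally, the equivariance. Given $u\in\cG_h$, acting on $R$ by $u\cdot R = \Ad{u}\circ R$ and on $\xi$ by $u\cdot\xi = \Ad{u}\,\xi$, the $\Ad$-invariance of $\kappa=-\tr$ on $\fu(r)\otimes\fu(r)$ (already used in the proof that $u^*\w=\w$) gives pointwise $\kappa\big((\Ad{u}\,\xi)\otimes(\Ad{u}\circ R)\big)=\kappa(\xi\otimes R)$, hence $\int_{\Si_g}\kappa\big((u\cdot\xi)\otimes(u\cdot R)\big)=\int_{\Si_g}\kappa(\xi\otimes R)$; rearranging this is precisely the statement that the functional attached to $u\cdot R$ is the coadjoint transform of the functional attached to $R$. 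I expect the only real subtlety — ``the main obstacle'' — to be the surjectivity/identification-of-the-dual step, i.e. pinning down which topological dual of the infinite-dimensional Lie algebra one means and citing the correct elliptic-regularity statement; everything else is pointwise linear algebra with $\kappa$ plus a one-line positivity argument borrowed from Proposition \ref{non_degeneracy}.
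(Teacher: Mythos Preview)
Your proof is correct and follows essentially the same approach as the paper: the $L^2$-type pairing via $\kappa$, the Hodge star linking $\Om^2$ and $\Om^0$, and $\mathrm{Ad}$-invariance of $\kappa$ for equivariance. The paper's argument is in fact terser than yours---it simply factors the map as the Hodge star $\ast:\Om^2(\Si_g;\fu(E,h))\to\Om^0(\Si_g;\fu(E,h))$ followed by the self-duality of $\Om^0$ under the Riemannian metric $(\lambda,\mu)\mapsto\int_{\Si_g}\kappa(\lambda\wedge\ast\mu)$---and does not pause over the smooth-dual issue you (rightly) flag, consistent with its declared policy of suppressing the functional-analytic details.
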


\begin{proof}
The Lie algebra of $\cG_h$ is $\Om^0(\Si_g;\fu(E,h))$. It carries a Riemannian metric $$(\lambda,\mu) \lmt \int_{\Si_g} \kappa(\lambda\wedge \ast\mu)$$ which canonically identifies it with its dual. Then, the Hodge star establishes an isomorphism $$\ast:\Om^2(\Si_g;\fu(E,h)) \lra \Om^0(\Si_g;\fu(E,h))\, .$$ The statement on the action follows from the $\Ad{}$-invariance of $\kappa$.
\end{proof}

\noindent Now, there is a natural map from $\cA(E,h)$ to $\Om^2(\Si_g;\fu(E,h))$, namely the map taking a unitary connection $A$ to its curvature $F_A$, which we now define.

\begin{proposition}\label{def_curvature}
A unitary connection $$d_A: \Om^0(\Si_g;E) \lra \Om^1(\Si_g;E)$$ on $(E,h)$ uniquely extends to an operator $$d_A: \Om^k(\Si_g;E) \lra \Om^{k+1} (\Si_g;E)$$ satisfying the generalised Leibniz rule $$d_A(\beta \wedge \si) = (d\beta) \wedge \si + (-1)^{\deg{\beta}} \beta\wedge d_A\si$$ for all $\beta \in \Om^j(\Si_g;\C)$ and all $\si\in\Om^k(\Si_g;E)$. The operator $$d_A\circ d_A:\Om^0(\Si_g;E) \lra \Om^2(\Si_g;E)$$ is $C^{\infty}(\Si_g;\C)$-linear, so it defines an element $F_A \in \Om^2(\Si_g;\fu(E,h))$ called the \textbf{curvature} of $A$. It satisfies $$F_{u\cdot A} = \mathrm{Ad}_{u} \circ F_A = uF_Au^{-1}$$ for all $u\in\cG_h$. Moreover, if the local expression of $A$ is $d+a$, the local expression of $F_A$ is $da+\frac{1}{2} [a,a]$.
\end{proposition}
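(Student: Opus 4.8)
The plan is to establish the four assertions in turn: the extension of $d_A$ to $E$-valued forms, the tensoriality of $d_A\circ d_A$, the gauge transformation law, and the local formula. For the \textbf{extension}, I would argue locally: on a trivialising open set $U$, any section of $\Om^k(\Si_g;E)$ is a finite sum $\si = \sum_j \beta_j\otimes e_j$ with $\beta_j\in\Om^k(U;\C)$ and $e_j\in\Om^0(U;E)$, and the generalised Leibniz rule forces $d_A(\beta_j\otimes e_j) = (d\beta_j)\otimes e_j + (-1)^{\deg{\beta_j}}\beta_j\wedge d_A e_j$; one checks this is independent of the chosen decomposition (because on overlaps the transition laws for connection forms are exactly designed to make it so, cf.\ \eqref{connection_gluing}) and that it glues to a global operator. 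Uniqueness is immediate from the Leibniz rule.

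Next, for the \textbf{curvature as a tensor}, I would compute $d_A(d_A(fs))$ for $f\in C^\infty(\Si_g;\C)$ and $s\in\Om^0(\Si_g;E)$ using the Leibniz rules twice:
\begin{align*}
d_A\big(d_A(fs)\big) &= d_A\big((df)s + f\,d_As\big) \\
&= (d^2 f)s - (df)\wedge d_As + (df)\wedge d_A s + f\,d_A(d_As) \\
&= f\, d_A(d_As)
\end{align*}
using $d^2 = 0$ and the sign $(-1)^{\deg{(df)}} = -1$. This shows $d_A\circ d_A$ is $C^\infty(\Si_g;\C)$-linear on $\Om^0(\Si_g;E)$, hence given by a global section of $\Hom(E;\wedge^2 T^*\Si_g\otimes_\R E) = \Om^2(\Si_g;\End(E))$. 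That it actually lands in the anti-Hermitian subbundle $\fu(E,h)$ follows by differentiating the unitarity identity $d(h(s,s')) = h(d_As,s') + h(s,d_As')$ once more: applying the extended Leibniz rule and $d^2 = 0$ to $h(s,s')$ yields $h(F_As,s') + h(s,F_As') = 0$, i.e.\ $F_A^* = -F_A$.

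For the \textbf{gauge equivariance}, since $F_A$ is the section of $\End(E)$-valued $2$-forms given by $s\mapsto d_A(d_As)$, and $(u\cdot d_A)(s) = u(d_A(u^{-1}s))$, I would compute $(d_{u\cdot A})^2 s = u\,d_A\big(d_A(u^{-1}s)\big) = u\,F_A(u^{-1}s)$, where the first equality uses that conjugation by $u$ commutes with the extended $d_A$ applied to $E$-valued forms (this is the same bookkeeping as for the defining action, extended degree by degree). Hence $F_{u\cdot A} = \Ad{u}\circ F_A = uF_Au^{-1}$. Finally, for the \textbf{local formula}, in a trivialisation where $d_A = d + a$ with $a\in\Om^1(U;\fu(r))$, I would apply $d_A$ twice to a local section $s_U$: $d_A s_U = ds_U + a\,s_U$, then $d_A(ds_U + a s_U) = d(ds_U) + d(as_U) + a\wedge(ds_U + a s_U) = (da)s_U - a\wedge ds_U + a\wedge ds_U + (a\wedge a)s_U = \big(da + a\wedge a\big)s_U$, and one rewrites $a\wedge a = \tfrac12[a,a]$ for matrix-valued $1$-forms (where $[a,a](v,w) = [a(v),a(w)] - [a(w),a(v)] = 2[a(v),a(w)]$, matching $(a\wedge a)(v,w) = a(v)a(w) - a(w)a(v)$). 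I expect the main obstacle to be purely notational rather than conceptual: being careful with the signs coming from $(-1)^{\deg{\beta}}$ in the generalised Leibniz rule and with the conventions for the wedge product and bracket of Lie-algebra-valued forms, so that the factor $\tfrac12$ comes out correctly. None of the steps is deep; the content is that $d^2 = 0$ on functions upgrades to the fact that the failure of $d_A^2 = 0$ is $C^\infty$-linear, i.e.\ tensorial.
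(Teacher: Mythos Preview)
The paper states this proposition without proof, so there is no argument to compare against; your proposal supplies exactly the standard verification that the paper omits. Each of your four steps (extension via the forced Leibniz rule on local decompositions, $C^\infty$-linearity of $d_A\circ d_A$ by the cancellation $-(df)\wedge d_As + (df)\wedge d_As$, anti-Hermiticity from differentiating the unitarity identity, gauge equivariance from $(u\cdot d_A)^2 = u\circ d_A^2\circ u^{-1}$, and the local computation yielding $da + a\wedge a = da + \tfrac12[a,a]$) is correct and is the expected route. The only place to be slightly more careful is in the gauge-equivariance step: you should make explicit that the extension of $u\cdot d_A$ to higher-degree $E$-valued forms agrees with $u\circ (d_A)_{\mathrm{ext}}\circ u^{-1}$, which holds because $u$ is $C^\infty(\Si_g;\C)$-linear and acts only on the $E$ factor of $\beta\otimes e$, so it commutes with the wedge by scalar forms; once that is said, your computation of $F_{u\cdot A}$ goes through.
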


\noindent The following theorem is the main result of this subsection.

\begin{theorem}[Atiyah-Bott, \cite{AB}]\label{curvature_equals_momentum}
The curvature map $$F: \cA(E,h)\lra \Om^2(\Si_g;\fu(E,h))$$ is an equivariant momentum map for the gauge action of $\cG_h$ on $\cA(E,h)$.
\end{theorem}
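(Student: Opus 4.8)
The plan is to verify the defining property of an equivariant momentum map: for every $\xi\in Lie(\cG_h)=\Om^0(\Si_g;\fu(E,h))$, the function $\mu^\xi:\cA(E,h)\to\R$ obtained by pairing the curvature map $F$ with $\xi$ (via the identification of Proposition \ref{identification_of_dual_of_Lie_algebra}) satisfies $d\mu^\xi = \iota_{\xi^\#}\w$, together with $\cG_h$-equivariance of $F$. Concretely, $\mu^\xi(A) = \int_{\Si_g}\kappa(\xi\otimes F_A)$. Since $\cA(E,h)$ is an affine space modelled on $\Om^1(\Si_g;\fu(E,h))$, computing $d\mu^\xi$ at $A$ in the direction $a\in T_A\cA(E,h)$ amounts to differentiating $t\mapsto \int_{\Si_g}\kappa(\xi\otimes F_{A+ta})$ at $t=0$.

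First I would compute the linearisation of the curvature. Using the local expression $F_A = da + \tfrac12[a,a]$ from Proposition \ref{def_curvature} (where here $a$ denotes the local connection form of $A$), replacing $a$ by $a+tb$ and differentiating at $t=0$ gives the well-known formula $DF_A(b) = d_Ab$, the covariant exterior derivative of $b\in\Om^1(\Si_g;\fu(E,h))$ with respect to $A$; one checks this is globally well defined since the $[a,b]$-terms assemble into $d_A$ acting on $\fu(E,h)$-valued forms. Hence
$$
(d\mu^\xi)_A(b) = \int_{\Si_g}\kappa(\xi\otimes d_Ab).
$$
The key step is then an integration by parts: because $d_A$ is a unitary connection and $\kappa$ is $\Ad$-invariant, the operator $d_A$ is compatible with $\kappa$ in the sense that $d\,\kappa(\xi\otimes b) = \kappa(d_A\xi\otimes b) + \kappa(\xi\otimes d_Ab)$ (signs adjusted for form degrees). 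Since $\Si_g$ is compact without boundary, $\int_{\Si_g} d(\kappa(\xi\otimes b)) = 0$ by Stokes, so
$$
\int_{\Si_g}\kappa(\xi\otimes d_Ab) = -\int_{\Si_g}\kappa(d_A\xi\otimes b) = \int_{\Si_g}\kappa((-d_A\xi)\wedge b).
$$
By Proposition \ref{infinitesimal_action}, $\xi^\#_A = -d_A\xi$, so the right-hand side is exactly $\w_A(\xi^\#_A, b) = (\iota_{\xi^\#}\w)_A(b)$. This proves $d\mu^\xi = \iota_{\xi^\#}\w$ for all $\xi$, i.e. $F$ is a momentum map.

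Finally, equivariance is immediate from what is already established: Proposition \ref{def_curvature} gives $F_{u\cdot A} = \mathrm{Ad}_u\circ F_A$, and Proposition \ref{identification_of_dual_of_Lie_algebra} states precisely that the identification $\Om^2(\Si_g;\fu(E,h))\cong(Lie(\cG_h))^*$ intertwines the action $u\cdot R = \mathrm{Ad}_u\circ R$ with the coadjoint action; combining these shows $F$ is $\cG_h$-equivariant. I expect the main obstacle to be the sign-and-degree bookkeeping in the integration-by-parts identity for $\kappa(\xi\otimes d_Ab)$ — making sure the Leibniz rule for $d_A$ acting on the pairing $\kappa$ of a $0$-form with a $1$-form produces the correct sign so that the boundary term vanishes and the leftover term matches $\w_A(-d_A\xi, b)$ rather than its negative. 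Everything else reduces to assembling results proved earlier in the text.
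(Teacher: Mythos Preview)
Your proposal is correct and follows essentially the same route as the paper's proof: linearise the curvature to get $(dF)_A\cdot b = d_Ab$, then use Stokes' theorem on $d\big(\kappa(\xi\otimes b)\big)=\kappa(d_A\xi\wedge b)+\kappa(\xi\otimes d_Ab)$ to transfer the covariant derivative, and combine with $\xi^\#_A=-d_A\xi$ and the equivariance statements of Propositions \ref{def_curvature} and \ref{identification_of_dual_of_Lie_algebra}. The only cosmetic difference is that the paper invokes Lemma \ref{curvature_of_a_translate} for the linearisation of $F$ rather than computing it directly from the local expression, and your worry about signs is exactly the one delicate point the paper also isolates.
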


\noindent We shall need the following lemma to prove Theorem \ref{curvature_equals_momentum}.

\begin{lemma}\label{curvature_of_a_translate}
Let $A\in\cA(E,h)$ be a unitary connection and let $b\in\Om^1(\Si_g;\fu(E,h))$. Then $A+b$ is a unitary connection and $$F_{A+b} = F_A + d_A b + \frac{1}{2}[b,b].$$ 
\end{lemma}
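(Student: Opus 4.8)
The plan is to verify the statement by a direct local computation using the formula for curvature in terms of connection forms given in Proposition \ref{def_curvature}. First I would observe that $A+b$ is a unitary connection: since $\cA(E,h)$ is an affine space modelled on $\Om^1(\Si_g;\fu(E,h))$ (as recalled just before Proposition \ref{non_degeneracy}) and $b$ is precisely an element of that translation space, $A+b$ lies in $\cA(E,h)$ by definition; no real work is needed here. The substance of the lemma is the curvature formula.

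For that, I would pick a trivialising open set $U\subset\Si_g$ on which $A$ has local expression $d+a$ with $a\in\Om^1(U;\fu(r))$, so that, by Proposition \ref{def_curvature}, $F_A$ is locally $da+\tfrac12[a,a]$. The connection $A+b$ then has local expression $d+(a+b_U)$, where $b_U\in\Om^1(U;\fu(r))$ is the local form of $b$. Applying the same local formula for curvature gives
$$
F_{A+b}|_U = d(a+b_U) + \tfrac12[a+b_U,\, a+b_U].
$$
Expanding the bracket bilinearly and using $[a,b_U]=[b_U,a]$ for $\fu(r)$-valued $1$-forms (both are $2$-form-valued, and the sign from the form-degree and the sign from antisymmetry of the Lie bracket cancel), this becomes $da + \tfrac12[a,a] + db_U + [a,b_U] + \tfrac12[b_U,b_U]$. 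Recognising $da+\tfrac12[a,a]$ as the local expression of $F_A$ and $db_U+[a,b_U]$ as the local expression of $d_Ab$ (the covariant exterior derivative acts on $\fu(E,h)$-valued forms locally by $d+[a,\cdot]$, which is exactly the extension described in Proposition \ref{def_curvature} applied to the induced connection on $\fu(E,h)$), I obtain $F_{A+b}|_U = F_A|_U + (d_Ab)|_U + \tfrac12[b,b]|_U$ on every $U$, hence the global identity.

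The only genuinely delicate point is bookkeeping the signs when expanding $[a+b_U,a+b_U]$: one must be careful that for $\fu(r)$-valued $1$-forms $a$ and $b_U$ the graded-commutator convention makes the cross terms add rather than cancel, so that $\tfrac12[a+b_U,a+b_U] = \tfrac12[a,a] + [a,b_U] + \tfrac12[b_U,b_U]$, and that the identification of $db_U+[a,b_U]$ with the local expression of $d_Ab$ uses the correct convention for the induced connection on the endomorphism bundle (the same one fixed at the end of Section \ref{Dolbeault_op} and in Proposition \ref{def_curvature}). Once the conventions are pinned down, the computation is routine and purely local, and the passage from local identities to the global one is immediate because all three terms $F_A$, $d_Ab$, $[b,b]$ are globally well-defined $\fu(E,h)$-valued $2$-forms.
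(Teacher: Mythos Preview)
Your proof is correct and follows essentially the same approach as the paper: both argue that $A+b\in\cA(E,h)$ by the affine structure, then compute locally using $F_A=da+\tfrac12[a,a]$ and $d_Ab=db+[a,b]$ (the paper cites Exercise~\ref{local_expression_of_the_covariant_derivarive_of_an_endom} for the latter), expand $d(a+b)+\tfrac12[a+b,a+b]$, and regroup terms. Your additional remarks about the sign in the graded bracket and the passage from local to global are fine elaborations, but the argument is the same.
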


\begin{proof}
Since $\cA(E,h)$ is an affine space on $\Om^1(\Si_g;\fu(E,h))$, $A+b$ is a unitary connection. Let $d+a$ be the local expression of $A$, where $a\in\Om^1(U;\fu(r))$. Then the local expression of $F_A$ is $da+\frac{1}{2}[a,a]$, and the local expression of $d_A b$ is $db+[a,b]$ (as in Exercise \ref{local_expression_of_the_covariant_derivarive_of_an_endom}). Moreover, the local expression of $A+b$ is $d+(a+b)$, so the local expression of $F_{A+b}$ is
\begin{eqnarray*}
d(a+b) + \frac{1}{2} [a+b,a+b] & = & da + db + \frac{1}{2}[a,a] + [a,b] + \frac{1}{2}[b,b] \\
& = & (da + \frac{1}{2}[a,a]) + (db + [a,b]) + \frac{1}{2}[b,b]
\end{eqnarray*}
\noindent so indeed $$F_{A+b}= F_A + d_A b +\frac{1}{2}[b,b]\, .$$
\end{proof}

\begin{proof}[Proof of Theorem \ref{curvature_equals_momentum}]
The equivariance of $F$ follows from Proposition \ref{def_curvature}. It remains to show that $F$ is a momentum map for the gauge action, that is, for all $\xi\in Lie(\cG_h)=\Om^0(\Si_g;\fu(E,h))$ and all $A\in\cA(E,h)$, $$\w_A(\xi^{\#}_A,\,\cdot\,) = \big(d<F\,,\,\xi>)_A\big(\,\cdot\,\big)$$ as linear forms on $T_A\cA(E,h)\simeq\Om^1(\Si_g;\fu(E,h))$. By Proposition \ref{infinitesimal_action}, this is equivalent to the fact that, for all $\eta\in\Om^1(\Si_g:\fu(E,h))$, $$\int_{\Si_g}\kappa(-d_A\xi \wedge \eta) = < (dF)_A\cdot \eta\,,\,\xi>.$$ But, by Proposition \ref{curvature_of_a_translate}, $$F_{A+t\eta} = F_A + t d_A\eta + \frac{1}{2} t^2 [\eta,\eta]\, ,$$ so $$(dF)_A\cdot \eta= \frac{d}{dt}|_{t=0} F_{A+t\eta} = d_A\eta\, .$$ In other words, by Proposition \ref{identification_of_dual_of_Lie_algebra}, we want to show that 

\begin{equation}\label{momentum_relation}
-\int_{\Si_g} \kappa(d_A\xi\wedge\eta) = \int_{\Si_g} \kappa(\xi\otimes d_A\eta)\,.
\end{equation} 

\noindent But, since $\partial\Si_g=\emptyset$, one has $$\int_{\Si_g} d\big(\kappa(\xi\otimes\eta)\big)=0$$ on the one hand, and on the other hand, $$d\big(\kappa(\xi\otimes\eta)\big) = \kappa(d_A\xi \wedge \eta) + \kappa(\xi\otimes d_A\eta)\,$$ whence relation \eqref{momentum_relation}.
\end{proof}

\newpage

\subsection{Exercises}\hfill

\begin{multicols}{2}

\begin{exercise}
Show that any complex vector bundle over a smooth manifold admits a Hermitian metric (as usual, use local trivialisations and a partition of unity).
\end{exercise}

\begin{exercise}
Let $u$ be an endomorphism of a smooth Hermitian vector bundle $(E,h)$. Show that there exists a unique endomorphism $u^*$ of $E$ such that, for all $(s,s')\in \Ga(E)\times\Ga(E)$, $$h\big(u(s),s'\big) = h\big(s,u^*(s')\big).$$ $u^*$ is called the \textit{adjoint} of $u$. A Hermitian endomorphism is self-adjoint, and an anti-Hermitian one is anti-self-adjoint.
\end{exercise}

\begin{exercise}\label{proof_extension_of_unitary_gauge_action}
Show that, if $d_A\in\cA(E,h)$ and $g\in\cG_E$, then the quantity $g\cdot d_A$ defined by $$d_A - \big[(d_A^{\, 0,1}g)g^{-1} - \big((d_A^{\, 0,1}g)g^{-1}\big)^*\big]$$ is a unitary connection, and that this defines an action of $\cG_E$ on $\cA(E,h)$ making the isomorphism $$\cA(E,h) \simeq \Dol(E)$$ $\cG_E$-equivariant.
\end{exercise}

\begin{exercise}
Check that relation \eqref{intrinsic_def_of_the_Hodge_star} gives a well-defined $\R$-linear map $$\ast: \Om^k(\Si_g;\fu(E,h)) \lmt \Om^2(\Si_g;\fu(E,h))$$ satisfying $\ast^2 = (-1)^{k(2-k)} \Id$. Check that, in local coordinates, the map $\ast$ satisfies $$\ast(\alpha\, dx +\beta\, dy) = -\beta\,dx + \alpha\, dy\,.$$ How about $\ast(\lambda\, dz + \mu\, d\ov{z})$ ?
\end{exercise}

\begin{exercise}\label{C_linear_action}
Show that the tangent map to the self-diffeomorphism of $\cA(E,h)$ defined by the action of an element $u\in\cG_h$ is $\C$-linear with respect to the complex structure of $\cA(E,h)$ given on each tangent space $T_A\cA(E,h) \simeq \Om^1(\Si_g;\fu(E,h))$ by the Hodge star.
\end{exercise}

\begin{exercise}\label{local_expression_of_the_covariant_derivarive_of_an_endom}
Let $A$ be a linear connection on a vector bundle $E$, and let $s$ be a section of $E$. Show that if $A$ is locally of the form $$s\lmt ds + as$$ then the covariant derivative $d_Au$ of an endomorphism of $E$, defined at the end of Section \ref{Dolbeault_op} by $$(d_Au)s = d_A(u(s)) - u(d_As)\, ,$$ is locally of the form $$u\lmt du+[a,u].$$
\end{exercise}
\end{multicols}

\section{Moduli spaces of semi-stable vector bundles}\label{stable_bundles}

It is sometimes important, while thinking about mathematics, to have a guiding problem to help one organise one's thoughts. For us in these notes, it is the problem of classifying holomorphic vector bundles on a smooth, irreducible complex projective curve $\Si_g$ (=a compact connected Riemann surface of genus $g$). When the genus is $0$ or $1$, there are complete classification results for holomorphic vector bundles on $\Si_g$, due to Grothendieck for the case of the Riemann sphere (\cite{Grot_P1}), and to Atiyah for the case of elliptic curves (\cite{Atiyah_elliptic_curves}). There are no such classification results available for holomorphic vector bundles on a curve of genus $g> 1$. In such a situation, one generally hopes to replace the classification theorem by the construction of what is called a \textit{moduli space}, the geometry of which can subsequently be studied. Roughly speaking, a moduli space of holomorphic vector bundles is a complex quasi-projective variety which has isomorphism classes of vector bundles over a fixed base for points, and satisfies a universal property controlling the notion of holomorphic or algebraic family of such vector bundles. We shall not get into the formal aspects of the notion of a moduli space and we refer the interested reader to \cite{Gomez} instead. There are a few situations in which we know how to construct a moduli variety of vector bundles (that is, give a structure of complex quasi-projective variety to \textit{a certain set} of equivalence classes of vector bundles) and vector bundles on a smooth complex projective curve is one of those situations. The difficulty of a \textit{moduli problem} is to understand \textit{which set} one should try to endow with a structure of complex quasi-projective variety.

Common features of many moduli problems include:
\begin{enumerate}
\item Starting with a topological (or smooth) classification of the objects under study. This is typically obtained via \textit{discrete invariants} (for vector bundles on curves: the rank and the degree) and has the virtue of dividing the moduli problem into various, more tractable moduli problems for objects of a fixed topological type.
\item Getting rid of certain objects in order to get a moduli space that admits a structure of projective algebraic variety (=a closed subspace of a projective space), or at least quasi-projective (=an open subset of a projective variety). This is where continuous invariants, called moduli, enter the picture (moduli may be thought of as some sort of local coordinates on the would-be moduli space). It is usually a difficult problem to find moduli for a class of objects, and one solution has been to use Mumford's Geometric Invariant Theory (GIT, \cite{Mumford_GIT}) to decide which objects one should consider in order to get a nice moduli space (these objects are called \textit{semi-stable} objects).
\end{enumerate}

\noindent In fact, \textit{stable objects} exhibit even better properties in the sense that the moduli space is then typically an orbit space (also called a geometric quotient, as opposed to a categorical quotient in the semi-stable case, see for instance \cite{Thomas_GIT,Newstead_GIT}) admitting a structure of  quasi-projective variety. GIT really is a way of defining quotients in algebraic geometry, and it has been applied very successfully to the study of moduli problems (Mumford's original motivation indeed). We shall not say anything else about GIT in these notes, and focus on \textit{slope stability} for vector bundles on a curve only (it can be shown that this is in fact a GIT type of stability condition, see for instance \cite{Newstead_GIT}). Nor shall we say anything about moduli functors and their coarse/fine moduli spaces (the interested reader might consult, for instance, \cite{Mukai}).

\subsection{Moduli spaces of line bundles}

Let $\Si_g$ be a compact, connected Riemann surface of genus $g$. As a first step into the moduli problem for holomorphic vector bundles on $\Si_g$, let us recall that isomorphism classes of holomorphic \textit{line} bundles of degree $d$ on $\Si_g$ can be arranged into a smooth, projective variety called the Picard variety of degree $d$, and denoted $\Pic^d(\Si_g)$. A nice construction of $\Pic^d(\Si_g)$ can be given using sheaf cohomology. Some of the techniques we use below are not differential-geometric and this subsection can be skipped without harm if need be.

Denote $\O$ the sheaf of holomorphic functions on $\Si_g$ (local sections of $\O$ are holomorphic functions $f:U\lra\C$ defined on an open subset $U$ of $\Si_g$), and $\O^*$ the sheaf of nowhere vanishing holomorphic functions $(f:U\lra \C^*)$. Since $\C^*=\GL(1,\C)$, the first \v{C}ech cohomology group $\ceH(\Si_g,\O^*)$ is the set of isomorphism classes of holomorphic line bundles on $\Si_g$ (this set is a group because $\GL(1,\C)=\C^*$ is an Abelian group). But $\O^*$ fits into the following short exact sequence of sheaves $$0\lra \underline{\Z} \lra \O \underset{\exp}{\lra} \O^* \lra 1$$ (for any open subset $U\subset \Si_g$, the kernel of $\exp_U:f\lmt \exp(i2\pi f)$ consists of $\Z$-valued locally constant functions on $U$, and a nowhere vanishing holomorphic function $g:U\lra \C^*$ is \textit{locally} the exponential of a holomorphic function). There is a long exact sequence in cohomology associated to this short exact sequence (see for instance \cite{Griffiths-Harris}), which starts as follows.

$$\xymatrix{
0 \ar[r] & \ceH^0(\Si_g;\underline{\Z}) \ar[r] & \ceH^0(\Si_g;\O) \ar[r] & \ceH^0(\Si_g;\O^*)  \ar@{->} `r/8pt[d] `d/10pt[l] `^dl[ll] `^r/3pt[dll] [dll] \\
& \ceH^1(\Si_g;\underline{\Z}) \ar[r] & \ceH^1(\Si_g;\O) \ar[r] & \ceH^1(\Si_g;\O^*)  \ar@{->}`r/8pt[dll]`d/10pt[l] `^dl[ll] `^r/3pt[dll] [dll] \\
& \ceH^2(\Si_g;\underline{\Z}) \ar[r] & \ceH^2(\Si_g;\O)= 0
}
$$

\noindent The proof uses Dolbeault's Theorem (see for instance \cite{Griffiths-Harris}), which establishes the comparison theorem $\ceH^2(\Si;\mathcal{O}_{\Si}) \simeq H^{\, 0,2}_{\mathrm{Dol}}(\Si)$, between the \v{C}ech cohomology of $\Si$ with coefficients in the structure sheaf $\mathcal{O}_{\Si}$ and the cohomology of the complex of $(0,q)$-forms on $\Si$: a Riemann surface has complex dimension one, so all $(0,2)$-forms are zero (locally, one has $d\ov{z}\wedge d\ov{z}=0$) and therefore $H^{\, 0,2}_{\mathrm{Dol}}(\Si)=0$. There is also a comparison theorem for the \v{C}ech cohomology groups $\ceH^{k}(\Si_g;\underline{\Z})$, where the coefficient sheaf is a locally constant sheaf. It gives an isomorphism between these groups and the singular cohomology groups $H^{k}_{\mathrm{sing}}(\Si;\Z)$ (see for instance \cite{Bott-Tu}). One may combine this with the above to show that all the remaining terms in the cohomology long exact sequence are in fact zero. As for us, we use the comparison with topology to obtain that, since $\Si_g$ is connected, compact and orientable, $$\ceH^0(\Si_g;\underline{\Z}) = \Z,\ \ceH^1(\Si_g;\underline{\Z})=\Z^{2g},\ \mathrm{and}\ \ceH^2(\Si_g;\underline{\Z})=\Z.$$ So the first line in the exact sequence above simply is $$0\lra \Z \lra \C \underset{\exp}{\lra} \C^*.$$ Since the exponential is surjective and the whole sequence is exact, the rest of it writes $$0\lra \ceH^1(\Si_g;\underline{\Z}) \lra \ceH^1(\Si_g;\O) \lra \ceH^1(\Si_g;\O^*) \lra \ceH^2(\Si_g;\underline{\Z})\lra 0\, .$$ So we obtain the short exact sequence 

\begin{equation}\label{Jacobian_and_Picard}
0\lra \frac{\ceH^1(\Si_g;\O)}{\ceH^1(\Si_g;\underline{\Z})} \lra \ceH^1(\Si_g;\O^*) \lra \ceH^2(\Si_g;\underline{\Z}) \lra 0\, .
\end{equation}

\noindent Since moreover $\ceH^1(\Si_g;\O)\simeq \C^g$ (see for instance \cite{Griffiths-Harris}), we have in fact the following short exact sequence of group homomorphisms $$0\lra \C^g/\Z^{2g} \lra \ceH^1(\Si_g;\O^*) \lra \Z\lra 0\, .$$ 

\begin{definition}\label{def:Jacobian_and_Picard}
The Abelian group $$\frac{\ceH^1(\Si_g;\O)}{\ceH^1(\Si_g;\underline{\Z})}$$ is denoted $\Jac(\Si_g)$ and called the \textbf{Jacobian} of $\Si_g$. It is isomorphic as a group to $\C^g/\Z^{2g}$, and therefore has a natural structure of compact complex analytic manifold of complex dimension $g$. The group multiplication and inverse are analytic maps with respect to this structure, so $\Jac(\Si_g)$ is a compact, Abelian complex Lie group.

The Abelian group $\ceH^1(\Si_g;\O^*)$ is called the \textbf{Picard group} of $\Si_g$, denoted $\Pic(\Si_g)$. The group structure on it is induced by the tensor product of line bundles, the inverse of a line bundle being its dual.
\end{definition}

\noindent Finally, call \textit{degree} the surjective map $$\mathit{deg}:\Pic(\Si_g)\lra \Z$$ in the short exact sequence above. We are going to show that two holomorphic line bundles $\cL$ and $\cL'$ are topologically isomorphic if and only if $\deg\cL=\deg\cL'$. We denote $\cC_{\Si_g}$ the sheaf of continuous complex-valued functions on $\Si_g$, and $\cC_{\Si_g}^{\, *}$ the sheaf of nowhere vanishing such functions. As earlier, there is an exact sequence $$0\lra\underline{\Z} \lra \cC_{\Si_g} \underset{\exp}{\lra} \cC_{\Si_g}^{\, *} \lra 1\,.$$ The associated long exact sequence in cohomology starts as follows.

$$\xymatrix{
0 \ar[r] & \ceH^0(\Si_g;\underline{\Z}) \ar[r] & \ceH^0(\Si_g;\cC_{\Si_g}) \ar[r] & \ceH^0(\Si_g;\cC_{\Si_g}^{\, *})  \ar@{->} `r/8pt[d] `d/10pt[l] `^dl[ll] `^r/3pt[dll] [dll] \\
& \ceH^1(\Si_g;\underline{\Z}) \ar[r] & \ceH^1(\Si_g;\cC_{\Si_g}) \ar[r] & \ceH^1(\Si_g;\cC_{\Si_g}^{\, *})  \ar@{->}`r/8pt[dll]`d/10pt[l] `^dl[ll] `^r/3pt[dll] [dll] \\
& \ceH^2(\Si_g;\underline{\Z}) \ar[r] & \ceH^2(\Si_g;\cC_{\Si_g})\, .
}
$$

\noindent One notable difference with the holomorphic case is the fac that there exists non-constant continuous functions on $\Si_g$, so the exact sequence above does not start in the same way as in the holomorphic case (indeed, the map $\ceH^0(\Si_g;\cC_{\Si_g}) \lra \ceH^0(\Si_g;\cC_{\Si_g}^{\, *})$ is not surjective). But let us focus on the part of the exact sequence that most interests us, namely

\begin{equation}\label{deg_isom}
\ceH^1(\Si_g;\cC_{\Si_g}) \lra \ceH^1(\Si_g;\cC_{\Si_g}^{\, *})  \lra \ceH^2(\Si_g;\underline{\Z}) \lra \ceH^2(\Si_g;\cC_{\Si_g})\, .
\end{equation} 

\noindent Because of the existence of partitions of unity made up of continuous functions, the sheaf $\cC_{\Si_g}$ is what is called a \textit{fine} sheaf, and this implies (see for instance \cite{Griffiths-Harris}) that $$\ceH^k(\Si_g;\cC_{\Si_g}) = 0\ \mathrm{for\ all}\ k>0.$$ So, by the exactness of \eqref{deg_isom}, $$\ceH^1(\Si_g;\cC_{\Si_g}^{\, *}) \simeq \ceH^2(\Si_g;\underline{\Z})\simeq \Z\, ,$$ and we recall that $\ceH^1(\Si_g;\cC_{\Si_g}^{\, *})$ is the set of isomorphism classes of topological line bundles on $\Si_g$.

\begin{lemma}\label{top_classif_of_line_bundles}
Two holomorphic line bundles $\cL$ and $\cL'$ on a compact, connected Riemann surface $\Si_g$ are topologically isomorphic if and only if they have the same degree.
\end{lemma}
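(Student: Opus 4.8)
The plan is to use the commutative ladder relating the holomorphic and continuous exponential sequences of sheaves, together with the naturality of connecting homomorphisms in sheaf cohomology. Recall first that ``$\cL$ and $\cL'$ are topologically isomorphic'' means exactly that they define the same class in $\ceH^1(\Si_g;\cC_{\Si_g}^{\, *})$, and that forgetting the holomorphic structure of a holomorphic line bundle amounts, at the level of \v{C}ech cocycles of transition functions, to applying the map
$$
\ceH^1(\Si_g;\O^*) \lra \ceH^1(\Si_g;\cC_{\Si_g}^{\, *})
$$
induced by the inclusion of sheaves $\O^* \hookrightarrow \cC_{\Si_g}^{\, *}$. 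So the lemma is equivalent to the statement that two classes in $\ceH^1(\Si_g;\O^*)$ have the same image under this forgetful map if and only if they have the same degree.

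First I would note that the inclusions $\O \hookrightarrow \cC_{\Si_g}$ and $\O^* \hookrightarrow \cC_{\Si_g}^{\, *}$, together with the identity of $\underline{\Z}$, form a morphism between the holomorphic exponential short exact sequence and the continuous one. By naturality of the associated long exact sequences in cohomology, the two connecting homomorphisms with target $\ceH^2(\Si_g;\underline{\Z})$ fit into the commutative square
$$\begin{CD}
\ceH^1(\Si_g;\O^*) @>{\mathit{deg}}>> \ceH^2(\Si_g;\underline{\Z}) \\
@VVV @| \\
\ceH^1(\Si_g;\cC_{\Si_g}^{\, *}) @>{\simeq}>> \ceH^2(\Si_g;\underline{\Z})
\end{CD}$$
in which the left vertical arrow is the forgetful map above, the top arrow is the connecting homomorphism of the holomorphic exponential sequence --- which is by definition (after the identification $\ceH^2(\Si_g;\underline{\Z})\simeq\Z$) the degree map $\mathit{deg}:\Pic(\Si_g)\to\Z$ --- and the bottom arrow is the isomorphism $\ceH^1(\Si_g;\cC_{\Si_g}^{\, *})\simeq\ceH^2(\Si_g;\underline{\Z})$ established just before the lemma, using that $\cC_{\Si_g}$ is a fine sheaf. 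Commutativity of the square then says precisely that the forgetful map is the composite of $\mathit{deg}$ with the inverse of the bottom isomorphism.

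The conclusion is then immediate: since the bottom arrow is an isomorphism, $\cL$ and $\cL'$ have the same image in $\ceH^1(\Si_g;\cC_{\Si_g}^{\, *})$ --- i.e. are topologically isomorphic --- if and only if $\deg\cL = \deg\cL'$. The only step requiring genuine care, and hence the main (and rather modest) obstacle, is the commutativity of the square of connecting homomorphisms; this is the standard naturality of the long exact sequence in sheaf cohomology with respect to a morphism of short exact sequences. If one prefers an elementary verification, one can compute both connecting maps on \v{C}ech cochains --- lift a nowhere-vanishing cocycle $(g_{ij})$ by a local branch of $\frac{1}{2\pi i}\log$ and take the resulting integer $2$-cocycle --- and observe that a holomorphic local lift is in particular a continuous one, so the two $2$-cocycles obtained coincide. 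Finally, note that the ``only if'' direction is clear a priori, since the degree is a topological invariant (the integral of the first Chern class).
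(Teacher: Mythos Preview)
Your proof is correct and follows essentially the same approach as the paper: both use the morphism of exponential short exact sequences induced by the inclusion $\O \hookrightarrow \cC_{\Si_g}$, pass to the commutative ladder of long exact sequences, and read off the result from the square relating the degree map to the isomorphism $\ceH^1(\Si_g;\cC_{\Si_g}^{\, *}) \simeq \ceH^2(\Si_g;\underline{\Z})$. Your write-up is slightly more explicit about naturality and the \v{C}ech-level verification, but the argument is the same.
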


Recall that, in this subsection, the degree of a line bundle is the image of its isomorphism class under the degree map defined as above. But, the result of the lemma makes this map the degree in the sense of definition \ref{def_of_degree}.

\begin{proof}[Proof of lemma \ref{top_classif_of_line_bundles}]
Since there is a natural injective homomorphism of sheaves $\O\subset \cC_{\Si_g}$, we have a homomorphism of short exact sequences

$$\begin{CD}
0 @>>> \underline{\Z} @>>> \cC_{\Si_g} @>>\exp> \cC_{\Si_g}^{\, *} @>>> 1 \\
& & @|  @AAA @AAA \\
0 @>>> \underline{\Z} @>>> \O @>>\exp> \O^* @>>> 1
\end{CD}$$

\noindent whence we obtain the following commutative diagramme of group homomorphisms between the long exact sequences in cohomology:

$$\begin{CD}\label{proof_top_classif_of_line_bundles}
& & 0 @>>> \ceH^1(\Si_g;\cC_{\Si_g}^{\, *}) @>\simeq>> \ceH^2(\Si_g;\underline{\Z}) @>>> 0 \\
& & @AAA @AAA @| \\
0 @>>> \frac{\ceH^1(\Si_g;\O)}{\ceH^1(\Si_g;\underline{\Z})} @>>> \ceH^1(\Si_g;\O^*) @>>\mathit{deg}> \ceH^2(\Si_g;\underline{\Z}) @>>> 0
\end{CD}$$

\noindent In particular, the right square alone shows that two holomorphic line bundles $\cL$ and $\cL'$ are topologically isomorphic if and only if they have the same degree $d\in \ceH^2(\Si_g;\underline{\Z})\simeq \Z$, for then $\cL$ and $\cL'$ determine the same element in $\ceH^1(\Si_g;\cC_{\Si_g}^{\, *})$.
\end{proof}

The exact same proof, replacing $\cC_{\Si_g}$ with the sheaf of \textit{smooth} complex-valued functions on $\Si_g$ (also a fine sheaf), would show the smooth classification to be the same as the topological one. Note that the whole commutative diagramme above also shows that any topological line bundle on $\Si_g$ admits a holomorphic structure (i.e. the vertical map $\ceH^1(\Si_g;\O^*) \lra \ceH^1(\Si_g;\cC_{\Si_g}^{\, *})$ is surjective). The bottom line of the diagramme also shows that there exist holomorphic line bundles of any degree $d\in\Z$.

\begin{theorem}[Moduli spaces of holomorphic line bundles]
Let $\Si_g$ be a compact, connected Riemann surface of genus $g$. Then:
\begin{enumerate}
\item Two holomorphic line bundles on $\Si_g$ are topologically isomorphic if and only if they have the same degree $d\in\Z$.
\item For any such $d$, the set $\Pic^d(\Si_g)$ of isomorphism classes of holomorphic line bundles of degree $d$ admits a structure of compact complex analytic manifold which makes it isomorphic to the $g$-dimensional complex torus $\C^g/\Z^{2g}$ with its canonical complex analytic structure.
\end{enumerate}
\end{theorem}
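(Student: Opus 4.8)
The plan is as follows. Assertion~(1) is exactly Lemma~\ref{top_classif_of_line_bundles}, already proved above, so I would simply invoke it; I would also recall from that discussion that the degree map $\mathit{deg}:\Pic(\Si_g)\lra\Z$ is surjective (it is the last nonzero arrow of the exact sequence \eqref{Jacobian_and_Picard}), so that for every $d\in\Z$ the set $\Pic^d(\Si_g)=\mathit{deg}^{-1}(d)$ is nonempty. Everything after that is devoted to assertion~(2): equipping $\Pic^d(\Si_g)$ with the announced structure.

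First I would extract from the short exact sequence of groups \eqref{Jacobian_and_Picard},
$$0\lra \Jac(\Si_g)\lra \Pic(\Si_g)\overset{\mathit{deg}}{\lra}\Z\lra 0\,,$$
the fact that $\Pic^d(\Si_g)$ is a single fibre of $\mathit{deg}$, hence a coset of $\ker(\mathit{deg})=\Pic^0(\Si_g)=\Jac(\Si_g)$ inside $\Pic(\Si_g)$. Unwinding this, tensor product of line bundles gives a free and transitive action of $\Jac(\Si_g)$ on $\Pic^d(\Si_g)$: if $\cL,\cL'\in\Pic^d(\Si_g)$, then $\cL\otimes(\cL')^{-1}$ has degree $0$ and is the unique element of $\Jac(\Si_g)$ carrying $\cL'$ to $\cL$. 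Thus $\Pic^d(\Si_g)$ is a torsor under the group $\Jac(\Si_g)$.

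Next I would pick a base point $\cL_0\in\Pic^d(\Si_g)$ and use the ensuing bijection
$$t_{\cL_0}\,:\,\Jac(\Si_g)\lra \Pic^d(\Si_g)\,,\qquad \cM\lmt\cM\otimes\cL_0\,,$$
to transport onto $\Pic^d(\Si_g)$ the canonical structure of compact complex analytic manifold of $\Jac(\Si_g)\simeq\C^g/\Z^{2g}$ recorded in Definition~\ref{def:Jacobian_and_Picard}. By construction $t_{\cL_0}$ is then biholomorphic, so $\Pic^d(\Si_g)$ becomes a compact complex manifold isomorphic to $\C^g/\Z^{2g}$, as wanted.

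The step I expect to be the real point is checking that this structure does not depend on the chosen base point $\cL_0$; this is where one genuinely uses that $\Jac(\Si_g)$ is not merely a complex manifold but a complex Lie group. For a second choice $\cL_0'$, the transition map $t_{\cL_0}^{-1}\circ t_{\cL_0'}:\Jac(\Si_g)\lra\Jac(\Si_g)$ is the translation $\cM\lmt\cM\otimes(\cL_0'\otimes\cL_0^{-1})$ by the element $\cL_0'\otimes\cL_0^{-1}\in\Jac(\Si_g)$, which is biholomorphic precisely because the group operations of $\Jac(\Si_g)$ are analytic (Definition~\ref{def:Jacobian_and_Picard}); hence the two transported structures agree. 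Finally I would note that the degree used throughout coincides with the topological degree of Definition~\ref{def_of_degree}, by Lemma~\ref{top_classif_of_line_bundles}. Apart from the base-point independence, all the steps are formal consequences of the exact sequence \eqref{Jacobian_and_Picard} and of the identification $\Jac(\Si_g)\simeq\C^g/\Z^{2g}$ already in hand.
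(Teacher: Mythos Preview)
Your proposal is correct and follows exactly the same route as the paper: invoke Lemma~\ref{top_classif_of_line_bundles} for~(1), then read off from the exact sequence~\eqref{Jacobian_and_Picard} that $\Pic^d(\Si_g)$ is a $\Jac(\Si_g)$-torsor and transport the complex structure from $\Jac(\Si_g)\simeq\C^g/\Z^{2g}$. Your explicit verification of base-point independence via translations in the complex Lie group $\Jac(\Si_g)$ is a welcome addition; the paper leaves this implicit.
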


\begin{proof}
The proof of (1) is Lemma \ref{top_classif_of_line_bundles}. To prove (2), we see from \eqref{Jacobian_and_Picard} and Definition \ref{def:Jacobian_and_Picard} that the set $\Pic^d(\Si_g)$ of holomorphic line bundles of degree $d\in\Z$ is the fibre of the degree map above $d$ (in particular, $\Jac(\Si_g) = \Pic^0(\Si_g)$). By the short exact sequence \eqref{Jacobian_and_Picard}, $\Pic^d(\Si_g)$ is a principal homogeneous space for the group $\ker\deg=\Jac(\Si_g)$, meaning that the action of $\Jac(\Si_g)$ on $\Pic^d(\Si_g)$ defined for $\cL_0\in\Jac(\Si_g)$ by $$\cL \lmt \cL\otimes \cL_0$$ is free and transitive. In particular, \textit{$\Pic^d(\Si_g)$ is a complex analytic manifold isomorphic to the $g$-dimensional complex torus $\C^g/\Z^{2g}$}.
\end{proof}

This shows that, while the topological classification of line bundles is achieved by means of a single discrete invariant (the degree), the holomorphic classification requires continuous parameters (the moduli of holomorphic line bundles are the points of a complex analytic manifold isomorphic to $\C^g/\Z^{2g}$). Note that here we did not have to get rid of any line bundle in order to obtain a moduli space admitting a complex analytic structure. This is no longer true in rank greater than $1$, and this is where the notion of semi-stable bundle comes into play (non-semi-stable bundles will be discarded and a moduli variety obtained) .

\subsection{Stable and semi-stable vector bundles}

A basic property of holomorphic line bundles on a compact connected Riemann surface $\Si_g$ states that they do not admit non-zero global holomorphic sections if their degree is negative (see for instance \cite{Forster}, Theorem 16.5). Since a homomorphism between the holomorphic line bundles $\cL_1$ and $\cL_2$ is a section of the \textit{line bundle} $\cL_1^*\otimes \cL_2$, a non-zero such homomorphism may only exist if $\deg(\cL_1^*\otimes \cL_2) \geq 0$, which is equivalent to $\deg{\cL_1} \leq \deg{\cL_2}$. Semi-stable vector bundles of rank $r\geq 2$ provide a class of higher rank vector bundles for which the statement above remains true (see Proposition \ref{generalising_line_bundles}). Note that, for higher rank vector bundles, the degree of $\cE_1^*\otimes \cE_2$ is $$\deg(\cE_1^*\otimes\cE_2) = \rk(\cE_1)\deg(\cE_2) - \deg(\cE_1)\rk{\cE_2}$$ so the non-negativity condition is equivalent to $$\frac{\deg{\cE_1}}{\rk{\cE_1}} \leq \frac{\deg{\cE_2}}{\rk{\cE_2}}\cdot$$ This motivates the following definition.

\begin{definition}[Slope]
The slope of a non-zero complex vector bundle $E\lra\Si_g$ on an orientable, compact, connected  surface $\Si_g$ is the rational number $$\mu(E):=\frac{\deg{E}}{\rk{E}}\in\Q\, .$$
\end{definition}

We point out that no use is made of the holomorphic structures in the definition of the slope. It is a purely topological quantity, that will, nonetheless, have strong holomorphic properties (another example of a topological invariant with strong holomorphic properties is the genus: on a compact, connected, orientable surface of genus $g$, the dimension of the space of holomorphic $1$-forms is equal to $g$ for \textit{any} complex analytic structure on the surface).

In what follows, we call a sub-bundle $\cF\subset\cE$ non-trivial if it is distinct from $0$ and $\cE$. We emphasise that the definition that we give here is that of \textit{slope} stability. However, since this is the only notion of stability that we shall consider in these notes, we will only say stable and semi-stable afterwards.

\begin{definition}[Slope stability]
A (non-zero) holomorphic vector bundle $\cE\lra\Si_g$ on a compact, connected Riemann surface $\Si_g$ is called
\begin{enumerate}
\item \textbf{slope stable}, or simply stable, if for any non-trivial holomorphic sub-bundle $\cF$, one has $$\mu(\cF) < \mu(\cE)\, .$$
\item \textbf{slope semi-stable}, or simply semi-stable, if for any non-trivial holomorphic sub-bundle $\cF$, one has $$\mu(\cF) \leq \mu(\cE)\, .$$
\end{enumerate}
\end{definition}

A couple of remarks are in order. First, all holomorphic line bundles are stable (since they do not even have non-trivial sub-bundles), and all stable bundles are semi-stable. Second, a semi-stable vector bundle with coprime rank and degree is actually stable (this only uses the definition of slope stability and the properties of Euclidean division in $\Z$). Next, we have the following equivalent characterisation of stability and semi-stability, which is sometimes useful in practice.

\begin{proposition}\label{another_charac_of_stability}
A holomorphic vector bundle $\cE$ on $\Si_g$ is stable if and only if, for any non-trivial sub-bundle $\cF\subset\cE$, one has $\mu(\cE/\cF) > \mu(\cE)$. It is semi-stable if and only if $\mu(\cE/\cF)\geq \mu(\cF)$ for all such $\cF$.
\end{proposition}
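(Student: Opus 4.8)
The plan is to reduce the statement to an elementary comparison between a fraction and the mediant of two fractions, combined with the additivity of rank and degree along a short exact sequence of bundles.

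First I would fix a non-trivial holomorphic sub-bundle $\cF\subset\cE$ and form the quotient $\mathcal{Q}:=\cE/\cF$, which is again a holomorphic vector bundle precisely because $\cF$ is a sub-\emph{bundle} and not merely a coherent subsheaf, so that $0\to\cF\to\cE\to\mathcal{Q}\to0$ is a short exact sequence of holomorphic vector bundles on $\Si_g$. From the Whitney sum formula for Chern classes --- equivalently, from the canonical isomorphism $\det\cE\simeq\det\cF\otimes\det\mathcal{Q}$ --- together with the evident additivity of the rank, I obtain $\rk{\cE}=\rk{\cF}+\rk{\mathcal{Q}}$ and $\deg{\cE}=\deg{\cF}+\deg{\mathcal{Q}}$. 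Writing $r_1=\rk{\cF}$, $r_2=\rk{\mathcal{Q}}$ (both strictly positive, since $\cF$ is non-trivial) and $d_1=\deg{\cF}$, $d_2=\deg{\mathcal{Q}}$, the slope $\mu(\cE)=\frac{d_1+d_2}{r_1+r_2}$ is exactly the mediant of $\mu(\cF)=d_1/r_1$ and $\mu(\mathcal{Q})=d_2/r_2$.

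Next I would record the elementary fact that, for integers $d_1,d_2$ and positive integers $r_1,r_2$, clearing denominators yields the chain of equivalences
$$\frac{d_1}{r_1}<\frac{d_1+d_2}{r_1+r_2}\Longleftrightarrow d_1r_2<d_2r_1\Longleftrightarrow\frac{d_1}{r_1}<\frac{d_2}{r_2}\Longleftrightarrow\frac{d_1+d_2}{r_1+r_2}<\frac{d_2}{r_2},$$
and that the identical chain holds with every strict inequality replaced by $\leq$. Specialising to $\mu(\cF)=d_1/r_1$, $\mu(\cE/\cF)=d_2/r_2$ and $\mu(\cE)=(d_1+d_2)/(r_1+r_2)$ gives the two equivalences $\mu(\cF)<\mu(\cE)\Leftrightarrow\mu(\cE)<\mu(\cE/\cF)$ and $\mu(\cF)\leq\mu(\cE)\Leftrightarrow\mu(\cF)\leq\mu(\cE/\cF)$, valid for every non-trivial $\cF$. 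Plugging these into the definitions then finishes the argument: $\cE$ is stable iff $\mu(\cF)<\mu(\cE)$ for every non-trivial sub-bundle $\cF$, hence iff $\mu(\cE/\cF)>\mu(\cE)$ for every such $\cF$; and $\cE$ is semi-stable iff $\mu(\cF)\leq\mu(\cE)$ for every non-trivial $\cF$, hence iff $\mu(\cE/\cF)\geq\mu(\cF)$ for every such $\cF$.

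I do not anticipate any genuine obstacle: the reasoning is purely formal once the additivity relations are in hand. The only points deserving a sentence of justification are the additivity $\deg{\cE}=\deg{\cF}+\deg(\cE/\cF)$ in a short exact sequence, which I would either quote or deduce from the multiplicativity of the determinant line bundle, and the observation that the quotient of a holomorphic bundle by a genuine sub-bundle is again locally free --- it is this, rather than the analogous statement for arbitrary coherent subsheaves, that makes the Proposition literally correct as stated.
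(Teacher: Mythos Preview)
Your proof is correct and follows essentially the same approach as the paper: form the short exact sequence $0\to\cF\to\cE\to\cE/\cF\to0$, use additivity of rank and degree, and then invoke the elementary mediant inequality chain $\frac{d'}{r'}<\frac{d'+d''}{r'+r''}\Leftrightarrow\frac{d'}{r'}<\frac{d''}{r''}\Leftrightarrow\frac{d'+d''}{r'+r''}<\frac{d''}{r''}$ (and likewise with $\leq$). You supply a bit more justification for the additivity of the degree and the local freeness of the quotient than the paper does, but the argument is the same.
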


\begin{proof}
Denote $r,r',r''$ the respective ranks of $\cE,\cF$ and $\cE/\cF$, and $d,d',d''$ their respective degrees. One has an exact sequence $$0\lra \cF \lra \cE \lra \cE/\cF\lra 0$$ so $r=r'+r''$ and $d=d'+d''$. Therefore, $$\frac{d'}{r'} < \frac{d'+d''}{r'+r''} \Leftrightarrow \frac{d'}{r'} < \frac{d''}{r''} \Leftrightarrow \frac{d'+d''}{r'+r''} < \frac{d''}{r''}$$ and likewise with large inequalities or with equalities. This readily implies the Proposition.
\end{proof}

In a way, semi-stable holomorphic vector bundles are holomorphic vector bundles that do not admit \textit{too many} sub-bundles, since any sub-bundle they may have is of slope no greater than their own. This turns out to have a number of interesting consequences that we now study. We recall that the category of vector bundles on a curve is a typical example of an additive category which is not Abelian: even though it admits kernels and co-kernels (hence also images and co-images), the canonical map $\cE / \ker u \lra \Im u$ is in general not an isomorphism. We can, however, always compare the slopes of these two bundles.

\begin{lemma}\label{slopes_of_image_and_coimage}
If $u:\cE\lra \cE'$ is a non-zero homomorphism of vector bundles over $\Si_g$, then $$\mu(\cE/\ker u) \leq \mu(\Im u)$$ with equality if and only if the canonical map $\cE/ \ker u \lra \Im u$ is an isomorphism.
\end{lemma}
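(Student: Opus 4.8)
The plan is to reduce the inequality to the basic fact, recalled at the start of this subsection, that a holomorphic line bundle of negative degree has no non-zero global section, the reduction being carried out by passing to top exterior powers. Write $\bar u\colon\cE/\ker u\lra\Im u$ for the canonical map. The preliminary point is that $\bar u$ is a homomorphism between two vector bundles of the \emph{same} rank $k$ (with $k\geq 1$ since $u\neq 0$) and that it is an isomorphism over a dense open subset of $\Si_g$. Indeed, the pointwise rank of $u_x\colon\cE_x\to\cE'_x$ is a lower semi-continuous function of $x$, so it attains its maximal value $k$ on a dense open set $U=\Si_g\setminus S$, $S$ finite. Over $U$ the kernel $\ker u$ has rank $\rk{\cE}-k$ and the image of $u$ is a sub-bundle of $\cE'|_U$ of rank $k$, hence already equals $\Im u|_U$; thus $\cE/\ker u$ and $\Im u$ both have rank $k$ over $U$, and therefore everywhere since $\Si_g$ is connected, and over $U$ the map $\bar u$ is fibrewise the canonical isomorphism $\cE_x/\ker u_x\cong\Im u_x$.

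Passing to top exterior powers, $\det\bar u:=\wedge^k\bar u$ is a homomorphism of line bundles $\det(\cE/\ker u)\lra\det(\Im u)$, i.e. a global holomorphic section of $L:=\big(\det(\cE/\ker u)\big)^{*}\otimes\det(\Im u)$, and it is non-zero because it is invertible over $U$. A line bundle carrying a non-zero global section has non-negative degree, so $\deg{L}\geq 0$. Since $\deg{(\det V)}=\deg{V}$ (because $c_1(V)=c_1(\det V)$), and using the degree formulas for duals and tensor products of line bundles, $\deg{L}=\deg{(\Im u)}-\deg{(\cE/\ker u)}$; hence $\deg{(\Im u)}\geq\deg{(\cE/\ker u)}$, and dividing by $k=\rk{(\cE/\ker u)}=\rk{(\Im u)}$ yields $\mu(\cE/\ker u)\leq\mu(\Im u)$.

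For the equality statement, if $\bar u$ is an isomorphism of vector bundles then $\cE/\ker u$ and $\Im u$ have the same rank and degree, hence the same slope. Conversely, equality of slopes forces $\deg{L}=0$; but a non-zero holomorphic section of a degree-zero line bundle is nowhere vanishing (its divisor of zeros is effective of degree $0$, hence trivial), so $\det\bar u$ is invertible at every point of $\Si_g$, which forces each $\bar u_x$ to be a linear isomorphism between $k$-dimensional spaces, i.e. $\bar u$ is an isomorphism of vector bundles. The individual computations are routine; the only delicate point is the bookkeeping between the two notions of image — the coimage $\cE/\ker u$ is automatically a vector bundle, whereas $\Im u$ is the saturation in $\cE'$ of the sheaf-theoretic image of $u$, so that $\bar u$ is injective with torsion cokernel supported on $S$ — and it is exactly this torsion, equivalently the failure of $\det\bar u$ to be invertible on $S$, that accounts for the possible strictness of the inequality.
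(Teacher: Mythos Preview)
Your argument is correct and gives a genuinely different presentation from the paper's proof. The paper works in the Abelian category of coherent $\mathcal{O}_{\Sigma_g}$-modules: it identifies $\cE/\ker u$ with the sheaf-theoretic image $\mathrm{im}\,\underline{u}$ via the isomorphism theorem there, and then observes that the bundle $\mathrm{im}\,u$ is the \emph{saturation} of $\mathrm{im}\,\underline{u}$ inside $\underline{\cE'}$, so that the two have the same rank while the degree can only go up upon saturating (with equality exactly when no saturation is needed, i.e.\ when $u$ is strict). Your route instead passes to top exterior powers and reduces everything to a single line bundle: the map $\det\bar u$ is a non-zero holomorphic section of $L=\det(\cE/\ker u)^*\otimes\det(\mathrm{im}\,u)$, so $\deg L\geq 0$, with equality forcing the section to be nowhere vanishing and hence $\bar u$ to be an isomorphism.

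The two arguments are equivalent at heart --- the length of the torsion cokernel of $\bar u$ is precisely the degree of the divisor of zeros of $\det\bar u$ --- but each has its virtues. The paper's sheaf-theoretic phrasing makes transparent \emph{why} the inequality can be strict (the quotient $\underline{\widetilde{\cF}}/\underline{\cF}$ is a torsion sheaf of positive length exactly when $u$ fails to be strict) and generalises cleanly to higher-dimensional bases. Your determinant argument is more self-contained within the differential-geometric spirit of these notes: it invokes only the basic fact about sections of line bundles recalled at the start of the subsection and avoids any appeal to coherent-sheaf machinery. One small point worth making explicit in your write-up is why $\cE/\ker u$ is a vector bundle at all (equivalently, why $\ker u$ is a sub-bundle and not merely a subsheaf): this holds because $\underline{\cE}/\ker\underline{u}\hookrightarrow\underline{\cE'}$ is torsion-free, hence locally free on a curve --- a fact you use but leave implicit.
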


\noindent One says that $u$ is \textit{strict} if the canonical homomorphism $\cE/\ker u \lra \Im u$ is an isomorphism. In this case, $u$ is injective if and only if $\ker u = 0$ and $u$ is surjective if and only $\Im u=\cE'$. The proof we give below, of Lemma \ref{slopes_of_image_and_coimage}, requires notions on coherent modules over the sheaf $O_{\Si_g}$; it may be skipped upon firt reading of these notes. Recall that the category of vector bundles on $\Si_g$ is equivalent to the category of locally free $O_{\Si_g}$-modules (=torsion-free coherent $O_{\Si_g}$-modules). Let $\cE$ be a vector bundle on $\Si_g$ and let $\underline{\cE}$ be the corresponding torsion-free coherent module. Even though a coherent sub-module $\underline{\cF}$ of $\underline{\cE}$ is torsion-free, it only corresponds to a sub-bundle $\cF$ of $\cE$ if the coherent module $\underline{\cE}/\underline{\cF}$ is also torsion-free (and the latter then corresponds to the vector bundle $\cE/\cF$). This is equivalent to saying that $\underline{\cF}$ is locally a direct summand of $\underline{\cE}$. Given a coherent sub-module $\underline{\cF}$ of $\underline{\cE}$, there exists a smallest coherent sub-module $\underline{\widetilde{\cF}}$ containing $\underline{\cF}$ and such that $\underline{\cE}/\underline{\widetilde{\cF}}$ is torsion-free, namely the pre-image of the torsion sub-module of $\underline{\cE}/\underline{\cF}$. Then $\underline{\widetilde{\cF}}/\underline{\cF}$ has finite support and $\underline{\cF}$ and $\underline{\widetilde{\cF}}$ have same rank. Moreover, $\deg\underline{\cF}$ (which is well-defined since $\cF$ is locally free) satisfies $\deg\underline{\cF} \leq \deg \underline{\widetilde{\cF}}$, with equality if and only if $\underline{\widetilde{\cF}}= \underline{\cF}$. It is convenient to call the \textit{sub-bundle} $\widetilde{\cF}$ corresponding to $\underline{\widetilde{\cF}}$ the sub-bundle of $\cE$ \textit{generated} by $\underline{\cF} \subset \underline{\cE}$.

\begin{proof}[Proof of Lemma \ref{slopes_of_image_and_coimage}]
Recall that the category of coherent $O_{\Si_g}$-modules is Abelian. In particular, the isomorphism theorem between co-images and images holds in that category. Let $\underline{u}:\underline{\cE} \lra \underline{\cE'}$ be the homomorphism of coherent $O_{\Si_g}$-modules associated to $u:\cE\lra \cE'$. Then, on the one hand, the locally free $O_{\Si_g}$-module associated to $\ker u$ is $\ker \underline{u}$ and the locally free $O_{\Si_g}$-module associated to $\cE/\ker u$ is $\underline{\cE}/\ker\underline{u}$. On the other hand, $\Im u$ is the vector bundle \textit{generated} by $\Im{\underline{u}}\simeq \underline{\cE}/\underline{\ker{u}}$. So $\rk(\Im u) = \rk(\Im\underline{u})$ and $\deg(\Im{u}) \geq \deg(\Im\underline{u})$, with equality if and only if $u$ is strict. Therefore $\mu(\Im{u}) \geq \mu(\Im\underline{u})$, with equality if and only if $u$ is strict. So $$ \mu(\cE/\ker{u}) = \mu(\underline{\cE}/\ker{\underline{u}}) = \mu(\Im{\underline{u}}) \leq \mu(\Im{u})$$ with equality if and only if $\cE/\ker u \simeq \Im u$.\end{proof}

\noindent This immediately implies the result alluded to in the introduction to the present subsection.

\begin{proposition}\label{generalising_line_bundles}
Let $\cE$ and $\cE'$ be two semi-stable vector bundles such that $\mu(\cE) > \mu(\cE')$. Then any homomorphism $u:\cE \lra \cE'$ is zero.
\end{proposition}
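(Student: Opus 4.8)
The plan is to argue by contradiction: assume $u:\cE\to\cE'$ is non-zero and produce a slope squeezed between $\mu(\cE)$ and $\mu(\cE')$, contradicting $\mu(\cE)>\mu(\cE')$. The two pieces of data to exploit are the kernel $\ker u\subset\cE$ and the image $\Im u\subset\cE'$ (where, as in Lemma \ref{slopes_of_image_and_coimage}, $\Im u$ denotes the \emph{sub-bundle} of $\cE'$ generated by the set-theoretic image of $u$), together with the comparison of slopes provided by that lemma.

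First I would record what semi-stability gives on each side. Since $u\neq 0$, we have $\ker u\neq\cE$; so $\ker u$ is either $0$, in which case $\mu(\cE/\ker u)=\mu(\cE)$, or a non-trivial sub-bundle of $\cE$, in which case the semi-stability of $\cE$ together with the reformulation in Proposition \ref{another_charac_of_stability} applied to the exact sequence $0\to\ker u\to\cE\to\cE/\ker u\to 0$ gives $\mu(\cE/\ker u)\geq\mu(\cE)$. Either way, $\mu(\cE/\ker u)\geq\mu(\cE)$. Symmetrically, $\Im u\neq 0$ because $u\neq 0$, so $\Im u$ is either all of $\cE'$, giving $\mu(\Im u)=\mu(\cE')$, or a non-trivial sub-bundle of $\cE'$, in which case the semi-stability of $\cE'$ gives $\mu(\Im u)\leq\mu(\cE')$. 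Either way, $\mu(\Im u)\leq\mu(\cE')$.

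The key step is then to invoke Lemma \ref{slopes_of_image_and_coimage} for the homomorphism $u$, which yields $\mu(\cE/\ker u)\leq\mu(\Im u)$. Concatenating the three inequalities gives $\mu(\cE)\leq\mu(\cE/\ker u)\leq\mu(\Im u)\leq\mu(\cE')$, contradicting the hypothesis $\mu(\cE)>\mu(\cE')$. Hence $u=0$.

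The only genuinely delicate point is the middle inequality: in the non-abelian category of vector bundles on $\Si_g$ the canonical map $\cE/\ker u\to\Im u$ need not be an isomorphism, so one cannot simply equate $\mu(\cE/\ker u)$ and $\mu(\Im u)$ — this is exactly why Lemma \ref{slopes_of_image_and_coimage} is needed rather than a naive isomorphism theorem. Everything else is just the uniform bookkeeping of the trivial/non-trivial sub-bundle cases, which the two observations above handle at once.
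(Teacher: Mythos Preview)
Your proof is correct and follows essentially the same route as the paper's: assume $u\neq 0$, use Proposition~\ref{another_charac_of_stability} on the semi-stable $\cE$ to get $\mu(\cE/\ker u)\geq\mu(\cE)$, invoke Lemma~\ref{slopes_of_image_and_coimage} for $\mu(\cE/\ker u)\leq\mu(\Im u)$, and then contradict the semi-stability of $\cE'$. The only cosmetic difference is that you spell out the trivial/non-trivial case split for $\ker u$ and $\Im u$ explicitly, whereas the paper absorbs this into the phrasing of the cited results.
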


\begin{proof}
If $u$ is non-zero, then, since $\cE$ is semi-stable, Proposition \ref{another_charac_of_stability} and Lemma \ref{slopes_of_image_and_coimage} imply that $$\mu(\Im u) \geq \mu(\cE/\ker u) \geq \mu(\cE) > \mu(\cE')\, ,$$ which contradicts the semi-stability of $\cE'$.
\end{proof}

We now focus on the category of semi-stable vector bundles of fixed slope $\mu\in\Q$. Unlike the category of all vector bundles on $\Si_g$, this is an Abelian category: it is additive, and we prove below that it admits kernels and co-kernels and that the isomorphism theorem holds.

\begin{proposition}\label{Abelian_category}
Let $u:\cE\lra\cE'$ be a non-zero homomorphism of semi-stable vector bundles of slope $\mu$. Then $\ker u$ and $\Im u$ are semi-stable vector bundles of slope $\mu$, and the natural map $\cE/\ker u\lra\Im u$ is an isomorphism. In particular, the category of semi-stable vector bundles of slope $\mu$ is Abelian.
\end{proposition}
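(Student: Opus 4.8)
The plan is to exploit the two slope inequalities coming from Lemma \ref{slopes_of_image_and_coimage} together with the semi-stability hypotheses, and show they can only be satisfied simultaneously when all the slopes in sight equal $\mu$. First I would set $\cK=\ker u$ and let $\mathcal{I}=\Im u$ be the sub-bundle of $\cE'$ generated by the image. Since $u$ is non-zero, $\cK$ is a proper sub-bundle of $\cE$ (possibly zero) and $\mathcal{I}$ is a non-zero sub-bundle of $\cE'$. Write $\mu(\cK)$, $\mu(\cE/\cK)$, $\mu(\mathcal{I})$ for the relevant slopes.

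Next I would string together the inequalities. Semi-stability of $\cE'$ gives $\mu(\mathcal{I})\leq\mu(\cE')=\mu$. Lemma \ref{slopes_of_image_and_coimage} gives $\mu(\cE/\cK)\leq\mu(\mathcal{I})$. If $\cK\neq 0$, semi-stability of $\cE$ (in the quotient form of Proposition \ref{another_charac_of_stability}) gives $\mu\leq\mu(\cE/\cK)$; if $\cK=0$ then $\cE/\cK=\cE$ and $\mu(\cE/\cK)=\mu$ trivially. Either way $\mu\leq\mu(\cE/\cK)$. Chaining these, $\mu\leq\mu(\cE/\cK)\leq\mu(\mathcal{I})\leq\mu$, so all three are equalities: $\mu(\cE/\cK)=\mu(\mathcal{I})=\mu$. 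The equality $\mu(\cE/\cK)=\mu(\mathcal{I})$ is precisely the equality case of Lemma \ref{slopes_of_image_and_coimage}, hence the canonical map $\cE/\cK\to\mathcal{I}=\Im u$ is an isomorphism, and $\Im u$ is semi-stable of slope $\mu$ (it is a quotient-by-a-sub-bundle of $\cE$ with maximal possible slope; one should note that a sub-bundle $\cG\subset\Im u$ is then also a sub-bundle of $\cE'$, so $\mu(\cG)\leq\mu$, giving semi-stability directly).

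It remains to handle $\cK=\ker u$. From the exact sequence $0\to\cK\to\cE\to\cE/\cK\to 0$ one has $\mu(\cK)r(\cK)+\mu(\cE/\cK)r(\cE/\cK)=\mu\,r(\cE)$ with $r(\cE)=r(\cK)+r(\cE/\cK)$; since $\mu(\cE/\cK)=\mu$ this forces $\mu(\cK)=\mu$ as well (when $\cK\neq 0$). Semi-stability of $\cK$ then follows because any sub-bundle of $\cK$ is a sub-bundle of $\cE$, hence of slope $\leq\mu=\mu(\cK)$. Finally, having checked that kernels, cokernels (apply the argument to get $\mathrm{coker}\,u=\cE'/\Im u$ semi-stable of slope $\mu$, using that $\mathrm{coker}$ in the category is the bundle $\cE'/\Im u$) and images exist in the category and that the canonical map $\cE/\ker u\to\Im u$ is always an isomorphism, one concludes the category is Abelian.

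The main obstacle, and the only subtle point, is the bookkeeping between the \emph{sheaf-theoretic} kernel/image and the \emph{vector-bundle} kernel/image: $\Im u$ as a bundle is the saturation of $\mathrm{im}\,\underline{u}$, and it is this discrepancy in degree that Lemma \ref{slopes_of_image_and_coimage} controls. Once one trusts that lemma and its equality case, the rest is a purely formal squeeze of rational numbers; I would take care to treat the degenerate cases $\ker u=0$ and $\Im u=\cE'$ explicitly so that the quotient-slope characterisation of semi-stability is only invoked on genuinely non-trivial sub-bundles. Cokernel semi-stability should be stated as a symmetric consequence so that the verification of the Abelian axioms is complete.
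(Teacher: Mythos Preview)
Your proposal is correct and follows essentially the same approach as the paper: chain the inequality $\mu(\Im u)\leq\mu(\cE')=\mu$ from semi-stability of $\cE'$ with $\mu(\cE/\ker u)\leq\mu(\Im u)$ from Lemma~\ref{slopes_of_image_and_coimage} and $\mu(\cE)\leq\mu(\cE/\ker u)$ from semi-stability of $\cE$, forcing equalities throughout; then invoke the equality case of Lemma~\ref{slopes_of_image_and_coimage} for the isomorphism, read off $\mu(\ker u)=\mu$ from the short exact sequence, and deduce semi-stability of $\ker u$ and $\Im u$ from the fact that their sub-bundles are sub-bundles of $\cE$ and $\cE'$ respectively. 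Your added care about the degenerate case $\ker u=0$ and the explicit mention of cokernels are welcome but not new ideas.
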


\begin{proof}
Since $u\neq 0$, $\Im u$ is a non-zero sub-bundle of $\cE'$, so $\mu(\Im u)\leq \mu(\cE')=\mu$. But, by Lemma \ref{slopes_of_image_and_coimage}, $$\mu(\Im u) \geq \mu(\cE/\ker u) \geq \mu(\cE) = \mu\, .$$ So $\mu(\Im u) = \mu$ and $\mu(\cE/\ker u) = \mu$. In particular, by Lemma \ref{slopes_of_image_and_coimage}, $\cE/\ker u\simeq\Im u$. Consider now the exact sequence $$0\lra \ker u \lra \cE \lra \cE/\ker u \lra 0\, .$$ Since $\mu(\cE)=\mu(\cE/\ker u)=\mu$, one also has $\mu(\ker u)=\mu$. Finally, since a sub-bundle of $\ker u$ (resp. $\Im u$) is also a sub-bundle of $\cE$ (resp. $\cE'$), its slope is no greater than $\mu(\cE)=\mu=\mu(\ker u)$ (resp. $\mu(\cE')=\mu=\mu(\Im u)$), so $\ker u$ (resp. $\Im u$) is semi-stable.
\end{proof}

\noindent As an easy consequence of the above, the following result shows that, by considering only stable bundles of the same slope, we can better control the homomorphisms between them.

\begin{proposition}\label{isom_or_zero}
Let $\cE$ and $\cE'$ be two stable vector bundles on $\Si_g$ such that $\mu(\cE)=\mu(\cE')$, and let $u:\cE \lra \cE'$ be a non-zero homomorphism. Then $u$ is an isomorphism.
\end{proposition}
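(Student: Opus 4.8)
The plan is to show that a non-zero homomorphism $u:\cE\lra\cE'$ between stable bundles of equal slope is both injective and surjective in the strict sense, and then invoke the fact that a strict bijective bundle map is an isomorphism. First I would apply Proposition \ref{Abelian_category}: since stable bundles are in particular semi-stable, $u$ is a non-zero homomorphism of semi-stable bundles of slope $\mu=\mu(\cE)=\mu(\cE')$, so $\ker u$ and $\Im u$ are semi-stable of slope $\mu$, and the canonical map $\cE/\ker u\lra\Im u$ is an isomorphism; in particular $u$ is strict.

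Next I would rule out $\ker u\neq 0$. If $\ker u$ were a non-trivial sub-bundle of $\cE$, then by stability of $\cE$ we would have $\mu(\ker u)<\mu(\cE)=\mu$, contradicting the fact just established that $\mu(\ker u)=\mu$. Hence $\ker u=0$, i.e. $u$ is injective. Then I would rule out $\Im u\subsetneq\cE'$: if $\Im u$ were a non-trivial sub-bundle of $\cE'$, stability of $\cE'$ would force $\mu(\Im u)<\mu(\cE')=\mu$, again contradicting $\mu(\Im u)=\mu$. The remaining degenerate case $\Im u=0$ is excluded because $u\neq 0$. So $\Im u=\cE'$, i.e. $u$ is surjective.

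Finally, since $u$ is strict with $\ker u=0$ and $\Im u=\cE'$, the remark following Lemma \ref{slopes_of_image_and_coimage} (a strict homomorphism is injective iff its kernel vanishes and surjective iff its image is everything) shows $u$ is a bijective strict morphism, i.e. $\cE\simeq\cE/\ker u\simeq\Im u=\cE'$, so $u$ is an isomorphism of holomorphic vector bundles.

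I do not expect a serious obstacle here: the entire argument is a bookkeeping exercise combining Proposition \ref{Abelian_category} with the definition of stability. The one point that deserves care is the distinction between an injective sheaf map and an injective bundle map (a sub-\emph{sheaf} need not be a sub-\emph{bundle}), but this is exactly what strictness, guaranteed by Proposition \ref{Abelian_category}, takes care of: strictness ensures that $\ker u$ and $\Im u$ are genuine sub-bundles with the expected ranks and degrees, so that the slope inequalities defining stability can be applied to them directly.
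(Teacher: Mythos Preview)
Your proposal is correct and follows essentially the same route as the paper's proof: both invoke Proposition~\ref{Abelian_category} to obtain strictness and the slope equalities $\mu(\ker u)=\mu(\Im u)=\mu$, then use stability of $\cE$ and $\cE'$ to force $\ker u=0$ and $\Im u=\cE'$. Your added remark on why strictness is needed to pass from sheaf-theoretic to bundle-theoretic injectivity and surjectivity is a helpful clarification but not a departure from the paper's argument.
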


\begin{proof}
Recall that $\ker u\neq \cE$ by assumption. Since $u:\cE\lra\cE'$ is a non-zero homomorphism between semi-stable bundles of the same slope, Proposition \ref{Abelian_category} implies that $u$ is strict and that $\ker u$ is either $0$ or has slope equal to $\mu(\cE)$. Since $\cE$ is actually stable, $\ker u$ must be $0$. Since $u$ is strict, this implies that $u$ is injective. Likewise, $\Im u\neq 0$ by assumption, and has slope equal to $\mu(\cE')$ by Proposition \ref{Abelian_category}. Since $\cE'$ is actually stable, this forces $\Im u$ to be equal to $\cE'$. Then, again since $u$ is strict, $\Im u=\cE'$ implies that $u$ is surjective. Therefore, $u$ is an isomorphism.
\end{proof}

\noindent Note that a vector bundle always has non-trivial automorphisms (multiplication by a non-zero scalar on the fibres). When these are all the automorphisms of a given bundle, it is called a \textit{simple} bundle. We now show that stable implies simple.

\begin{proposition}\label{stable_implies_simple}
If $\cE$ is a stable vector bundle on $\Si_g$, then $\End\,\cE$ is a field, isomorphic to $\C$. In particular, $\Aut\,\cE\simeq\C^*$.
\end{proposition}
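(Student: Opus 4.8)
The plan is to show that any holomorphic endomorphism $u \in \End\,\cE$ of a stable bundle is either zero or an isomorphism, and then to pin down the isomorphisms as scalars by an eigenvalue argument. First I would invoke Proposition \ref{isom_or_zero} with $\cE' = \cE$: since $\mu(\cE) = \mu(\cE)$ trivially, any non-zero $u: \cE \lra \cE$ is automatically an isomorphism. This already tells us that every non-zero element of the ring $\End\,\cE$ is invertible, so $\End\,\cE$ is a division algebra over $\C$ (it is an associative unital $\C$-algebra in which every non-zero element is a unit). It is also finite-dimensional over $\C$: indeed $\End\,\cE = H^0(\Si_g; \underline{\End\,\cE})$ is the space of global holomorphic sections of a holomorphic vector bundle on a compact Riemann surface, hence finite-dimensional (one may cite the standard finiteness of $H^0$ for coherent sheaves on a compact complex manifold, or simply the fact that a global holomorphic section of a bundle on a compact connected Riemann surface vanishing to high enough order at a point must vanish identically).

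Next I would show that a finite-dimensional division algebra over $\C$ is $\C$ itself. The key step: pick any $u \in \End\,\cE$ and consider it as a $\C$-linear endomorphism of the finite-dimensional vector space $\End\,\cE$ (or, more concretely, look at the induced $\C$-linear map on a single fibre $\cE_x$). Since $\C$ is algebraically closed, $u$ has an eigenvalue $\lambda \in \C$; equivalently, the endomorphism $u - \lambda\,\Id_{\cE}$ of $\cE$ is not invertible, because at the point $x$ its action on $\cE_x$ has non-trivial kernel. But by the first part of the argument a non-invertible endomorphism of $\cE$ must be zero, so $u = \lambda\,\Id_{\cE}$. Hence $\End\,\cE = \C\cdot\Id_{\cE} \simeq \C$, and consequently $\Aut\,\cE$, the group of units of $\End\,\cE$, is $\C^* \cdot \Id_{\cE} \simeq \C^*$.

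I expect the main subtlety — rather than a real obstacle — to be the finiteness of $\dim_{\C} \End\,\cE$, which is what makes the eigenvalue argument go through: without it one only gets a division algebra, not necessarily $\C$. This is where compactness of $\Si_g$ is essential (on a non-compact Riemann surface there are plenty of non-scalar holomorphic endomorphisms even of the trivial line bundle). The cleanest phrasing is probably: $\underline{\End\,\cE}$ is a coherent sheaf on the compact complex manifold $\Si_g$, so its space of global sections is finite-dimensional; alternatively one argues directly that if $u$ had infinitely many eigenvalues on the fibre $\cE_x$ one would immediately get infinitely many independent non-invertible-hence-zero endomorphisms, a contradiction, but in fact a single eigenvalue suffices once finiteness is in hand. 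Everything else is a short chain of applications of Proposition \ref{isom_or_zero} together with the algebraic closedness of $\C$.
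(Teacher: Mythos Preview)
Your proof is correct and follows the same route as the paper's: invoke Proposition \ref{isom_or_zero} to make $\End\cE$ a division ring, then use the algebraic closedness of $\C$ to force every endomorphism to be scalar (the paper phrases this step via Cayley--Hamilton, you via an eigenvalue on a single fibre, which is arguably cleaner). Note that your discussion of the finite-dimensionality of $H^0(\Si_g;\underline{\End\,\cE})$ is unnecessary: the fibre $\cE_x$ is already finite-dimensional, and that is all the eigenvalue argument requires.
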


\begin{proof}
Let $u$ be a non-zero endomorphism of $\cE$. By Proposition \ref{isom_or_zero}, $u$ is an automorphism of $\cE$, so $\End\,\cE$ is a field, which contains $\C$ as its sub-field of scalar endomorphisms. Then, for any $u\in\End\,\cE$, the sub-field $\C(u)\subset\End\,\cE$ is a commutative field, and the Cayley-Hamilton Theorem shows that $u$ is algebraic over $\C$. Since $\C$ is algebraically closed, this shows that $u\in\C$. So $\End\,\cE\simeq\C$ (in particular, the field $\End\,\cE$ is commutative) and therefore $\Aut\,\cE\simeq\C^*$.
\end{proof}

\begin{corollary}
A stable vector bundle is indecomposable: it is not isomorphic to a direct sum of non-trivial sub-bundles.
\end{corollary}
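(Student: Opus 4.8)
The plan is to deduce this immediately from Proposition \ref{stable_implies_simple} by an idempotent argument. Suppose, for contradiction, that $\cE$ is stable and that there is an isomorphism $\cE \simeq \cE_1 \oplus \cE_2$ with $\cE_1$ and $\cE_2$ both non-trivial sub-bundles (i.e. both non-zero). Transporting the obvious projection of $\cE_1 \oplus \cE_2$ onto its first factor through this isomorphism, we obtain an endomorphism $p \in \End\,\cE$ with image $\cE_1$ and kernel $\cE_2$; it satisfies $p^2 = p$, and since $\cE_1 \neq 0$ we have $p \neq 0$, while since $\cE_2 \neq 0$ we have $p \neq \Id_{\cE}$.

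Now invoke Proposition \ref{stable_implies_simple}: because $\cE$ is stable, $\End\,\cE$ is isomorphic to the field $\C$. In a field the only solutions of $x^2 = x$ are $x = 0$ and $x = 1$, so the only idempotents of $\End\,\cE$ are $0$ and $\Id_{\cE}$. This contradicts the existence of the idempotent $p$ above, and therefore no such decomposition exists.

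There is essentially no hard step here: the only thing worth spelling out is the passage from a direct-sum decomposition to a non-trivial idempotent endomorphism, together with the elementary remark that a field has no idempotents other than $0$ and $1$. One could alternatively phrase the whole argument without mentioning idempotents, by applying Proposition \ref{isom_or_zero} to the composite $\cE_1 \hookrightarrow \cE \twoheadrightarrow \cE_2$ to see that any stable summand forces the complementary summand to vanish, but the idempotent formulation via Proposition \ref{stable_implies_simple} is the most direct.
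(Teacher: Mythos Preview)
Your proof is correct and follows essentially the same approach as the paper: both argue by contradiction from Proposition~\ref{stable_implies_simple}, observing that a non-trivial direct sum decomposition produces ``too many'' endomorphisms. The only cosmetic difference is that the paper phrases it in terms of automorphisms (the automorphism group of $\cE_1\oplus\cE_2$ contains $\C^*\times\C^*$, contradicting $\Aut\,\cE\simeq\C^*$), whereas you phrase it in terms of a non-trivial idempotent in $\End\,\cE\simeq\C$; these are two sides of the same coin.
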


\begin{proof}
The automorphism group of a direct sum $\cE=\cE_1\oplus\cE_2$ contains $\C^*\times\C^*$, so $\cE$ cannot be simple. Then, by Proposition \ref{stable_implies_simple}, it cannot be stable.
\end{proof}

The following result is key to understanding semi-stable bundles: these are extensions of stable bundles of the same slope.

\begin{theorem}[Seshadri, \cite{Seshadri}]
The simple objects in the category of semi-stable bundles of slope $\mu$ are the stable bundles of slope $\mu$. Any semi-stable holomorphic vector bundle of slope $\mu$ on $\Si_g$ admits a filtration $$0=\cE_0\subset \cE_1 \subset \cdots \subset \cE_k=\cE$$ by holomorphic sub-bundles such that, for all $i\in\{1;\cdots;k\}$, 
\begin{enumerate}
\item $\cE_i/\cE_{i-1}$ is stable,
\item $\mu(\cE_i/\cE_{i-1})=\mu(\cE)$. 
\end{enumerate} Such a filtration is called a \textbf{Jordan-H\"older filtration of length} $k$ of $\cE$.
\end{theorem}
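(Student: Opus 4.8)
The plan is to establish the two assertions in turn, relying throughout on the fact that the category of semi-stable bundles of slope $\mu$ is Abelian (Proposition \ref{Abelian_category}). The first assertion is a matter of identifying the subobjects in that category, while the second will be obtained by an induction on the rank.

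For the first assertion, suppose $m\colon\cF\lra\cE$ is a non-zero morphism of semi-stable bundles of slope $\mu$. By Proposition \ref{Abelian_category}, $m$ is strict and $\ker m$ is semi-stable of slope $\mu$, so $m$ is a monomorphism precisely when $\ker m=0$, and then $m$ is an injective homomorphism of vector bundles whose image is a sub-bundle of $\cE$ of slope $\mu$ isomorphic to $\cF$. Conversely, any sub-bundle $\cF\subset\cE$ with $\mu(\cF)=\mu$ is itself semi-stable, since a sub-bundle of $\cF$ is a sub-bundle of $\cE$ and hence has slope at most $\mu=\mu(\cF)$. Therefore the subobjects of $\cE$ are exactly $0$ and the sub-bundles of $\cE$ of slope $\mu$, and the equivalence follows at once: if $\cE$ is stable then every non-trivial sub-bundle has slope strictly below $\mu$, so $\cE$ has no subobject other than $0$ and itself and is therefore simple; while if $\cE$ is semi-stable but not stable, it has a non-trivial sub-bundle $\cF$ with $\mu(\cF)\geq\mu(\cE)=\mu$, which semi-stability forces to have slope exactly $\mu$, giving a proper non-zero subobject, so $\cE$ is not simple.

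For the second assertion I would induct on $\rk{\cE}$. If $\cE$ is stable (in particular whenever $\rk{\cE}=1$), the filtration $0\subset\cE$ works. Otherwise, arguing as in the first part, $\cE$ has a non-trivial sub-bundle of slope $\mu$; choose $\cE_1\subset\cE$ of minimal rank among the non-zero sub-bundles of slope $\mu$, so $1\leq\rk{\cE_1}<\rk{\cE}$. Then $\cE_1$ is stable: a non-trivial sub-bundle $\cF\subset\cE_1$ is a sub-bundle of $\cE$, so $\mu(\cF)\leq\mu=\mu(\cE_1)$ by semi-stability of $\cE$, and were $\mu(\cF)=\mu$, then $\cF$ would contradict the minimality of $\rk{\cE_1}$. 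Moreover $\cE/\cE_1$ is semi-stable of slope $\mu$: the slope is $\mu$ since degree and rank are additive in $0\lra\cE_1\lra\cE\lra\cE/\cE_1\lra 0$ and $\mu(\cE_1)=\mu(\cE)$, and if $\cG\subset\cE/\cE_1$ is a non-trivial sub-bundle and $\pi\colon\cE\lra\cE/\cE_1$ the projection, then $\pi^{-1}(\cG)$ is a sub-bundle of $\cE$ sitting in $0\lra\cE_1\lra\pi^{-1}(\cG)\lra\cG\lra 0$, so semi-stability of $\cE$ gives $\deg{\cE_1}+\deg{\cG}\leq\mu\,(\rk{\cE_1}+\rk{\cG})$, hence $\mu(\cG)\leq\mu$ because $\deg{\cE_1}=\mu\,\rk{\cE_1}$. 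Since $\rk(\cE/\cE_1)<\rk{\cE}$, the induction hypothesis furnishes a Jordan--H\"older filtration $0=\mathcal{Q}_0\subset\mathcal{Q}_1\subset\cdots\subset\mathcal{Q}_{k-1}=\cE/\cE_1$, and then $\cE_i:=\pi^{-1}(\mathcal{Q}_{i-1})$ for $1\leq i\leq k$ gives a filtration $0=\cE_0\subset\cE_1\subset\cdots\subset\cE_k=\cE$ by sub-bundles in which $\cE_1/\cE_0=\cE_1$ is stable of slope $\mu$ and $\cE_i/\cE_{i-1}\simeq\mathcal{Q}_{i-1}/\mathcal{Q}_{i-2}$ is stable of slope $\mu$ for $2\leq i\leq k$, as required.

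The step I expect to demand the most care is the verification that $\cE/\cE_1$ is again semi-stable, and, relatedly, maintaining a clean distinction between sub-bundles and arbitrary coherent sub-modules: one must reason with preimages of sub-bundles under the bundle surjection $\pi$, so that every bundle entering a slope inequality is genuinely a sub-bundle of a semi-stable bundle. As a by-product, this argument also re-establishes that the quotient of a semi-stable bundle of slope $\mu$ by a sub-bundle of slope $\mu$ is semi-stable of slope $\mu$. Alternatively, one could observe that every object of this category has finite length --- in a strictly increasing chain of sub-bundles of slope $\mu$ the ranks strictly increase --- and then invoke the Jordan--H\"older theorem in an Abelian category, the simple objects being the stable bundles of slope $\mu$ by the first assertion.
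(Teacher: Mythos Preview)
Your proof is correct and follows essentially the same strategy as the paper. The identification of simple objects with stable bundles is argued identically, and for the existence of the filtration both you and the paper isolate a stable sub-bundle $\cE_1$ of slope $\mu$, pass to the quotient, and iterate.

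The only cosmetic difference is in packaging: the paper frames the iteration via the Artinian/Noetherian chain conditions (a strictly decreasing chain of sub-bundles of slope $\mu$ terminates at a simple subobject; the resulting increasing chain of $\cE_i$ terminates because ranks are bounded), whereas you induct on rank and pick $\cE_1$ of \emph{minimal} rank among non-zero sub-bundles of slope $\mu$. These are interchangeable: your minimal-rank choice is exactly the terminal object of the paper's descending chain, and your rank induction replaces its ascending chain. Your argument has the minor advantage of spelling out directly why $\cE/\cE_1$ is again semi-stable of slope $\mu$, where the paper defers this to Exercise~\ref{invariance_under_extensions}; and your closing remark that one may alternatively invoke the general Jordan--H\"older theorem for finite-length objects in an Abelian category matches the paper's observation immediately following its proof.
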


\begin{proof}
Recall that a simple object in an Abelian category is an object with no non-trivial sub-object. In particular, a stable bundle $\cE$ is simple in that sense (it contains no non-trivial sub-bundle \textit{of slope equal to} $\mu(\cE)$). Conversely, if a semi-stable bundle $\cE$ is simple in that sense, then any non-trivial sub-bundle $\cF\subset \cE$ satisfies $\mu(\cF)\leq \mu(\cE)$ because $\cE$ is semi-stable, and $\mu(\cF)\neq \mu(\cE)$ because $\cE$ has no non-trivial sub-objects in the category of semi-stable bundles with slope $\mu$.\\ To prove the existence of a Jordan-H\"older filtration for a semi-stable bundle $\cE$, observe that increasing and decreasing sequences of sub-bundles of $\cE$ are stationary because of the bounds on the rank. If $\cE$ is not a simple object, there exists a non-trivial sub-bundle $\cE'$ of $\cE$ which is semi-stable and of slope $\mu$. If $\cE'$ is not a simple object, we can go on and find a decreasing sequence of non-trivial (semi-stable) sub-bundles (of slope $\mu$) in $\cE$. Such a sequence is stationary, and we call $\cE_1$ the final term: it is a simple sub-object of $\cE$, so it is a stable bundle of slope $\mu$. In particular, $\cE/\cE_1$ is semi-stable and also has slope $\mu$ (see Exercise \ref{invariance_under_extensions}). So there is a sub-bundle $\cE_2/\cE_1$ which is stable and of slope $\mu$. This gives an increasing sequence $$0=\cE_0\subset\cE_1 \subset \cE_2 \subset \cdots$$ of (semi-stable) sub-bundles of $\cE$ (of slope $\mu$) whose successive quotients are stable bundles of slope $\mu$. Such a sequence is stationary, so there is a $k$ such that $\cE_k=\cE$, and the resulting filtration of $\cE$ is a Jordan-H\"older filtration.
\end{proof}

\noindent One may observe that, to show the existence of a filtration whose successive quotients are simple objects in the category of semi-stable bundles of slope $\mu$, the proof only used that decreasing and increasing sequences \textit{of such bundles} were stationary. An Abelian category satisfying these properties is called \textit{Artinian} (decreasing sequences of sub-objects are stationary) and \textit{Noetherian} (increasing sequences of sub-objects are stationary).

Observe that if a bundle is stable, it admits a Jordan-H\"older filtration of length $1$, namely $0=\cE_0\subset \cE_1=\cE$. In general, there is no unicity of the Jordan-H\"older filtration, but the isomorphism class of the graded object associated to a filtration is unique, as shown by the next result. In particular, the lengths of any two Jordan-H\"older filtrations of $\cE$ are equal and a semi-stable bundle is stable if and only if its Jordan-H\"older filtrations have length $1$.

\begin{proposition}[Seshadri, \cite{Seshadri}]\label{graded_object}
Any two Jordan-H\"older filtrations $$(S): 0=\cE_0 \subset \cE_1 \subset \cdots \subset \cE_k=\cE$$ and $$(S'): 0=\cE'_0 \subset \cE'_1 \subset \cdots \subset \cE'_l=\cE$$ of a semi-stable vector bundle $\cE$ have same length $k=l$, and the associated graded objects $$\gr(S) := \cE_1/\cE_0 \oplus \cdots \oplus \cE_k/\cE_{k-1}$$ and $$\gr(S') := \cE'_1/\cE'_0 \oplus \cdots \oplus \cE'_k/\cE'_{k-1}$$ satisfy $$\cE_{i}/\cE_{i-1} \simeq \cE'_i/\cE'_{i-1}$$ for all $i\in\{1;\cdots;k\}$.
\end{proposition}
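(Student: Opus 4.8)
The plan is to recognise this statement as the Jordan-H\"older theorem transported into the Abelian category $\cC_\mu$ of semi-stable bundles of slope $\mu$ furnished by Proposition \ref{Abelian_category}: a Jordan-H\"older filtration of $\cE$ is precisely a composition series for $\cE$ in $\cC_\mu$, its successive quotients being the simple objects of $\cC_\mu$, i.e.\ the stable bundles of slope $\mu$. I would then argue by strong induction on the length $k$ of the filtration $(S)$, the ingredients being: Proposition \ref{isom_or_zero} (a proper sub-object of a stable bundle of slope $\mu$ is $0$, since a non-zero homomorphism between stable bundles of the same slope is an isomorphism); Proposition \ref{Abelian_category} (every morphism of $\cC_\mu$ is strict, so kernels, images, sub-objects, biproducts and quotients of objects of $\cC_\mu$ are again objects of $\cC_\mu$); and the existence part of Seshadri's theorem (every object of $\cC_\mu$ carries a composition series).

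If $k=1$ then $\cE$ is stable, hence simple in $\cC_\mu$, so it has no non-trivial sub-object; then necessarily $l=1$ and $\gr(S)=\cE=\gr(S')$. For the inductive step, assuming the assertion for all objects of $\cC_\mu$ admitting a composition series of length $<k$, I would compare the first non-zero terms $\cE_1$ and $\cE'_1$, both of which are simple in $\cC_\mu$. \emph{Case 1:} if $\cE_1=\cE'_1$ as sub-bundles of $\cE$, then $\cE/\cE_1\in\cC_\mu$ (cf.\ Exercise \ref{invariance_under_extensions}) and $(S)$, $(S')$ descend to composition series of $\cE/\cE_1$ of lengths $k-1$ and $l-1$, with graded objects $\bigoplus_{i\geq 2}\cE_i/\cE_{i-1}$ and $\bigoplus_{i\geq 2}\cE'_i/\cE'_{i-1}$; the induction hypothesis applied to $\cE/\cE_1$ gives $k-1=l-1$ and an isomorphism of these, and restoring the common summand $\cE_1$ concludes.

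\emph{Case 2:} if $\cE_1\neq\cE'_1$, the sub-object $\cE_1\cap\cE'_1$ of the simple object $\cE_1$ is proper, hence zero, so the biproduct $\cE_1\oplus\cE'_1$ (an object of $\cC_\mu$) maps to $\cE$ by a monomorphism which, being strict, identifies $\cE_1\oplus\cE'_1$ with a sub-bundle $\cF=\cE_1+\cE'_1\subset\cE$ lying in $\cC_\mu$. I would then pick (Seshadri) a Jordan-H\"older filtration $(T)$ of $\cE/\cF$, of length $m$, pull it back along $\cE\to\cE/\cF$, and prepend to it first $0\subset\cE_1\subset\cF$, producing a Jordan-H\"older filtration $(U)$ of $\cE$ of length $m+2$ with $\gr(U)\simeq\cE_1\oplus\cE'_1\oplus\gr(T)$, and then $0\subset\cE'_1\subset\cF$, producing $(U')$ with $\gr(U')\simeq\cE'_1\oplus\cE_1\oplus\gr(T)\simeq\gr(U)$. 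Now $(S)$ and $(U)$ share the first term $\cE_1$, so the Case 1 argument (induction applied to $\cE/\cE_1$) yields $k=m+2$ and $\gr(S)\simeq\gr(U)$; and since $\cE/\cE'_1$ carries the composition series induced by $(U')$, of length $m+1=k-1<k$, the induction hypothesis applies to it, so the Case 1 argument for the pair $(S'),(U')$ (same first term $\cE'_1$) yields $l=m+2=k$ and $\gr(S')\simeq\gr(U')$. Concatenating $\gr(S)\simeq\gr(U)\simeq\gr(U')\simeq\gr(S')$ completes the induction.

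The one step that is more than bookkeeping is Case 2's handling of the sub-objects $\cE_1\cap\cE'_1$ and $\cF=\cE_1+\cE'_1$: one must make sure these are genuine sub-bundles of $\cE$ belonging to $\cC_\mu$, and this is exactly where the fact that $\cC_\mu$ is Abelian with all morphisms strict (Proposition \ref{Abelian_category}) does the work — equivalently, one works in the ambient Abelian category of coherent $\O$-modules and checks that the sub-sheaves of $\underline{\cE}$ so produced are saturated. An alternative route, bypassing the case analysis altogether, would be to invoke the Schreier refinement theorem (via the Zassenhaus lemma) inside the Abelian category $\cC_\mu$: any two subnormal filtrations of $\cE$ admit equivalent refinements, and a composition series admits no proper refinement, so $(S)$ and $(S')$ are already equivalent.
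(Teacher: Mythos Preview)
Your proof is correct and follows the classical ``diamond'' argument for Jordan--H\"older: when the two bottom simple sub-objects $\cE_1$ and $\cE'_1$ differ, you form their join $\cF=\cE_1+\cE'_1$, choose an auxiliary composition series of $\cE/\cF$, and use the two resulting intermediate filtrations $(U)$ and $(U')$ as bridges. The paper takes the other standard route, a ``sliding'' argument: rather than introducing $\cF$, it locates the unique index $i$ with $\cE'_1\subset\cE_i$ and $\cE'_1\not\subset\cE_{i-1}$, observes via Proposition~\ref{isom_or_zero} that $\cE'_1\to\cE_i/\cE_{i-1}$ is an isomorphism so that $\cE_i=\cE_{i-1}\oplus\cE'_1$, and then rewrites $(S)$ as a filtration $(S_1)$ beginning with $\cE'_1$ and having the same length and the same graded pieces (permuted). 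This reduces to $\cE/\cE'_1$ directly, without choosing a fresh Jordan--H\"older filtration of any auxiliary quotient. Your approach is more symmetric in the two filtrations and makes the categorical content explicit; the paper's avoids invoking the existence theorem mid-proof, at the cost of the mildly asymmetric construction of $(S_1)$. Both prove that $\gr(S)\simeq\gr(S')$ as direct sums (i.e.\ the simple factors agree up to permutation), which is the correct content of the statement.
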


\begin{proof}
Assume for instance that $l<k$. Then there exists an $i\in\{1;\cdots;k\}$ such that $\cE_1'\subset\cE_i$ and $\cE'_1\not\subset \cE_{i-1}$. So the map $\cE´_1\hookrightarrow \cE_i  \lra \cE_i/\cE_{i-1}$ is a non-zero morphism between stable bundles of slope $\mu$. By Proposition \ref{isom_or_zero}, it is an isomorphism. So $\cE'_1\cap\cE_{i-1}=0$ and $\cE_i=\cE_{i-1}\oplus\cE'_1$. Then, $$(S_1): 0\subset \cE'_1 \subset \cE'_1\oplus \cE_1\subset \cdots \subset \cE'_1 \oplus\cE_{i-1} \subset \cE_{i+1} \subset \cdots\subset \cE_k=\cE$$ is a Jordan-H\"older filtration of length $k$ of $\cE$. Since $(S')$ and $(S_1)$ have the same first term, they induce Jordan-H\"older filtrations of $\cE/\cE'_1$, of respective lengths $l-1$ and $k-1$, with $l-1<k-1$. Repeating this process $l-1$ more times, we eventually reach $\cE/\cE'_{l-1}$ with a Jordan-H\"older filtration of length $k-l>0$. In particular, if the inclusions $\cE'_{l-1}\subset\cE_{k-1} \subset \cE_k=\cE$ are strict, there is a sub-bundle of $\cE_{k-1}/\cE'_{l-1}$ contradicting the stability of $\cE/\cE'_{l-1}$. So $l=k$.\\ Then we prove the second assertion by induction on the length $k$ of Jordan-H\"older filtrations of $\cE$. If $k=1$, it is obvious. If $k>1$, consider again the filtration $(S_1)$. It satisfies $\gr(S_1)\simeq\gr(S)$. Moreover, $(S_1)$ and $(S')$ have the same first term, so they induce Jordan-H\"older filtrations $(\overline{S_1})$ and $(\overline{S'})$ of length $k-1$ of $\cE/\cE'_{1}$. By the induction hypothesis $\gr(\overline{S_1})\simeq \gr(\overline{S'})$. So $$\gr(S)\simeq \gr(S_1) \simeq \gr(\overline{S_1}) \oplus \cE'_1 \simeq \gr(\overline{S'}) \oplus \cE'_1 \simeq \gr(S')\, .$$
\end{proof}

\noindent This motivates the following definition.

\begin{definition}[Poly-stable bundles]\label{pst_def}
A holomorphic vector bundle $\cE$ on $\Si_g$ is called poly-stable if it is isomorphic to a direct sum $\cF_1\oplus \cdots\oplus  \cF_k$ of stable vector bundles \textit{of the same slope}.
\end{definition}

\noindent Evidently, a stable bundle is poly-stable. We point out that a poly-stable vector bundle of rank $r$ admits a reduction of its structure group $\GL(r;\C)$ to a sub-group of the form $\GL(r_1;\C)\times \cdots \times \GL(r_k;\C)$, with $r_1+\cdots +r_k=r$ (also known as a Levi subgroup of $\GL(r;\C)$). An alternate definition of poly-stability is given as follows (also see Exercise \ref{poly_stability_again} for yet another characterization of poly-stability).

\begin{proposition}
A holomorphic vector bundle $\cE$ is poly-stable in the sense of Definition \ref{pst_def} if and only it is semi-stable and isomorphic to a direct sum of stable bundles.
\end{proposition}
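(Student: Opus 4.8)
The statement is an equivalence, and one implication is immediate. Suppose first that $\cE$ is poly-stable, say $\cE\simeq\cF_1\oplus\cdots\oplus\cF_k$ with each $\cF_i$ stable of the same slope $\mu$. Then $\cE$ is visibly a direct sum of stable bundles, and it only remains to check it is semi-stable (necessarily of slope $\mu$, since all the $\cF_i$ have slope $\mu$). The plan is to prove the more general fact that any iterated extension of semi-stable bundles of slope $\mu$ is semi-stable of slope $\mu$; since each $\cF_i$ is in particular semi-stable, this gives what we want. By an obvious induction on the number of steps, this reduces to the case of a single extension $0\to\cE'\to\cE\to\cE''\to 0$ with $\cE'$ and $\cE''$ semi-stable of slope $\mu$.

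To handle that case, let $\cG\subseteq\cE$ be a non-trivial sub-bundle; I must show $\mu(\cG)\le\mu$. Put $\cG':=\cG\cap\cE'$, a subsheaf of $\cE$. The key observation is that $\cG'$ is automatically a sub-bundle: if a local section $s$ of $\cE$ satisfies $fs\in\cG'$ for some non-zero holomorphic $f$, then $fs\in\cG$ and $fs\in\cE'$, and since $\cG$ and $\cE'$ are both saturated in $\cE$ we get $s\in\cG\cap\cE'=\cG'$. Thus $\cG'$ is a sub-bundle of both $\cG$ and $\cE'$, so $\mu(\cG')\le\mu(\cE')=\mu$ by semi-stability of $\cE'$ (when $\cG'\neq 0$). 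Moreover the composite $\cG\hookrightarrow\cE\to\cE''$ has kernel exactly $\cG'$, so $\cG/\cG'$ embeds as a subsheaf of $\cE''$; letting $Q\subseteq\cE''$ be the sub-bundle it generates (in the sense recalled before Lemma \ref{slopes_of_image_and_coimage}), one has $\rk{Q}=\rk{(\cG/\cG')}$ and $\deg{Q}\ge\deg{(\cG/\cG')}$, hence $\mu(\cG/\cG')\le\mu(Q)\le\mu(\cE'')=\mu$ by semi-stability of $\cE''$ (when $\cG/\cG'\neq 0$). Applying $\rk$ and $\deg$ to $0\to\cG'\to\cG\to\cG/\cG'\to 0$, the slope $\mu(\cG)$ is a mediant of $\mu(\cG')$ and $\mu(\cG/\cG')$, so $\mu(\cG)\le\mu$; the degenerate cases $\cG'=0$ (so $\cG\hookrightarrow\cE''$) and $\cG'=\cG$ (so $\cG\subseteq\cE'$) are immediate. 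This proves the claim, hence the forward implication. I expect this "direct sum of semi-stable bundles of the same slope is semi-stable" step to be the main obstacle: it is standard but requires precisely the generated-sub-bundle bookkeeping already used in Lemma \ref{slopes_of_image_and_coimage} (alternatively it could be deduced from the existence of a maximal destabilizing sub-bundle together with Proposition \ref{generalising_line_bundles}, but that machinery is not developed here).

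Conversely, assume $\cE$ is semi-stable and $\cE\simeq\cG_1\oplus\cdots\oplus\cG_m$ with each $\cG_j$ stable; I must show all the $\cG_j$ have slope $\mu(\cE)$. If $m=1$ this is trivial, so take $m\ge 2$ and fix $j$. The sub-bundle $\bigoplus_{i\neq j}\cG_i\subset\cE$ is non-trivial and $\cE/\bigl(\bigoplus_{i\neq j}\cG_i\bigr)\simeq\cG_j$. On one hand $\cG_j$ is itself a non-trivial sub-bundle of $\cE$, so $\mu(\cG_j)\le\mu(\cE)$ by semi-stability. On the other hand, by the quotient form of semi-stability (Proposition \ref{another_charac_of_stability}, via the equivalence $\mu(\cE/\cF)\ge\mu(\cF)\Leftrightarrow\mu(\cE/\cF)\ge\mu(\cE)$ noted in its proof), $\mu(\cG_j)=\mu\bigl(\cE/\bigl(\bigoplus_{i\neq j}\cG_i\bigr)\bigr)\ge\mu(\cE)$. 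Hence $\mu(\cG_j)=\mu(\cE)$ for every $j$, so $\cE$ is a direct sum of stable bundles of the common slope $\mu(\cE)$, i.e. poly-stable in the sense of Definition \ref{pst_def}.
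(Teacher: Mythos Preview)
Your proof is correct and follows essentially the same approach as the paper, which simply defers to Exercise~\ref{invariance_under_extensions}: the forward direction is exactly part~(a) of that exercise (extensions of semi-stable bundles of equal slope are semi-stable), whose details you spell out, and your converse argument via the sub-bundle and quotient inequalities is the natural one.
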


\begin{proof}
The proof follows from Exercise \ref{invariance_under_extensions}.
\end{proof}

The graded object associated to any Jordan-H\"older filtration of a semi-stable vector bundle $\cE$ is a poly-stable vector bundle (since it is a direct sum of simple objects in the category of semi-stable bundles of slope $\mu(\cE)$, it is a semi-simple object in that category). By Proposition \ref{graded_object}, its isomorphism class is uniquely defined; it is usually denoted $\gr(\cE)$ and it is a graded isomorphism class of poly-stable vector bundles. The following notion is due to Seshadri (he used it to compactify the quasi-projective moduli variety of stable bundles constructed by Mumford).

\begin{definition}[$S$-equivalence class, \cite{Seshadri}]
The graded isomorphism class $\gr(\cE)$ associated to a semi-stable vector bundle $\cE$ is called the $S$\textbf{-equivalence class} of $\cE$. If $\gr(\cE)\simeq\gr(\cE')$, we say that $\cE$ and $\cE'$ are $S$-equivalent, and we write $\cE\sim_S\cE'$.
\end{definition}

\noindent This defines an equivalence relation between semi-stable bundles of a given fixed slope. If two bundles of slope $\mu$ are $S$-equivalent, they have the same rank and the same degree (because the rank and degree of $\cF_1\oplus \cdots \oplus \cF_k$ are equal to those of $\cE$, see Exercise \ref{rank_and_degree_of_a_polystable_bundle}). The important point is that \textit{two non-isomorphic semi-stable vector bundles may be $S$-equivalent}. Two $S$-equivalent stable bundles, however, are isomorphic, by definition of the $S$-equivalence class.

\begin{definition}[Moduli set of semi-stable vector bundles]
The set $\Mod(r,d)$ of $S$-equivalence classes of semi-stable holomorphic vector bundles of rank $r$ and degree $d$ on $\Si_g$ is called the \textbf{moduli set} of semi-stable vector bundles of rank $r$ and degree $d$. It contains the set $\Mods(r,d)$ of isomorphism classes of stable vector bundles of rank $r$ and degree $d$. When $r$ and $d$ are coprime, every semi-stable bundle is in fact stable and these two sets coincide.
\end{definition}

\noindent Equivalently, $\Mod(r,d)$ is the set of isomorphism classes of poly-stable holomorphic vector bundles of rank $r$ and degree $d$. This will be important in Subsection \ref{section_on_Donaldson_thm}, where Donaldson's Theorem will be presented. The following theorem is the main result of the basic theory of vector bundles on a curve. It is due to Mumford for the first part (\cite{Mumford_VB}) and Seshadri for the second part (\cite{Seshadri}).

\begin{theorem}[Mumford-Seshadri, \cite{Mumford_VB,Seshadri}]
Let $g\geq 2$, $r\geq 1$ and $d\in\Z$.
\begin{enumerate}
\item The set $\Mods(r,d)$ of isomorphism classes of stable holomorphic vector bundles of rank $r$ and degree $d$ admits a structure of smooth, complex quasi-projective variety of dimension $r^2(g-1) + 1$.
\item The set $\Mod(r,d)$ of $S$-equivalence classes of semi-stable holomorphic vector bundles of rank $r$ and degree $d$ admits a structure of complex projective variety of dimension $r^2(g-1)+1$. $\Mods(r,d)$ is an open dense sub-variety of $\Mod(r,d)$.
\end{enumerate}
\noindent In particular, when $r\wedge d=1$, $\Mod(r,d)=\Mods(r,d)$ is a smooth complex projective variety. 
\end{theorem}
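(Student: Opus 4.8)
The plan is to realise $\Mod(r,d)$ and $\Mods(r,d)$ as Geometric Invariant Theory quotients of a parameter scheme, following Mumford and Seshadri. Fix an ample line bundle $\O(1)$ on $\Si_g$ (for instance $\O(p)$ for a point $p$). The first step is \emph{boundedness}: one shows there is an integer $m$ such that for \emph{every} semi-stable bundle $\cE$ of rank $r$ and degree $d$, the twist $\cE(m)=\cE\otimes\O(m)$ is globally generated and satisfies $H^1(\Si_g;\cE(m))=0$; by Riemann--Roch, $h^0(\cE(m))=\chi(\cE(m))=d+rm+r(1-g)=:N$ is then independent of $\cE$. Semi-stability is exactly what makes this work, since it bounds from below the slopes of quotient bundles and so controls the cohomology of $\cE(m)$; this is the step I expect to be the most delicate analytically (see \cite{Seshadri,Newstead_GIT}).

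With such an $m$ fixed, every semi-stable $\cE$ of type $(r,d)$ is a quotient $q\colon\O(-m)^{\oplus N}\lra\cE$ with Hilbert polynomial $P(t)=\chi(\cE(t))$, hence a point of Grothendieck's Quot scheme $\mathrm{Quot}$, which is projective. The locus $R^{ss}\subset\mathrm{Quot}$ of those $q$ for which $\cE$ is locally free and semi-stable and $H^0(q(m))$ is an isomorphism onto $\C^N$ is Zariski-open. The group $\GL(N,\C)$ acts on $\mathrm{Quot}$ by reparametrising $\C^N$; two points of $R^{ss}$ lie in one orbit if and only if the corresponding bundles are isomorphic, and the scalars act trivially, so the moduli \emph{set} is the orbit space $R^{ss}/\mathbf{PGL}(N,\C)$. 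To equip it with an algebraic structure one linearises the action, embedding $\mathrm{Quot}$ equivariantly into a Grassmannian via $q\mapsto H^0(q(\ell))$ for $\ell\gg m$, hence into a projective space carrying a $\GL(N,\C)$-linearised ample line bundle.

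The crux of the argument — and the step I expect to be the main obstacle — is the comparison of GIT (semi)stability of a point of $\mathrm{Quot}$ for this linearisation with \emph{slope} (semi)stability of the associated bundle. Via the Hilbert--Mumford numerical criterion, a one-parameter subgroup of $\GL(N,\C)$ amounts to a filtration of $\C^N$, which for $\ell$ large induces a filtration of $\cE$ by sub-bundles $\cF$, and one computes that the sign of the Mumford weight is governed precisely by the comparison of $\mu(\cF)$ with $\mu(\cE)$. This nontrivial computation, carried out in \cite{Mumford_VB,Seshadri} (see also \cite{Thomas_GIT,Newstead_GIT}), is what singles out semi-stable — rather than merely simple — bundles as the right objects. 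Granting it, GIT produces a good quotient $R^{ss}/\!/\mathbf{PGL}(N,\C)$, projective since $\mathrm{Quot}$ is, whose points correspond to the closed orbits in $R^{ss}$; as the closed orbit in the closure of $[\cE]$ corresponds to $\gr(\cE)$, and by Proposition~\ref{graded_object} the isomorphism class of $\gr(\cE)$ is the $S$-equivalence class of $\cE$, this quotient is exactly $\Mod(r,d)$. On the open locus $R^s\subset R^{ss}$ of \emph{stable} bundles the stabiliser of each point in $\mathbf{PGL}(N,\C)$ is trivial (Proposition~\ref{stable_implies_simple}) and orbits are closed, so $R^s/\mathbf{PGL}(N,\C)$ is a geometric quotient, a quasi-projective variety, equal to $\Mods(r,d)$ and open in $\Mod(r,d)$; density of $\Mods(r,d)$ follows once one exhibits at least one stable bundle of type $(r,d)$, via a Serre-type extension construction (here again $g\geq 2$ is used).

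It remains to get smoothness and the dimension, which I would obtain by deformation theory rather than from the construction itself: the Zariski tangent space to $\Mods(r,d)$ at $[\cE]$ is $H^1(\Si_g;\End\cE)$ and the obstruction space $H^2(\Si_g;\End\cE)$ vanishes because $\Si_g$ is a curve, so $\Mods(r,d)$ is smooth of dimension $\dim_\C H^1(\Si_g;\End\cE)$. By Proposition~\ref{stable_implies_simple}, $H^0(\Si_g;\End\cE)\simeq\C$; moreover $\deg{\End\cE}=0$ and $\rk{\End\cE}=r^2$, so Riemann--Roch gives $\dim_\C H^1(\Si_g;\End\cE)=h^0-\chi(\End\cE)=1-r^2(1-g)=r^2(g-1)+1$. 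Since $\Mods(r,d)$ is open and dense in the projective variety $\Mod(r,d)$, the latter has the same dimension, giving (1) and (2). Finally, when $r\wedge d=1$ the excerpt already notes that every semi-stable bundle of type $(r,d)$ is stable, so $R^{ss}=R^s$ and $\Mod(r,d)=\Mods(r,d)$; this common space is then at once smooth (by the deformation argument) and projective (as a GIT quotient of a projective scheme), which is the last assertion.
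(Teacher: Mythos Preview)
The paper does not give its own proof of this theorem: it is stated with attribution to \cite{Mumford_VB,Seshadri} and then the exposition moves on, so there is nothing in the text to compare your argument against. Your sketch is the standard GIT construction carried out in those references (boundedness, Quot scheme, linearisation, identification of GIT-(semi)stability with slope-(semi)stability via Hilbert--Mumford, closed orbits corresponding to $\gr(\cE)$, and the deformation-theoretic dimension count), and as an outline it is correct and faithful to what Mumford and Seshadri actually do.
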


\noindent For general $r$ and $d$, it can in fact be shown that the set of \textit{isomorphism} classes of semi-stable vector bundles of rank $r$ and degree $d$ does not admit such an algebraic structure (\cite{Seshadri_ast}). In other words, to obtain a moduli variety, we \textit{have to} identify $S$-equivalent, possibly non-isomorphic, objects.

\subsection{The Harder-Narasimhan filtration}

We conclude this section on stability of holomorphic vector bundles on a curve by showing the existence of a canonical filtration for \textit{any} holomorphic vector bundle, called the Harder-Narasimhan filtration.

\begin{theorem}[Harder-Narasimhan, \cite{HN}]\label{HN_filtration}
Any holomorphic vector bundle $\cE$ on $\Si_g$ has a unique filtration $$0=\cE_0\subset \cE_1\subset \cdots \subset \cE_l=\cE$$ by holomorphic sub-bundles such that
\begin{enumerate}
\item for all $i\in\{1;\cdots;l\}$, $\cE_i/\cE_{i-1}$ is semi-stable,
\item the slopes $\mu_i:=\mu(\cE_i/\cE_{i-1})$ of the successive quotients satisfy $$\mu_1 > \mu_2 > \cdots >\mu_l\, .$$
\end{enumerate}
This filtration is called the \textbf{Harder-Narasimhan filtration}.
\end{theorem}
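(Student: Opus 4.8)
The plan is to establish existence by a maximal-destabilising-subbundle argument, and uniqueness by a slope-comparison argument using Proposition \ref{generalising_line_bundles}. First I would introduce the key notion: for a nonzero holomorphic bundle $\cE$, consider the set of slopes $\mu(\cF)$ as $\cF$ ranges over nonzero sub-bundles of $\cE$. The ranks are bounded by $\rk{\cE}$, and for each fixed rank the degrees of sub-bundles are bounded above (this is the one genuinely analytic input — it follows from the fact that a sub-bundle of rank $r'$ corresponds to a holomorphic section of a Grassmann bundle, or, more elementarily, from boundedness of degrees of sub-sheaves of a coherent sheaf on a curve; I would cite \cite{HN} or \cite{Seshadri} for it). Hence $\mu_{\max}(\cE) := \sup_{\cF} \mu(\cF)$ is attained. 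Among all sub-bundles of maximal slope $\mu_{\max}(\cE)$, pick one, call it $\cE_1$, of maximal rank.

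The first claim is that $\cE_1$ is semi-stable and uniquely determined. Semi-stability is immediate: a sub-bundle of $\cE_1$ is a sub-bundle of $\cE$, hence has slope $\leq \mu_{\max}(\cE) = \mu(\cE_1)$. For uniqueness, suppose $\cE_1'$ is another sub-bundle of slope $\mu_{\max}(\cE)$; then $\cE_1 + \cE_1'$ (the sub-bundle generated by the sum) fits into exact sequences relating it to $\cE_1$, $\cE_1'$ and $\cE_1 \cap \cE_1'$, and a short slope computation — using that $\cE_1 \cap \cE_1'$ has slope $\leq \mu_{\max}$ and that both $\cE_1,\cE_1'$ have slope exactly $\mu_{\max}$ — forces $\mu(\cE_1 + \cE_1') \geq \mu_{\max}$, hence $= \mu_{\max}$; by maximality of $\rk{\cE_1}$ we get $\cE_1 + \cE_1' = \cE_1$, i.e. $\cE_1' \subset \cE_1$, and running the argument with the roles reversed, or re-invoking maximal rank, gives $\cE_1' = \cE_1$. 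Thus $\cE_1$ is the canonical \emph{maximal destabilising sub-bundle}.

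Existence of the filtration then follows by induction on the rank: apply the construction to the quotient $\cE/\cE_1$, obtaining its maximal destabilising sub-bundle $\cE_2/\cE_1 \subset \cE/\cE_1$, and so on; the process terminates since ranks strictly decrease. It remains to check the slope inequality $\mu_1 > \mu_2$, equivalently $\mu(\cE_1) > \mu_{\max}(\cE/\cE_1)$. If some sub-bundle $\cF/\cE_1 \subset \cE/\cE_1$ had slope $\geq \mu(\cE_1) = \mu_{\max}(\cE)$, then from the exact sequence $0 \to \cE_1 \to \cF \to \cF/\cE_1 \to 0$ and the fact that $\mu(\cE_1) = \mu_{\max}(\cE) \geq$ everything, a slope computation shows $\mu(\cF) \geq \mu_{\max}(\cE)$ with $\rk{\cF} > \rk{\cE_1}$, contradicting the maximal-rank choice of $\cE_1$ (when the inequality is strict) or producing a strictly larger sub-bundle of maximal slope (when it is an equality). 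This handles one step; iterating the argument up the filtration, or simply noting that $\cE_i$ is the maximal destabilising sub-bundle of $\cE/\cE_{i-1}$ pulled back, gives $\mu_1 > \mu_2 > \cdots > \mu_l$.

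For uniqueness of the whole filtration, let $0 = \cE_0 \subset \cdots \subset \cE_l = \cE$ and $0 = \cE_0' \subset \cdots \subset \cE_m' = \cE$ be two such filtrations. The key point is that $\cE_1$ must equal $\cE_1'$: the composite $\cE_1 \hookrightarrow \cE \twoheadrightarrow \cE/\cE_1'$ is a homomorphism from the semi-stable bundle $\cE_1$ of slope $\mu_1$ to $\cE/\cE_1'$, which is filtered by semi-stable bundles of slopes $\mu_2', \ldots, \mu_m'$, all $< \mu_1' \leq \mu_1$ (here I use that $\mu_1$ and $\mu_1'$ are both the maximal slope of any sub-bundle, so $\mu_1 = \mu_1'$, hence all the $\mu_j' < \mu_1$ for $j \geq 2$). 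By Proposition \ref{generalising_line_bundles}, a nonzero map between semi-stable bundles forces the slope of the source to be $\leq$ the slope of the target; composing $\cE_1$ with each successive quotient of $\cE/\cE_1'$, every such map vanishes, so the composite $\cE_1 \to \cE/\cE_1'$ is zero, i.e. $\cE_1 \subset \cE_1'$. By symmetry $\cE_1' \subset \cE_1$, so $\cE_1 = \cE_1'$. Now pass to $\cE/\cE_1 = \cE/\cE_1'$, which inherits two Harder–Narasimhan filtrations of strictly smaller length, and conclude by induction on the rank.

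\medskip

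The step I expect to be the main obstacle is not really conceptual but foundational: the existence of $\mu_{\max}(\cE)$, i.e. the boundedness of degrees of sub-bundles of a fixed rank. Making this fully rigorous requires either the coherent-sheaf machinery already invoked in the proof of Lemma \ref{slopes_of_image_and_coimage} (passing to saturated sub-sheaves, using that a coherent sub-sheaf of a locally free sheaf on a curve has bounded degree) or an explicit argument; in the informal style of these notes it is reasonable to cite \cite{HN}. A second, more routine but fiddly point is keeping the slope inequalities honest in the various short exact sequence computations — each is a two-line manipulation of the relations $r = r' + r''$, $d = d' + d''$, exactly as in the proof of Proposition \ref{another_charac_of_stability}, so I would not spell them all out.
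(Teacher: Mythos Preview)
Your overall strategy matches the paper's: both construct the maximal destabilising sub-bundle $\cE_1$ (semi-stable, of slope $\mu_{\max}(\cE)$, maximal rank among such), prove the strict drop $\mu_{\max}(\cE) > \mu_{\max}(\cE/\cE_1)$ from the maximal-rank property, and induct on the rank. The paper packages the preliminaries as Lemmas~\ref{slope_is_bounded}--\ref{towards_unicity_of_destabilising_bundle} and handles the boundedness of $\mu_{\max}$ by embedding $\cE$ in a twist of a trivial bundle, which is precisely the coherent-sheaf input you anticipated.

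Two tactical differences are worth noting. For uniqueness of the destabilising sub-bundle, you argue via the sum $\cE_1 + \cE_1'$ and a slope count; the paper instead composes $\cE_1' \hookrightarrow \cE \to \cE/\cE_1$ and kills it using Lemma~\ref{towards_unicity}. The paper's route is slightly cleaner because it sidesteps the need to saturate $\cE_1 + \cE_1'$ into a genuine sub-bundle. For uniqueness of the full filtration, you compare two given HN filtrations directly, whereas the paper shows intrinsically that in \emph{any} HN filtration the first term is forced to be the destabilising bundle (via the characterisation in Proposition~\ref{charac_of_destabilising_bundle}), then inducts. Both are standard and correct.

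One small gap: in your uniqueness argument you write ``$\mu_1$ and $\mu_1'$ are both the maximal slope of any sub-bundle, so $\mu_1 = \mu_1'$'', but for an \emph{arbitrary} HN filtration this is not yet established --- it is part of what you are proving. The easy repair is to assume without loss of generality that $\mu_1 \geq \mu_1'$, run your vanishing argument to get $\cE_1 \subset \cE_1'$, deduce $\mu_1 \leq \mu_1'$ from the semi-stability of $\cE_1'$, hence $\mu_1 = \mu_1'$, and then reverse roles. Alternatively, one first shows (descending through the filtration and applying Proposition~\ref{generalising_line_bundles} at each step) that any semi-stable sub-bundle of $\cE$ has slope at most $\mu_1$, which gives $\mu_1 = \mu_{\max}(\cE)$ directly; this is essentially what the paper's Proposition~\ref{charac_of_destabilising_bundle} encodes.
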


\noindent Before giving the proof of the theorem, let us make a few comments. The importance of the Harder-Narasimhan filtration cannot be stressed enough. It was instrumental, for instance, in the work of Atiyah and Bott (\cite{AB}) on the topology of the moduli space $\Mod(r,d)$. More generally, and to paraphrase if I may Robert Friedman in \cite{Friedman}, the Harder-Narasimhan filtration is how we understand holomorphic bundles: they have a unique filtration whose successive quotients are semi-stable bundles with slopes in decreasing order. In turn, Jordan-H\"older filtrations are how we understand semi-stable bundles: they are extensions of stable bundles of the same slope (necessarily equal to the slope of the bundle). In contrast, we do not quite understand stable bundles, unless, perhaps, from the point of view of the Narasimhan-Seshadri Theorem: they correspond to irreducible projective unitary representations of the fundamental groups of $\Si_g$.

If we denote $r=\rk\cE$, $d=\deg\cE$, $r_i=\rk(\cE_i/\cE_{i-1})$ and $d_i=\deg(\cE_i/\cE_{i-1})$, we have $$r_1+\cdots +r_l =r\quad \mathrm{and}\quad d_1+\cdots +d_l=d\,.$$ The $r$-tuple $$\umu:=(\underbrace{\mu_1,\cdots,\mu_1}_{r_1\ \mathrm{times}}, \cdots, \underbrace{\mu_l,\cdots,\mu_l}_{r_l\ \mathrm{times}})$$ is called the \textbf{Harder-Narasimhan type of} $\cE$. It is equivalent to the data of the $l$-tuple $(r_i,d_i)_{1\leq i\leq l}$. In the plane of coordinates $(r,d)$, the polygonal line $$P_{\umu}:=\{(0,0),(r_1,d_1),(r_1+r_2,d_1+d_2),\cdots,(r_1+\cdots+r_l,d_1+\cdots+d_l)\}$$ defines a convex polygon called the Harder-Narasimhan, or Shatz, polygon of $\cE$ (see \cite{Shatz}): the slope of the line from $(r_1,d_1)$ to $(r_1+r_2,d_1+d_2)$ is $\frac{d_2}{r_2} = \mu(\cE_2/\cE_1)$, i.e. the slope in the algebro-geometric sense of the bundle $\cE_2/\cE_1$ (hence, perhaps, the terminology), and likewise for higher indices; since $\mu_1>\cdots>\mu_l$, the Shatz polygon is indeed convex. The vector bundle $\cE$ is semi-stable if and only if it is its own Harder-Narasimhan filtration, i.e. if and only if its Shatz polygon is a single straight line going from $(0,0)$ to $(r,d)$. Let us now move on to the proof of Theorem \ref{HN_filtration}.

\begin{lemma}\label{slope_is_bounded}
Let $\cE$ be a non-zero holomorphic vector bundle on $\Si_g$ and set $$\mu_{\max}(\cE) := \sup\, \{\mu(\cF)\, :\, \cF\subset \cE,\, \cF\neq 0\}.$$ Then $\mu_{\max}(\cE)<+\infty$.
\end{lemma}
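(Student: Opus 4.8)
The plan is to produce, for a fixed non-zero $\cE$, a constant $M$ depending only on $\cE$ with $\deg\cF\le M$ for every non-zero sub-bundle $\cF\subseteq\cE$; since $\rk\cF\ge 1$ this bounds $\mu(\cF)=\deg\cF/\rk\cF$ from above, hence bounds $\mu_{\max}(\cE)$. The first step is to reduce to sub-\emph{line}-bundles. If $\cF\subseteq\cE$ is a sub-bundle of rank $s$, I would pass to top exterior powers: $\det\cF=\wedge^s\cF$ is a line bundle of degree $\deg\cF$, and the natural map $\wedge^s\cF\to\wedge^s\cE$ is injective, because it is injective at the generic point of $\Si_g$, where $\cF$ is simply an $s$-dimensional subspace of $\cE$, and $\wedge^s\cF$ is torsion-free. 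Its image is a rank-$1$ subsheaf of $\wedge^s\cE$ of degree $\deg\cF$, whose saturation is an honest sub-line-bundle $\cL_{\cF}\subseteq\wedge^s\cE$ with $\deg\cL_{\cF}\ge\deg\cF$ (the saturation has no smaller degree, as recalled before the proof of Lemma~\ref{slopes_of_image_and_coimage}). Thus it is enough to bound the degrees of sub-line-bundles of each of the finitely many bundles $\wedge^s\cE$, $1\le s\le r$, where $r=\rk\cE$.

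The second step bounds the degree of a sub-line-bundle $\cL\subseteq V$ of an \emph{arbitrary} holomorphic bundle $V$ on $\Si_g$. Taking global sections of the inclusion $\cL\hookrightarrow V$ gives an inclusion $H^0(\Si_g;\cL)\hookrightarrow H^0(\Si_g;V)$, so $h^0(\cL)\le h^0(V)$; and Riemann--Roch for the line bundle $\cL$ on a curve of genus $g$ gives $h^0(\cL)\ge h^0(\cL)-h^1(\cL)=\deg\cL+1-g$. Combining the two yields $\deg\cL\le h^0(V)+g-1$. Applying this with $V=\wedge^s\cE$ and feeding it back into the first step, every non-zero sub-bundle $\cF\subseteq\cE$ satisfies $\deg\cF\le M:=\max_{1\le s\le r}h^0(\Si_g;\wedge^s\cE)+g-1$, a finite number depending only on $\cE$; hence $\mu(\cF)\le\max(M,0)$ and therefore $\mu_{\max}(\cE)\le\max(M,0)<+\infty$.

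The one genuinely substantive ingredient is Riemann--Roch in the second step: it is what converts a cohomological inequality into a numerical bound, and a naive attempt to bound $\deg\cF$ without it tends to run in a circle (a bundle of very negative degree can still have sections). Everything else — the reduction to line subsheaves and the handling of saturations — is routine sheaf bookkeeping on a curve. If one prefers to avoid Riemann--Roch, an alternative is induction on $r$, the case $r=1$ being clear: pick any sub-line-bundle $\cL\subset\cE$ (the saturation of a rank-$1$ subsheaf, which exists since $\cE$ is generically trivial), set $\cQ=\cE/\cL$ of rank $r-1$, and for a given $\cF\subseteq\cE$ bound $\deg\cF$ in terms of $\deg\cL$ and $\mu_{\max}(\cQ)$ by treating separately the cases $\cF\cap\cL=0$ and $\cF\cap\cL\ne 0$, using that a rank-$1$ subsheaf of the line bundle $\cL$ has degree at most $\deg\cL$. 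The exterior-power argument seems the shortest, so that is the one I would write up in detail.
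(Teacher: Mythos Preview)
Your argument is correct, but the paper takes a quite different route. The paper picks a very ample line bundle $\cL$ on $\Si_g$ and uses Cartan--Serre to see that $\cE^*\otimes\cL$ is globally generated, which exhibits $\cE$ as a sub-bundle of $\underline{\C^N}\otimes\cL$ for some $N$; since the trivial bundle is poly-stable and tensoring by a line bundle preserves semi-stability, $\underline{\C^N}\otimes\cL$ is semi-stable of slope $\deg\cL$, and every non-zero sub-bundle $\cF\subset\cE$ then satisfies $\mu(\cF)\le\deg\cL$. So the paper's argument is a one-line embedding into a known semi-stable bundle, leaning on global generation and the already-developed notion of semi-stability. Your approach instead reduces to line sub-bundles via exterior powers and then invokes Riemann--Roch directly; this avoids ample line bundles and the Cartan--Serre theorem, and yields an explicit bound $M=\max_s h^0(\wedge^s\cE)+g-1$ in terms of cohomological data of $\cE$ itself. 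Both proofs ultimately rest on a finiteness theorem for cohomology on a compact curve (global generation after a twist for the paper, finite-dimensionality of $H^0$ and Riemann--Roch for you), so neither is strictly more elementary; yours is perhaps more self-contained relative to the notes, while the paper's is shorter and fits more naturally into the surrounding discussion of semi-stability.
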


\begin{proof}
The proof we give requires algebro-geometric notions and may be skipped upon first reading of these notes. Let $\cL$ be a very ample line bundle on $\Si_g$. Then, by the Cartan-Serre Theorem, $\cE^*\otimes\cL$ is generated by its global sections, so $\cE$ may be seen as a sub-bundle of $\underline{\C^N}\otimes\cL$ for some $N\in\mathbb{N}$, where $\underline{\C^N}$ denotes the product bundle of rank $N$ on $\Si_g$. Since this product bundle is a semi-stable (in fact, poly-stable) bundle and since tensoring by a line bundle preserves semi-stability (see Exercise \ref{tensorisation_by_line_bundle}), $\underline{\C^N}\otimes\cL$ is semi-stable of slope $$\mu(\underline{\C^N}\otimes \cL) =\frac{(\deg\underline{\C^N})\,\rk\cL + (\rk\underline{\C^N})\,\deg\cL}{(\rk\underline{\C^N})\,(\rk\cL)} = \deg\cL\, ,$$ as $\deg\underline{\C^N}=0$ and $\rk\cL=1$. So, for any non-zero sub-bundle $\cF\subset\cE\subset\underline{\C^N}\otimes\cL$, $$\mu(\cF) \leq \mu(\underline{\C^N}\otimes \cL) = \deg\cL\, $$ which shows that $\mu_{\max}(\cE)\leq\deg\cL<+\infty$.
\end{proof}

\begin{lemma}\label{towards_unicity}
Let $\cE$ be a semi-stable bundle and let $\cE'$ be any holomorphic bundle on $\Si_g$. If $\mu(\cE)>\mu_{\max}(\cE')$, then any homomorphism $u:\cE\lra\cE'$ is zero.
\end{lemma}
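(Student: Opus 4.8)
The plan is to argue by contradiction, in direct analogy with the proof of Proposition~\ref{generalising_line_bundles}. Suppose $u:\cE\lra\cE'$ is a non-zero homomorphism. Since $u\neq 0$, its image $\Im u$ is a non-zero sub-bundle of $\cE'$; hence, directly from the definition of $\mu_{\max}(\cE')$ as a supremum over non-zero sub-bundles of $\cE'$, one has $\mu(\Im u)\leq \mu_{\max}(\cE')$. This is the first, essentially tautological, step.

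Next I would bound $\mu(\Im u)$ from below. By Lemma~\ref{slopes_of_image_and_coimage}, $\mu(\Im u)\geq \mu(\cE/\ker u)$. Then I would invoke the semi-stability of $\cE$ to get $\mu(\cE/\ker u)\geq \mu(\cE)$: if $\ker u=0$ this is an equality; and if $\ker u\neq 0$, then (as $u\neq 0$) $\ker u$ is a non-trivial sub-bundle of $\cE$, so $\mu(\ker u)\leq\mu(\cE)$ by semi-stability, which by the chain of equivalences in Proposition~\ref{another_charac_of_stability} is the same as $\mu(\cE/\ker u)\geq\mu(\cE)$. In either case $\mu(\cE/\ker u)\geq\mu(\cE)$.

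Chaining these inequalities together yields
$$
\mu_{\max}(\cE')\ \geq\ \mu(\Im u)\ \geq\ \mu(\cE/\ker u)\ \geq\ \mu(\cE),
$$
which contradicts the hypothesis $\mu(\cE)>\mu_{\max}(\cE')$. Therefore no non-zero $u$ can exist, i.e. $u=0$.

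There is no real obstacle here: the argument is a two-line chain of inequalities, and the only point requiring a moment's care is the case distinction on whether $\ker u$ is zero or a genuine non-trivial sub-bundle (needed to legitimately apply Proposition~\ref{another_charac_of_stability}). Lemma~\ref{slope_is_bounded} is what guarantees $\mu_{\max}(\cE')<+\infty$, so that the hypothesis $\mu(\cE)>\mu_{\max}(\cE')$ is a genuine (non-vacuous) condition, but it is not otherwise used in the logic of the proof.
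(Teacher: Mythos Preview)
Your proof is correct and follows essentially the same route as the paper: assume $u\neq 0$, use Lemma~\ref{slopes_of_image_and_coimage} to get $\mu(\Im u)\geq\mu(\cE/\ker u)$, invoke semi-stability of $\cE$ for $\mu(\cE/\ker u)\geq\mu(\cE)$, and contradict $\mu(\Im u)\leq\mu_{\max}(\cE')$. The paper's version is simply terser (it asserts $\mu(\cE/\ker u)\geq\mu(\cE)$ without spelling out the case split on $\ker u$), but the logic is identical.
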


\begin{proof}
Assume $\Im{u}\neq 0$. By Lemma \ref{slopes_of_image_and_coimage}, $\mu(\Im{u}) \geq \mu(\cE/\ker{u})$. Since $\cE$ is semi-stable, $\mu(\cE/\ker{u})\geq \mu(\cE)> \mu_{\max}(\cE')$ , so $\mu(\Im{u})>\mu_{\max}(\cE')$, a contradiction.
\end{proof}

\begin{lemma}\label{towards_existence_of_destabilising_bundle}
Let $\cE$ be any holomorphic bundle on $\Si_g$. Then there exist sub-bundles of $\cE$ of slope $\mu_{\max}(\cE)$.
\end{lemma}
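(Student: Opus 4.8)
The plan is to combine the a priori boundedness of slopes provided by Lemma \ref{slope_is_bounded} with the discreteness of the degree. Write $r=\rk\cE$ and set $\mu_0:=\mu_{\max}(\cE)$, which is a finite real number by Lemma \ref{slope_is_bounded}. Since $\cE$ is itself a non-zero sub-bundle of $\cE$, the set $S:=\{\mu(\cF)\ :\ \cF\subset\cE,\ \cF\neq 0\}$ is non-empty and bounded above by $\mu_0$, so one can choose a sequence $(\cF_n)_{n\geq 1}$ of non-zero sub-bundles of $\cE$ with $\mu(\cF_n)\to\mu_0$.

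The point to be careful about is that boundedness of $S$ alone does not make it finite: the denominators are bounded (the ranks lie in $\{1,\dots,r\}$), but the numerators are only bounded above. So the first step is to fix the rank. Each $r_n:=\rk\cF_n$ lies in the finite set $\{1,\dots,r\}$, hence by the pigeonhole principle one may pass to a subsequence along which $r_n$ equals a constant $r'\in\{1,\dots,r\}$. Then $\mu(\cF_n)=d_n/r'$ with $d_n:=\deg\cF_n\in\Z$, and $d_n/r'\to\mu_0$ forces $d_n\to r'\mu_0$ in $\R$. A convergent sequence of integers is eventually constant, so there is $d_0\in\Z$ with $d_n=d_0$ for all large $n$; in particular $\mu_0=d_0/r'\in\Q$ and, for such $n$, $\mu(\cF_n)=\mu_0=\mu_{\max}(\cE)$. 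Any such $\cF_n$ is then a sub-bundle of $\cE$ realising the slope $\mu_{\max}(\cE)$, which proves the lemma.

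I do not expect a genuine obstacle here once Lemma \ref{slope_is_bounded} is granted: all the real content is the bound $\mu_{\max}(\cE)<+\infty$, and the rest is the elementary observation that, among sub-bundles of a \emph{fixed} rank, the degrees form a subset of $\Z$ bounded above, hence attaining its maximum. Equivalently, one can avoid sequences altogether: for each $r'$ for which $\cE$ admits a sub-bundle of rank $r'$, let $d(r')$ be the supremum of $\deg\cF$ over rank-$r'$ sub-bundles $\cF$ of $\cE$; by Lemma \ref{slope_is_bounded} this is an integer, attained by some $\cF$, and then $\mu_{\max}(\cE)=\max_{r'} d(r')/r'$ is a maximum over a finite set, hence attained by the corresponding sub-bundle.
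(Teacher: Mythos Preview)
Your proof is correct and follows essentially the same approach as the paper: both use Lemma~\ref{slope_is_bounded} for the upper bound and the boundedness of the ranks (hence of the denominators) to force the supremum to be attained. The paper phrases this more tersely by noting that the set of possible slopes lies in the discrete set $\{d'/r' : d'\in\Z,\ 1\le r'\le\rk\cE\}$, so a bounded-above subset of it achieves its supremum; your sequence argument and your rank-by-rank alternative are just explicit unpackings of that same discreteness observation.
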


\begin{proof}
By Lemma \ref{slope_is_bounded}, the set $$\cM(\cE):=\{\mu(\cF)\, :\, \cF\subset\cE,\, \cF\neq 0\}$$ is bounded from above. Since the possible slopes of sub-bundles of $\cE$ lie in the set $$\left\{\frac{d'}{r'}\, :\, d'\in\Z,\, 1\leq r\leq\rk\cE\right\}\, ,$$ $\cM(\cE)$ is a discrete subset of $\R$. So $\mu_{\max}(\cE) = \sup\cM(\cE)$ is attained: there are sub-bundles $\cF\subset\cE$ such that $\mu(\cF)=\mu_{\max}(\cE)$. 
\end{proof}

\noindent Since, moreover, the set $$\{\rk\cF\, :\, \cF\in\cM(\cE)\}$$ is finite, the notion of sub-bundle of maximal rank in $\cM(\cE)$ is well-defined.

\begin{lemma}\label{towards_unicity_of_destabilising_bundle}
Let $\cE$ be any holomorphic vector bundle on $\Si_g$ and let $\cE_1$ be a sub-bundle of maximal rank among sub-bundles of slope $\mu_{\max}(\cE)$. Then:
\begin{enumerate}
\item $\cE_1$ is semi-stable and $\mu_{\max}(\cE) > \mu_{\max}(\cE/\cE_1)$,
\item if $\cE'_1$ is another sub-bundle of maximal rank among sub-bundles of maximal slope of $\cE$, then $\cE'_1=\cE_1$.
\end{enumerate}
\end{lemma}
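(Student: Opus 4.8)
The plan is to establish the two assertions in order, with Lemma \ref{towards_unicity} doing most of the work for the uniqueness statement.

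For the first part, semi-stability of $\cE_1$ is immediate: any non-trivial sub-bundle $\cF \subset \cE_1$ is in particular a non-zero sub-bundle of $\cE$, hence $\mu(\cF) \le \mu_{\max}(\cE) = \mu(\cE_1)$, which is exactly the semi-stability of $\cE_1$. For the strict inequality $\mu_{\max}(\cE) > \mu_{\max}(\cE/\cE_1)$ (there is nothing to prove if $\cE_1 = \cE$, i.e. if $\cE$ is already semi-stable), I would argue by contradiction. Suppose $\cG \subset \cE/\cE_1$ is a non-zero sub-bundle with $\mu(\cG) \ge \mu_{\max}(\cE)$, and let $\widetilde{\cG} \subset \cE$ be the preimage of $\cG$ under the bundle surjection $\pi : \cE \to \cE/\cE_1$. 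Since $\cG$ is a sub-bundle, $(\cE/\cE_1)/\cG$ is locally free, hence so is $\cE/\widetilde{\cG} \simeq (\cE/\cE_1)/\cG$, so $\widetilde{\cG}$ is a genuine sub-bundle of $\cE$ fitting in a short exact sequence $0 \to \cE_1 \to \widetilde{\cG} \to \cG \to 0$. Since $\widetilde{\cG}$ is an extension of $\cG$ by $\cE_1$, its slope lies between $\mu(\cE_1) = \mu_{\max}(\cE)$ and $\mu(\cG) \ge \mu_{\max}(\cE)$ (the mediant inequality used in the proof of Proposition \ref{another_charac_of_stability}), hence $\mu(\widetilde{\cG}) \ge \mu_{\max}(\cE)$. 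By definition of $\mu_{\max}$ this forces $\mu(\widetilde{\cG}) = \mu_{\max}(\cE)$, while $\rk{\widetilde{\cG}} = \rk{\cE_1} + \rk{\cG} > \rk{\cE_1}$ since $\cG \ne 0$; this contradicts the maximality of $\rk{\cE_1}$ among ranks of sub-bundles of $\cE$ of slope $\mu_{\max}(\cE)$.

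For the second part, the strategy is to show $\cE'_1 \subseteq \cE_1$ and then promote this to an equality using that $\cE_1$ and $\cE'_1$ have the same rank and the same slope. By the first part, both $\cE_1$ and $\cE'_1$ are semi-stable of slope $\mu_{\max}(\cE)$, and $\mu(\cE'_1) = \mu_{\max}(\cE) > \mu_{\max}(\cE/\cE_1)$. Applying Lemma \ref{towards_unicity} to the semi-stable bundle $\cE'_1$ and the bundle $\cE/\cE_1$, the composition $\cE'_1 \hookrightarrow \cE \to \cE/\cE_1$ is zero, so $\cE'_1 \subseteq \ker \pi = \cE_1$ as subsheaves of $\cE$. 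Now $\cE_1/\cE'_1$ is a coherent sheaf of rank $\rk{\cE_1} - \rk{\cE'_1} = 0$, hence a torsion sheaf whose length equals $\deg{\cE_1} - \deg{\cE'_1}$; since $\cE_1$ and $\cE'_1$ have the same rank and slope, this difference vanishes, so $\cE_1/\cE'_1 = 0$ and $\cE'_1 = \cE_1$.

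I expect the only real obstacle to be the bookkeeping in the first part: checking carefully that the preimage of a sub-bundle is again a sub-bundle, and running the slope estimate for $\widetilde{\cG}$ so that the rank-maximality of $\cE_1$ is genuinely violated (and not merely the slope-maximality). Once Lemma \ref{towards_unicity} is available, the second part is essentially formal, the one point needing mild care being the passage from $\cE'_1 \subseteq \cE_1$ to $\cE'_1 = \cE_1$; I phrase that step through the degree (length) of the torsion quotient rather than through torsion-freeness, so as not to have to separately argue that $\cE_1/\cE'_1$ is locally free.
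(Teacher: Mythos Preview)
Your proof is correct and follows essentially the same route as the paper. In part~(1) you pull back a would-be offending sub-bundle of $\cE/\cE_1$ and use the mediant inequality to contradict rank-maximality, exactly as the paper does (the paper just picks $\cE_2$ with $\mu(\cE_2/\cE_1)=\mu_{\max}(\cE/\cE_1)$ rather than phrasing it as a contradiction); in part~(2) both you and the paper invoke Lemma~\ref{towards_unicity} to get $\cE'_1\subset\cE_1$, and your torsion-length argument for the equality is a slightly more explicit version of the paper's one-line ``since $\rk\cE'_1=\rk\cE_1$''.
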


\begin{proof}
This will lead later to the notion of \textit{destabilising bundle} (Proposition \ref{unique_destabilising_bundle}).
\begin{enumerate}
\item If $\cE_1$ is a sub-bundle satisfying the assumptions of the Lemma, it is semi-stable because a sub-bundle $\cF$ of $\cE_1$ is also a sub-bundle of $\cE$, so it satisfies $\mu(\cF)\leq\mu_{\max}(\cE) = \mu(\cE_1)$. Consider now a sub-bundle $\cE_2$ of $\cE$, strictly containing $\cE_1$ and such that $\mu(\cE_2/\cE_1) = \mu_{\max}(\cE/\cE_1)$. Since the sequence $$0\lra \cE_1 \lra \cE_2 \lra \cE_2/\cE_1 \lra 0$$ is exact, $\mu(\cE_1) > \mu(\cE_2/\cE_1)$ if and only if $\mu(\cE_1) > \mu(\cE_2)$ (see Exercise \ref{invariance_under_extensions}). But $\cE_2$ is a sub-bundle of $\cE$, so $\mu(\cE_2)\leq \mu_{\max}(\cE)=\mu(\cE_1)$. Since $\mu(\cE_2)=\mu(\cE_1)$ would contradict the maximality of $\rk\cE_1$ for sub-bundles of $\cE$ having slope $\mu_{\max}(\cE)$, one has $\mu(\cE_1)>\mu(\cE_2)$, which implies that $\mu(\cE_1)>\mu(\cE_2/\cE_1)$, i.e. $\mu_{\max}(\cE) > \mu_{\max}(\cE/\cE_1)$. Note that, here, it is possible to have $\cE_2=\cE$ (this is what happens if $\cE/\cE_1$ is semi-stable).
\item Consider the composed map $$\cE'_1 \hookrightarrow \cE \lra \cE/\cE_1\, .$$ Since, by (1), $\cE'_1$ is also semi-stable and satisfies $\mu(\cE'_1) = \mu_{\max}(\cE) > \mu_{\max}(\cE/\cE_1)$, Lemma \ref{towards_unicity} shows that this composed map is zero. So $\cE'_1\subset \cE_1$, therefore $\cE'_1 =\cE_1$ since $\rk\cE'_1=\rk\cE_1$.
\end{enumerate}
\end{proof}

\begin{proposition}\label{unique_destabilising_bundle}
There exists a unique sub-bundle of $\cE$ whose rank is maximal among sub-bundles of maximal slope of $\cE$. It is called the \textbf{destabilising bundle} of $\cE$, and we shall denote it $\mathrm{G}(\cE)$.
\end{proposition}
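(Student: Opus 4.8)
The plan is to assemble the statement directly from the lemmas just established: the proposition is essentially a repackaging of Lemma \ref{towards_existence_of_destabilising_bundle} (together with the finiteness remark that follows it) and Lemma \ref{towards_unicity_of_destabilising_bundle}, so no new idea is needed.

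First I would settle existence. By Lemma \ref{towards_existence_of_destabilising_bundle}, the non-zero sub-bundles of $\cE$ attaining the slope $\mu_{\max}(\cE)$ form a non-empty family, and, as observed immediately after that lemma, the set of ranks of such sub-bundles is a finite subset of $\{1,\dots,\rk{\cE}\}$. A finite set of integers has a largest element, so there is at least one sub-bundle $\cE_1\subset\cE$ whose rank is maximal among sub-bundles of maximal slope.

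Then uniqueness follows from Lemma \ref{towards_unicity_of_destabilising_bundle}. Suppose $\cE_1$ and $\cE'_1$ are two sub-bundles of $\cE$, each of maximal rank among sub-bundles of slope $\mu_{\max}(\cE)$. Part (1) of that lemma, applied to $\cE_1$, gives that $\cE_1$ is semi-stable with $\mu(\cE_1)=\mu_{\max}(\cE) > \mu_{\max}(\cE/\cE_1)$; part (2) then yields $\cE'_1=\cE_1$. Hence the sub-bundle in question is unique, which makes the notation $\mathrm{G}(\cE)$ legitimate.

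There is no genuine obstacle here once the lemmas are in hand; whatever difficulty exists has already been dealt with inside Lemma \ref{towards_unicity_of_destabilising_bundle}, namely the point that the composite $\cE'_1\hookrightarrow\cE\lra\cE/\cE_1$ must vanish — since $\cE'_1$ is semi-stable of slope strictly greater than $\mu_{\max}(\cE/\cE_1)$, Lemma \ref{towards_unicity} applies — so that $\cE'_1\subset\cE_1$, and then $\cE'_1=\cE_1$ because the two have the same (maximal) rank.
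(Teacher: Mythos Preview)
Your proposal is correct and follows exactly the same route as the paper's proof, which simply cites Lemma~\ref{towards_existence_of_destabilising_bundle} (plus the finiteness-of-ranks remark) for existence and point~(2) of Lemma~\ref{towards_unicity_of_destabilising_bundle} for uniqueness. Your additional recap of the argument inside Lemma~\ref{towards_unicity_of_destabilising_bundle} is accurate but unnecessary here, since that work has already been done.
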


\begin{proof}
The existence follows from Lemma \ref{towards_existence_of_destabilising_bundle} and the unicity follows from point (2) of Lemma \ref{towards_unicity_of_destabilising_bundle}.
\end{proof}

\noindent Oddly enough for this standard terminology, if $\cE$ is semi-stable, it is its own destabilising bundle, and the converse also holds. The next result is a characterisation of the destabilising bundle which is a converse to Point (1) of Lemma \ref{towards_unicity_of_destabilising_bundle}.

\begin{proposition}\label{charac_of_destabilising_bundle}
If $\cE_1$ is a sub-bundle of $\cE$ such that $\cE_1$ is semi-stable and satisfies $\mu_{\max}(\cE)>\mu_{\max}(\cE/\cE_1)$, then $\cE_1$ is the destabilising bundle of $\cE$.
\end{proposition}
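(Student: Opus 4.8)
The plan is to identify $\cE_1$ with the destabilising bundle $\mathrm{G}(\cE)$ furnished by Proposition \ref{unique_destabilising_bundle}. Write $G := \mathrm{G}(\cE)$; recall from Lemma \ref{towards_unicity_of_destabilising_bundle} that $G$ is semi-stable, has slope $\mu(G) = \mu_{\max}(\cE)$, and satisfies $\mu_{\max}(\cE) > \mu_{\max}(\cE/G)$, while by construction $G$ has maximal rank among sub-bundles of $\cE$ of slope $\mu_{\max}(\cE)$.

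First I would establish the inclusion $G \subseteq \cE_1$. Consider the composite homomorphism $G \hookrightarrow \cE \lra \cE/\cE_1$. Its source is semi-stable of slope $\mu(G) = \mu_{\max}(\cE)$, which by hypothesis is strictly greater than $\mu_{\max}(\cE/\cE_1)$; hence Lemma \ref{towards_unicity} forces this composite to vanish, which is exactly the statement that $G$ is contained in the sub-bundle $\cE_1$ of $\cE$.

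Next I would pin down the slope and the rank of $\cE_1$. Since $G$ is a non-zero sub-bundle of the semi-stable bundle $\cE_1$, we get $\mu_{\max}(\cE) = \mu(G) \leq \mu(\cE_1)$; and since $\cE_1$ is itself a sub-bundle of $\cE$, we have $\mu(\cE_1) \leq \mu_{\max}(\cE)$. Hence $\mu(\cE_1) = \mu_{\max}(\cE)$, so $\cE_1$ is a sub-bundle of $\cE$ of maximal slope. Comparing ranks, $G \subseteq \cE_1$ gives $\rk G \leq \rk\cE_1$, while the maximality of $\rk G$ among sub-bundles of maximal slope gives $\rk\cE_1 \leq \rk G$; therefore $\rk\cE_1 = \rk G$. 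A sub-bundle of a sub-bundle of the same rank on a curve must coincide with it (the quotient $\cE_1/G$ is a torsion-free coherent sheaf of rank $0$, hence zero), so $\cE_1 = G = \mathrm{G}(\cE)$; alternatively, one may invoke the uniqueness statement in Lemma \ref{towards_unicity_of_destabilising_bundle}(2) directly, since $\cE_1$ has now been shown to be a sub-bundle of maximal rank among sub-bundles of maximal slope of $\cE$.

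I do not expect a genuine obstacle here: the whole argument is a short chase of slope and rank inequalities resting on Lemmas \ref{towards_unicity} and \ref{towards_unicity_of_destabilising_bundle}. The only points demanding a little care are bookkeeping ones — checking that $G$, being contained in $\cE_1$ as a subsheaf, is genuinely a sub-bundle of $\cE_1$ and that the resulting quotient is again a bundle (both automatic on a smooth curve, where torsion-free coherent sheaves are locally free), and keeping track that all the maximal slopes in play are finite by Lemma \ref{slope_is_bounded}.
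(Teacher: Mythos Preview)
Your proof is correct and follows essentially the same route as the paper: show $\mathrm{G}(\cE)\subset\cE_1$ via Lemma \ref{towards_unicity} applied to the composite $\mathrm{G}(\cE)\hookrightarrow\cE\to\cE/\cE_1$, deduce $\mu(\cE_1)=\mu_{\max}(\cE)$ from semi-stability of $\cE_1$, and conclude equality from the rank-maximality of $\mathrm{G}(\cE)$. Your write-up is a touch more explicit about the rank comparison and the final identification, but the argument is the same.
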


\begin{proof}
Consider the composed map $$\mathrm{G}(\cE) \hookrightarrow \cE \lra \cE/\cE_1\, ,$$ where $\mathrm{G}(\cE)$ is the destabilising bundle of $\cE$. By Lemma \ref{towards_unicity_of_destabilising_bundle}, $\mathrm{G}(\cE)$ is semi-stable and satisfies $\mu(\mathrm{G}(\cE)) = \mu_{\max}(\cE)> \mu_{\max}(\cE/\cE_1)$. So, by Lemma \ref{towards_unicity}, this map is zero and therefore $\mathrm{G}(\cE)\subset\cE_1$. Since $\cE_1$ is semi-stable, this implies that $\mu(\mathrm{G}(\cE))\leq \mu(\cE_1)$, so $\mu(\mathrm{G}(\cE))=\mu(\cE_1)$, since $\mu(\mathrm{G}(\cE))=\mu_{\max}(\cE)$. But $\mathrm{G}(\cE)\subset\cE_1$ so $\mathrm{G}(\cE)=\cE_1$, by maximality of $\mathrm{G}(\cE)$ among sub-bundles of maximal slope of $\cE$.
\end{proof}

\noindent We are now in a position to prove the Harder-Narasimhan Theorem.

\begin{proof}[Proof of Theorem \ref{HN_filtration}]
We first prove the unicity of a filtration $$0=\cE_0\subset\cE_1\subset \cdots \subset \cE_l=\cE$$ satisfying $\cE_i/\cE_{i-1}$ semi-stable for all $i\geq 1$ and $$\mu(\cE_1/\cE_0) > \cdots > \mu(\cE_l/\cE_{l-1})$$ (a Harder-Narasimhan filtration) by showing that, for all $i\geq 1$, $\cE_{i}/\cE_{i-1}$ is the destabilising bundle of $\cE/\cE_{i-1}$. The proof is by induction on the rank of $\cE$. If $\rk\cE=1$, the only possible filtration of $\cE$ is $0=\cE_0\subset\cE_1=\cE$, and since a bundle of rank $1$ is semi-stable, $\cE_1=\cE$ is indeed the destabilising bundle of $\cE/\cE_0=\cE$. Let now $$0=\cE_0\subset\cE_1\subset \cdots \subset \cE_l=\cE$$ be a Harder-Narasimhan filtration in rank $r$. Then $$0=\cE_1/\cE_1\subset \cE_2/\cE_1\subset \cdots \subset \cE_l/\cE_1=\cE/\cE_1$$ is a Harder-Narasimhan filtration in rank $<r$ so, by the induction hypothesis, $$\left(\cE_i/\cE_{1}\right) \big/ \left(\cE_{i-1}/\cE_1\right) \simeq \cE_i/\cE_{i-1}$$ is the destabilising bundle of $$\left(\cE/\cE_{1}\right) \big/ \left(\cE_{i-1}/\cE_1\right) \simeq \cE/\cE_{i-1}\, .$$ It remains to show that $\cE_1$ is the destabilising bundle of $\cE$. Since we have just seen that $\cE_2/\cE_1 = \mathrm{G}(\cE/\cE_1)$, we have $$\mu_{\max}(\cE/\cE_1) = \mu(\cE_2/\cE_1) < \mu(\cE_1) \leq \mu_{\max}(\cE)$$ because $\cE_0\subset\cE_1 \subset\cdots\subset \cE_l$ is, by assumption, a Harder-Narasimhan filtration. For the same reason, we have, moreover, that $\cE_1$ is semi-stable. So Lemma \ref{charac_of_destabilising_bundle} shows that $\cE_1=\mathrm{G}(\cE)$.

To prove the existence of the Harder-Narasimhan filtration, we proceed again by induction on the rank, the result being obvious in rank $1$. If $\cE$ has rank $r$, set $\cE_1=\mathrm{G}(\cE)$, the destabilising bundle of $\cE$. Then, by Lemma \ref{towards_unicity_of_destabilising_bundle}, $\cE_1$ is semi-stable and $\cE/\cE_1$ is a bundle of rank $<r$ satisfying $\mu_{\max}(\cE/\cE_1) < \mu_{\max}(\cE) = \mu(\cE_1)$. By the induction hypothesis, $\cE/\cE_1$ has a Harder-Narasimhan filtration, which pulls back to a filtration $$\cE_1\subset \cE_2 \subset \cdots \cE_l=\cE$$ such that, for $i\geq 2$, $\cE_i/\cE_{i-1}$ is semi-stable and $$\mu(\cE_2/\cE_1) > \cdots > \mu(\cE_l/\cE_{l-1})\, .$$ Since $\mu(\cE_2/\cE_1) \leq \mu_{\max}(\cE/\cE_1)$ (in fact, it is an equality since $\cE_2/\cE_1=\mathrm{G}(\cE/\cE_1)$), one has $\mu(\cE_2/\cE_1) < \mu(\cE_1)$ so $$0=\cE_0\subset\cE_1\subset \cdots \subset \cE_l=\cE$$ is a Harder-Narasimhan filtration of $\cE$.
\end{proof}

The Harder-Narasimhan filtration was used by Atiyah and Bott in \cite{AB} to define a stratification of the space of all holomorphic structures (Dolbeault operators) on a smooth complex vector bundle $E$ of rank $r$ and degree $d$, which they showed to be equivariantly perfect for the action of the complex gauge group $\cG_E$. Two holomorphic structures belong to the same stratum $\mathcal{C}_{\umu}$ if and only if the holomorphic bundles that they define have the same Harder-Narasimhan type $\umu$. The Harder-Narasimhan strata have finite codimension and are $\cG_E$-invariant. The semi-stable bundles form the unique open stratum of this stratification and, as a consequence of equivariant perfection, the $\cG_E$-equivariant cohomology of this stratum may be computed. When $r\wedge d=1$, one can deduce from this computation, among other topological information, the (rational) Betti numbers of $\Mod(r,d)$ (see \cite{AB}). It was later shown by Daskalopoulos that the Harder-Narasimhan strata were the Morse strata of the Yang-Mills functional (\cite{Dask}), confirming a conjecture of Atiyah and Bott: the Yang-Mills functional is a Morse-Bott function whose Morse flow converges and whose critical manifolds consist of bundles which are direct sums of the form $$\cE'_1\oplus \cdots \oplus \cE'_l\,$$ where $\cE'_i$ is a \textit{poly}-stable bundle of rank $r_i$ and degree $d_i$ satisfying $$\frac{d_1}{r_1} > \cdots > \frac{d_l}{r_l}\, ,$$ as well as $r_1+\cdots + r_l=r$ and $d_1+\cdots+d_l=d$. The absolute minima of the Yang-Mills functional are poly-stable bundles of rank $r$ and degree $d$ (that is, critical points of the form above satisfying the additional condition that $l=1$). In particular, the Morse flow of the Yang-Mills functional takes a semi-stable bundle $\cE$ to the graded object $\gr(\cE)$ associated to any Jordan-H\"older filtration of $\cE$, and the latter is an absolute minimum of the Yang-Mills functional. In general, the Morse flow takes $\cE$ to $\cE'_1\oplus \cdots \oplus \cE'_l$, where $\cE'_i$ is the graded object associated to the semi-stable bundle $\cE_i/\cE_{i-1}$ provided by the Harder-Narasimhan filtration of $\cE$ (in particular, $\rk\cE'_i=r_i$ and $\deg\cE'_i=d_i$ where $r_i=\rk(\cE_i/\cE_{i-1})$ and $d_i=\deg(\cE_i/\cE_{i-1})$, so $(r_i,d_i)_{1\leq i\leq l}$ corresponds to the Harder-Narasimhan type $\umu$ of $\cE$). This graded object is indeed a critical point of the Yang-Mills functional.

\subsection{Exercises}\hfill

\begin{multicols}{2}

\begin{exercise}
Show that a semi-stable holomorphic which has coprime rank and degree is in fact stable.
\end{exercise}

\begin{exercise}
Show that $\mu(\cE^*) = -\mu(\cE)$ and $\mu(\cE\otimes\cE') = \mu(\cE) + \mu(\cE')$. Compute $\mu(\Hom(\cE,\cE'))$.% and $\mu(\det(\cE))$.
\end{exercise}

\begin{exercise}\label{invariance_under_extensions}
Consider the extension (short exact sequence) $$0\lra \cE' \lra \cE \lra \cE''\lra 0$$ of $\cE''$ by $\cE'$.\\ \textbf{a.} Assume that $\cE'$ and $\cE''$ are semi-stable and both have slope $\mu$. Show that $\mu(\cE)=\mu$ and that $\cE$ is semi-stable.\\ \textbf{b.} Show that if $\cE'$ and $\cE''$ are stable and have the same slope, $\cE$ is not stable. \textit{Hint}: By \textbf{a}, $\mu(\cE)=\mu(\cE')$ and $\cE'$ is a sub-bundle of $\cE$.\\ \textbf{c.} Let $\mu$, $\mu'$, $\mu''$ be the respective slopes of the bundles $\cE$, $\cE'$, $\cE''$. Show that $$\mu' < \mu \Leftrightarrow \mu' < \mu'' \Leftrightarrow \mu < \mu''\, ,$$ $$\mu' = \mu \Leftrightarrow \mu' = \mu'' \Leftrightarrow \mu = \mu''\, ,$$ $$\mu' > \mu \Leftrightarrow \mu' > \mu'' \Leftrightarrow \mu > \mu''\, .$$ \textbf{c.} Suppose that the three bundles $\cE$, $\cE'$ and $\cE''$ have the same slope. Show that $\cE$ is semi-stable if and only if $\cE'$ and $\cE''$ are semi-stable.
\end{exercise}

\begin{exercise}\label{rank_and_degree_of_a_polystable_bundle}
Let $\cE$ and $\cE'$ be two semi-stable bundles of slope $\mu$ and assume that $\cE$ and $\cE'$ are $S$-equivalent. Show that $\rk\cE=\rk\cE'$ and $\deg\cE=\deg\cE'$. \textit{Hint}: Consider the poly-stable object $\cF_1\oplus \cdots \oplus \cF_k$ associated to an arbitrary Jordan-H\"older filtration of $\cE$, and show that $\deg(\cF_1) + \cdots + \deg(\cF_k) = \deg(\cE)$. Beware that the direct sum $\cF_1\oplus \cdots \oplus \cF_k$ is not isomorphic to $\cE$ in general.
\end{exercise}

\begin{exercise}\label{tensorisation_by_line_bundle}
Let $\cE$ be a holomorphic vector bundle and let $\cL$ be a holomorphic line bundle.\\ \textbf{a.} Show that $\mu(\cE\otimes\cL)=\mu(\cE)+\mu(\cL)$.\\ \textbf{b.} Show that $\cE$ is stable (resp. semi-stable) if and only if $\cE\otimes\cL$ is stable (resp. semi-stable). \textit{Hint}: Sub-bundles of $\cE\otimes\cL$ are of the form $\cF\otimes\cL$, where $\cF$ is a sub-bundle of $\cE$.
\end{exercise}

\begin{exercise}\label{poly_stability_again}
Show that a vector bundle $\cE$ is poly-stable if and only if it is semi-stable and admits a reduction of its structure group $\GL(r;\C)$ to a sub-group $L$ of the form $\GL(r_1;\C)\times \cdots \times \GL(r_k;\C)$ with $r_1+\cdots +r_k=r$ such that the associated $L$-vector bundle is stable (that is, with respect to sub-bundles which are also direct sums). 
\end{exercise}

\end{multicols}

\section{The moduli variety as a K\"ahler quotient}\label{YM_connections}

\subsection{K\"ahler reduction}\label{kahler_red}

In the next subsection, we shall see how Donaldson's theorem implies that the moduli space $\Mod(r,d)$, of $S$-equivalence classes of semi-stable vector bundles of rank $r$ and degree $d$ on $\Si_g$, is a K\"ahler quotient, so we briefly recall the theory of such quotients (see for instance \cite{McDuff,HKLR}).

\begin{definition}[K\"ahler manifold]
A K\"ahler manifold $(M,J,g,\w)$ is a complex analytic manifold $(M,J)$ endowed with a Riemannian metric $g$, such that $\w := g(J\,\cdot\,,\,\cdot\,)$ is a symplectic form.
\end{definition}
 
 \noindent A less condensed definition would be as follows. We think of a complex analytic manifold $M$ as an even-dimensional real manifold with an \textit{integrable} almost complex structure $J$ (an almost complex structure being, by definition, an endomorphism of $TM$ squaring to $-\Id_{TM}$, which can only happen if $M$ is even-dimensional). Then the Riemannian manifold $(M,g)$ is called K\"ahler if

\begin{enumerate}
\item $J$ is an isometry for $g$: $g(Jv,Jw) = g(v,w)$,
\item the associated non-degenerate $2$-form $\w:=g(J\,\cdot\,,\,\cdot\,)$ is \textit{closed}.
\end{enumerate}

\noindent Then the metric $g$ locally \textit{derives from a potential} and the complex structure commutes to the covariant derivative of the Levi-Civit\`a connection of the metric. Note that, if $\dim_{\R}M=2$ and condition (1) is satisfied, then condition (2) is necessarily satisfied, 

An action of a Lie group is called a K\"ahler action if it preserves $g$, $J$ and $\w$. As a matter of fact, it suffices to preserve two of those to preserve the third one. We now consider  the case of a \textit{Hamiltonian} action of a \textit{compact} connected Lie group $\bK$ on a K\"ahler manifold $(M,J,g,\w)$, i.e. a K\"ahler action for which there is an equivariant momentum map $$\mu: M \lra \fk^*.$$ We recall that the fundamental vector field associated to an element $X\in\fk$ is defined by $$X^{\#}_x = \frac{d}{dt}|_{t=0} \big( \exp(tX)\cdot x\big)$$ (so the map $\chi: X \lmt X^{\#}$ is a homomorphism of Lie algebras $\fk\lra \Ga(TM)$ when the Lie bracket on $\fk$ is defined by means of the bracket of the corresponding \textit{right}-invariant vector fields, see \cite{McDuff}, Remark 3.3) and that the momentum map relation is $$\w(X^{\#},\,\cdot\,) = d<\mu,X> =<T\mu,X>$$ for all $X\in\fk$ (where $<\mu,X>$ is the function defined on $M$ by $x\lmt <\mu(x),X>$). In particular, if the action is Hamiltonian, then there is a map $$\mu^{\#}: \begin{array}{ccc} \fk & \lra & C^{\infty}(M;\R)\\ X & \lmt &<\mu,X>\end{array}$$ (sometimes called the co-momentum map), which is a Lie algebra homomorphism with respect to the Poisson bracket $$\{f,g\} := \w(\nabla^{\mathrm{symp}}_f,\nabla^{\mathrm{symp}}_g)$$ on $C^{\infty}(M;\R)$ and lifts $\chi$ to $C^{\infty}(M;\R)$ in the following sense: the diagramme

$$
\xymatrix{
&  C^{\infty}(M;\R) \ar[d]\\
\fk \ar@{-->}[ur]^{\mu^{\#}} \ar[r]^{\chi} & \Ga(TM)
}
$$

\noindent where the vertical map is the Lie algebra homomorphism taking a function $f:M\lra \R$ to the associated Hamiltonian vector field $\nabla^{\mathrm{symp}}_f$ defined by $\w(\nabla^{\mathrm{symp}}_f,\,\cdot\,) = df$, is a commutative diagramme. Recall that $\bK$ acts on $\fk^*$ by the co-adjoint action $\Adet{k} \xi = \xi \circ \Ad{k^{-1}}$, and that the centre of $\fk^*$ is the set of elements $\xi\in\fk^*$ on which $\bK$ acts trivially (i.e. $\mathrm{Ad}$-invariant linear forms on $\fk$). The main result of this subsection is as follows.

\begin{theorem}[K\"ahler reduction]\label{thm:Kahler_red}
Let $\bK$ be a compact Lie group acting on the K\"ahler manifold $(M,J,g,\w)$ with equivariant momentum map $\mu:M\lra\fk^*$. Let $\xi$ be an element of the centre of $\fk^*$, and assume that the action of $\bK$ on the level set $\mu^{-1}(\{\xi\})$ is free.
We denote $i:\mu^{-1}(\{\xi\})\hookrightarrow M$ the canonical inclusion. Then:
\begin{enumerate}
\item $\mu^{-1}(\{\xi\})$ is a submanifold of $M$, upon which $\bK$ acts with smooth quotient, and the map $$p: \mu^{-1}(\{\xi\}) \lra \mu^{-1}(\{\xi\})/\bK$$ is a principal fibration of group $\bK$.
\item The $2$-form $i^*\w$ on $\mu^{-1}(\{\xi\})$ is basic with respect to the projection $p$.
\item The corresponding $2$-form $\wred$ on $\mu^{-1}(\{\xi\})/\bK$, defined by $p^*\wred=i^*\w$, is a symplectic form.
\item The Riemannian metric $g$ and the complex structure $J$ on $M$ induce a Riemannian metric and a compatible almost complex structure on $\mu^{-1}(\{\xi\})/\bK$. The associated $2$-form is $\wred$, which is symplectic.
\item $\mu^{-1}(\{\xi\})/\bK$ is a K\"ahler manifold with respect to these induced metric and almost complex structure.
\end{enumerate}
$\mu^{-1}(\{\xi\})/\bK$ is called the K\"ahler quotient of $M$ at $\xi$.
\end{theorem}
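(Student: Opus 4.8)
The plan is to prove (1)--(5) in that order, each building on the previous, with the integrability of the reduced almost complex structure as the only genuinely delicate point.

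\emph{Steps (1) and (2).} The starting point is the pair of standard consequences of the momentum map identity $\w(X^{\#},\,\cdot\,) = d<\mu,X>$, valid at every $x\in M$: the image of $T_x\mu$ is the annihilator of $Lie(\bK_x)$, and $\ker T_x\mu = \big(T_x(\bK\cdot x)\big)^{\w_x}$, the $\w_x$-orthogonal of the orbit direction. Because $\xi$ lies in the centre of $\fk^*$, equivariance gives $\mu(k\cdot x) = \Adet{k}\xi = \xi$, so $\mu^{-1}(\{\xi\})$ is $\bK$-invariant; freeness of the action on it forces $\bK_x=\{e\}$, hence $T_x\mu$ is surjective and $\xi$ is a regular value along the whole level set. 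So $\mu^{-1}(\{\xi\})$ is an embedded submanifold with $T_x\mu^{-1}(\{\xi\}) = \ker T_x\mu$, and, $\bK$ being compact, the free action is proper, so by the quotient manifold theorem $\mu^{-1}(\{\xi\})/\bK$ is a smooth manifold and $p$ a principal $\bK$-bundle. For (2) I recall that a form on $\mu^{-1}(\{\xi\})$ descends along $p$ precisely when it is $\bK$-invariant and horizontal: $i^*\w$ is $\bK$-invariant since $\w$ is, and horizontal since for $X\in\fk$ and $v\in T_x\mu^{-1}(\{\xi\})=\ker T_x\mu$ one has $\w(X^{\#}_x,v) = <T_x\mu(v),X> = 0$.

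\emph{Steps (3) and (4).} Let $\wred$ be the unique $2$-form with $p^*\wred=i^*\w$. It is closed because $p^*d\wred = d\,i^*\w = i^*d\w = 0$ and $p^*$ is injective on forms. For non-degeneracy the key linear-algebraic input is that orbit directions are isotropic along the level set: by equivariance, $\w(X^{\#}_x,Y^{\#}_x) = <\mu(x),[X,Y]> = <\xi,[X,Y]> = 0$, an $\mathrm{Ad}^*$-invariant $\xi$ annihilating all brackets. Writing $T_x\mathcal{O} := T_x(\bK\cdot x)$, this gives $T_x\mathcal{O}\subseteq (T_x\mathcal{O})^{\w_x} = T_x\mu^{-1}(\{\xi\})$, so the restriction of $\w_x$ to $(T_x\mathcal{O})^{\w_x}$ has radical $(T_x\mathcal{O})^{\w_x}\cap\big((T_x\mathcal{O})^{\w_x}\big)^{\w_x} = (T_x\mathcal{O})^{\w_x}\cap T_x\mathcal{O} = T_x\mathcal{O}$; it therefore descends to a non-degenerate form on $T_x\mu^{-1}(\{\xi\})/T_x\mathcal{O} \simeq T_{[x]}\big(\mu^{-1}(\{\xi\})/\bK\big)$, which is $\wred$ by construction.

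\emph{Step (5), the reduced Kähler data.} Using $\w = g(J\,\cdot\,,\,\cdot\,)$ and the fact that $J$ is a $g$-isometry (hence $g$-skew), one computes $T_x\mu^{-1}(\{\xi\}) = (T_x\mathcal{O})^{\w_x} = J\big((T_x\mathcal{O})^{\perp_g}\big)$. Thus the horizontal subspace $H_x := (T_x\mathcal{O})^{\perp_g}\cap T_x\mu^{-1}(\{\xi\})$ is $W\cap JW$ with $W := (T_x\mathcal{O})^{\perp_g}$, and it is $J$-invariant since $J(W\cap JW) = JW\cap J^2W = JW\cap W$. Isotropy once more, now via $\w(X^{\#}_x,JX^{\#}_x) = g(X^{\#}_x,X^{\#}_x) > 0$ for $X^{\#}_x\neq 0$, forces $T_x\mathcal{O}\cap J(T_x\mathcal{O}) = \{0\}$; combined with the above this yields the $J$-invariant, $g$-orthogonal splitting $T_xM = H_x \oplus \big(T_x\mathcal{O}\oplus J(T_x\mathcal{O})\big)$ and, in particular, $T_x\mu^{-1}(\{\xi\}) = T_x\mathcal{O}\oplus H_x$, so that $dp_x$ restricts to an isomorphism $H_x \overset{\simeq}{\lra} T_{[x]}\big(\mu^{-1}(\{\xi\})/\bK\big)$. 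Transporting $g|_{H_x}$ and $J|_{H_x}$ through these isomorphisms defines a Riemannian metric $g^{\mathrm{red}}$ and an almost complex structure $J^{\mathrm{red}}$ on the quotient; since $H$, $g$ and $J$ are $\bK$-invariant and $\bK$ acts by Kähler isometries, these do not depend on the point chosen in a fibre, hence are well defined, and one checks at once that $g^{\mathrm{red}}(J^{\mathrm{red}}\,\cdot\,,\,\cdot\,) = \wred$. This gives (4): $(g^{\mathrm{red}},J^{\mathrm{red}},\wred)$ is a compatible triple with $\wred$ closed and non-degenerate.

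\emph{The main obstacle.} What remains for (5) --- and the one substantive difficulty --- is that $J^{\mathrm{red}}$ is \emph{integrable}, i.e.\ that it upgrades the almost-Kähler quotient to a genuine Kähler manifold. I would prove $N_{J^{\mathrm{red}}}=0$ by a (somewhat delicate) computation of the Nijenhuis tensor: lifting vector fields from the quotient to $H$-valued ($\bK$-invariant) fields on $\mu^{-1}(\{\xi\})$ and decomposing along the $J$-invariant orthogonal splitting $T_xM = H_x\oplus\big(T_x\mathcal{O}\oplus J(T_x\mathcal{O})\big)$, one expresses $N_{J^{\mathrm{red}}}$ in terms of $N_J$, which vanishes because $J$ is integrable on $M$; the complication is precisely that $\mu^{-1}(\{\xi\})$ is not a complex submanifold, so one must track the out-of-level-set components carefully. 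Alternatively, one may simply invoke the treatment of Kähler reduction in \cite{McDuff,HKLR}. Once integrability is secured, $\big(\mu^{-1}(\{\xi\})/\bK,\,J^{\mathrm{red}},\,g^{\mathrm{red}},\,\wred\big)$ is by definition a Kähler manifold, which is (5), and the argument is complete.
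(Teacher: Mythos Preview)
Your proposal is correct and follows essentially the same route as the paper: the same momentum-map lemmas ($\ker T_x\mu = (T_x\mathcal{O})^{\w_x}$, $\Im T_x\mu = \fk_x^{\,0}$), the same coisotropy argument for non-degeneracy of $\wred$, the same $J$-invariant horizontal space $H_x$ (your $(T_x\mathcal{O})^{\perp_g}\cap\ker T_x\mu$ coincides with the paper's orthogonal complement of $(\ker T_x\mu)^{\w_x}$ in $\ker T_x\mu$), and the same Nijenhuis-tensor strategy for integrability. The only difference is one of execution in step (5): where you sketch the lift-and-decompose argument and flag the out-of-level-set subtlety, the paper simply takes $\bK$-invariant (projectable) vector fields $X,Y$, notes that $JX,JY$ are again projectable, and writes $N_{J^{\mathrm{red}}}(p_*X,p_*Y)=p_*N_J(X,Y)=0$ in one line --- brisk, but the same idea you describe.
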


\noindent Before giving a proof of this theorem, we point out that we can assume that $\xi=0$. Indeed, if $\xi\neq0$, then $\mu':=\mu-\xi$ is also an equivariant momentum map (because $\Adet{k}\xi = \xi$ for all $k\in \bK$), and $\mu^{-1}(\{\xi\}) = (\mu')^{-1}(\{0\})$.

If $x\in M$, we denote $$\bK_x = \{k\in \bK\ |\ k\cdot x = x\}$$ the stabiliser of $x$ in $\bK$, and $\fk_x=\mathrm{Lie}(\bK_x)$ its Lie algebra. Then $$\fk_x=\{X\in\fk\ |\ X^{\#}_x=0\}.$$ The anihilator of $\fk_x\subset \fk$ in $\fk^*$ is the vector space $$\fk_X^{\, 0}=\{\xi\in\fk^*\ |\ \forall X\in\fk_x,\, \xi(X)=0\}\subset\fk^*.$$ Recall that the tangent space at $x$ to the orbit $\bK\cdot x\subset M$ is $$T_x(\bK\cdot x) = \{X^{\#}_x\, :\, X\in\fk\}.$$ We denote $$\big(T_x(\bK\cdot x)\big)^{\w_x} = \{v\in T_xM\ |\ \forall w \in T_x(\bK \cdot x), \w_x(v,w)=0\}$$ the symplectic complement to $T_x(\bK\cdot x)$ in $T_xM$.

\begin{lemma}\label{kernel_and_image}
Let $x$ be a point in $\fibre$. Then
\begin{enumerate}
\item $\ker T_x\mu = (T_x(\bK\cdot x))^{\w_x}$.
\item $\Im T_x\mu = \fk_x^{\, 0}$.
\end{enumerate}
\end{lemma}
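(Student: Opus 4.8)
The plan is to exploit the relation between the momentum map relation and the symplectic form. The key computational input is the momentum map identity $\w_x(X^\#_x, v) = \langle (T_x\mu)(v), X\rangle$ for all $X \in \fk$ and all $v \in T_x M$, which holds because $\mu$ is an equivariant momentum map (differentiate $\langle \mu, X\rangle$ at $x$). Everything else is essentially a transcription of this identity into statements about kernels, images and annihilators, using the non-degeneracy of $\w$.

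For part (1), I would argue as follows. A vector $v \in T_x M$ lies in $\ker T_x\mu$ if and only if $(T_x\mu)(v) = 0$ in $\fk^*$, i.e. if and only if $\langle (T_x\mu)(v), X\rangle = 0$ for every $X \in \fk$. By the momentum map relation this is equivalent to $\w_x(X^\#_x, v) = 0$ for every $X \in \fk$, and since $T_x(\bK\cdot x) = \{X^\#_x : X \in \fk\}$, this says exactly that $v \in (T_x(\bK\cdot x))^{\w_x}$. So $\ker T_x\mu = (T_x(\bK\cdot x))^{\w_x}$, with no use of the hypothesis $x \in \fibre$ and no use of freeness.

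For part (2), I would compute the annihilator of the image. Since $(\Im T_x\mu)^0$ (the annihilator in $\fk$, via the pairing $\fk^* \times \fk \to \R$) consists of those $X \in \fk$ with $\langle (T_x\mu)(v), X\rangle = 0$ for all $v \in T_x M$, the momentum map relation rewrites this as: $\w_x(X^\#_x, v) = 0$ for all $v \in T_x M$. Because $\w_x$ is non-degenerate, this holds if and only if $X^\#_x = 0$, i.e. if and only if $X \in \fk_x$. Hence $(\Im T_x\mu)^0 = \fk_x$ inside $\fk$, and taking annihilators back in $\fk^*$ gives $\Im T_x\mu = \fk_x^{\,0}$, using that $\Im T_x\mu$ is a linear subspace of the finite-dimensional space $\fk^*$ so that double annihilator returns the original subspace. (One should note that here $\fk$ is finite-dimensional since $\bK$ is a compact Lie group, so these annihilator dualities are unproblematic; the manifold $M$ may be infinite-dimensional, but $\w_x$ is still assumed non-degenerate, which is all that is used.)

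The main subtlety to watch is the direction of the equivalence in part (2): one needs non-degeneracy of $\w_x$ precisely to pass from ``$\w_x(X^\#_x, v) = 0$ for all $v$'' to ``$X^\#_x = 0$'', and conversely that implication is what makes the argument work rather than merely giving one inclusion. Apart from that, the proof is purely formal linear algebra downstream of the momentum map identity; no integrability of $J$, no compactness of $\Si_g$, and in fact not even the hypothesis $x \in \fibre$ or freeness of the action enter, although those will be needed for the subsequent parts of Theorem \ref{thm:Kahler_red}.
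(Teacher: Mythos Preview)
Your proof is correct. Part (1) is essentially identical to the paper's argument: both unwind the momentum map relation $\w_x(X^{\#}_x,v)=\langle T_x\mu\cdot v,X\rangle$ and use that $T_x(\bK\cdot x)$ is spanned by the $X^{\#}_x$.

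For part (2) you take a slightly different route. The paper first shows the inclusion $\Im T_x\mu \subset \fk_x^{\,0}$ directly (if $X\in\fk_x$ then $X^{\#}_x=0$, so $\langle T_x\mu\cdot v,X\rangle=\w_x(X^{\#}_x,v)=0$), and then concludes equality by a dimension count using (1). You instead compute the annihilator $(\Im T_x\mu)^0=\fk_x$ via non-degeneracy of $\w_x$ and then invoke double-annihilator duality in the finite-dimensional space $\fk^*$. Both arguments are standard and essentially equivalent; yours has the mild advantage that it never appeals to rank--nullity on $T_xM$, so it reads more cleanly if one has the infinite-dimensional application in mind, whereas the paper's dimension count is perhaps more direct once one has (1) in hand. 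Your observation that neither the hypothesis $x\in\fibre$ nor freeness is actually used in this lemma is also correct.
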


\begin{proof}
Take $v$ in $T_xM$.
\begin{enumerate}
\item $T_x\mu\cdot v =0$ in $\fk^*$ if and only if, for all $X\in\fk$, $<T_x\mu\cdot v,X> =0$, i.e. $\w_x(X^{\#}_x,v)=0$. Since any tangent vector to $\bK\cdot x$ is the value of a fundamental vector field, this is equivalent to $v\in (T_x(\bK\cdot x))^{\w_x}$.
\item Let $\xi:=T_X\mu\cdot v$. Then, for all $X\in\fk_x$, $$<\xi,X>\ =\ <T_x\mu\cdot v,X>\ =\ \w_x(X^{\#},v)\ =\ \w(0,v)\ =\ 0$$ so $\Im T_x\mu \subset \fk_x^{\, 0}$. The equality follows by dimension count, using (1). 
\end{enumerate}
\end{proof}

\begin{lemma}\label{infinetisimal_complex_action}
Let $x$ be a point in $\fibre$. Then
\begin{enumerate}
\item $\ker T_x\mu$ is a co-isotropic subspace of $T_xM$, meaning that $$(\ker T_x\mu)^{\w_x} \subset \ker T_x\mu.$$
\item Let $H_x$ denote the orthogonal complement to $(\ker T_x\mu)^{\w_x}$ in $\ker T_x\mu$ (with respect to the induced Riemannian metric on $\ker T_x\mu\subset T_x M$). Then there is a direct sum decomposition $$\underbrace{H_x\oplus T_x(\bK\cdot x)}_{\ker T_x\mu}\ \oplus\ J\, T_x(\bK\cdot x) = T_xM.$$ In particular, $H_x$ is $J$-invariant, so it is a complex vector space.
\end{enumerate}
\end{lemma}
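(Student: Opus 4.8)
The plan is to reduce the whole statement to linear algebra inside the symplectic vector space $(T_xM,\w_x)$, exploiting the two identifications provided by Lemma~\ref{kernel_and_image} together with the compatibility $\w = g(J\,\cdot\,,\,\cdot\,)$ between the symplectic form, the metric and the complex structure. As we may by the remark following Theorem~\ref{thm:Kahler_red}, I assume $\xi = 0$, so that $\mu(x) = 0$.

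The one point with genuine content is the observation that the orbit tangent space $T_x(\bK\cdot x)$ is \emph{isotropic} for $\w_x$. Indeed, for $X,Y\in\fk$ the momentum map relation gives $\w_x(X^{\#}_x,Y^{\#}_x) = \big(d\langle\mu,X\rangle\big)_x(Y^{\#}_x) = \frac{d}{dt}\big|_{t=0}\langle\mu(\exp(tY)\cdot x),X\rangle$; by equivariance of $\mu$ the curve $t\mapsto\mu(\exp(tY)\cdot x) = \Adet{\exp(tY)}\mu(x)$ is constantly equal to $\mu(x)=0$, so this derivative vanishes. Hence $T_x(\bK\cdot x)\subset\big(T_x(\bK\cdot x)\big)^{\w_x} = \ker T_x\mu$, using Lemma~\ref{kernel_and_image}(1). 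For assertion~(1) I would then use that $\w_x$ is non-degenerate, so $(W^{\w_x})^{\w_x}=W$ for every subspace $W\subset T_xM$; applying this with $W=T_x(\bK\cdot x)$ and Lemma~\ref{kernel_and_image}(1) twice gives $(\ker T_x\mu)^{\w_x}=T_x(\bK\cdot x)$, which we have just seen lies in $\ker T_x\mu$. So $\ker T_x\mu$ is co-isotropic; and moreover $H_x$ — the $g$-orthogonal complement of $(\ker T_x\mu)^{\w_x}=T_x(\bK\cdot x)$ inside $\ker T_x\mu$ — satisfies $\ker T_x\mu = H_x\oplus T_x(\bK\cdot x)$, an orthogonal direct sum.

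For assertion~(2) the key step is to re-express $\ker T_x\mu$ as a $g$-orthogonal complement. Since $J$ is a $g_x$-isometry with $J^2=-\Id$, one has $\w_x(v,w)=g_x(Jv,w)=-g_x(v,Jw)$ for all $v,w$; hence $v\in\big(T_x(\bK\cdot x)\big)^{\w_x}=\ker T_x\mu$ if and only if $v$ is $g_x$-orthogonal to $J\,T_x(\bK\cdot x)$. Thus $\ker T_x\mu$ is exactly the $g_x$-orthogonal complement of $J\,T_x(\bK\cdot x)$, which gives the orthogonal direct sum $T_xM = \ker T_x\mu \oplus J\,T_x(\bK\cdot x)$; combining with the previous paragraph, $T_xM = H_x \oplus T_x(\bK\cdot x)\oplus J\,T_x(\bK\cdot x)$, with the first two summands adding up to $\ker T_x\mu$. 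Finally, these identities exhibit $H_x$ as the $g_x$-orthogonal complement of the subspace $T_x(\bK\cdot x)\oplus J\,T_x(\bK\cdot x)$, which is $J$-invariant; since $J$ preserves $g_x$, its orthogonal complement $H_x$ is $J$-invariant as well, and therefore $H_x$ is a complex vector space.

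I expect no real obstacle here: beyond the isotropy of the orbit, everything is bookkeeping with symplectic and orthogonal complements in a finite-dimensional inner product space, and the freeness hypothesis of Theorem~\ref{thm:Kahler_red} is not actually needed. The only thing to watch is keeping the sign conventions in $\w=g(J\,\cdot\,,\,\cdot\,)$ consistent when translating $\w_x$-orthogonality into $g_x$-orthogonality.
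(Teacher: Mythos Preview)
Your proof is correct. For part~(1) you and the paper do essentially the same thing: both establish $T_x(\bK\cdot x)\subset\ker T_x\mu$ via equivariance of $\mu$ and the fact that $\mu(x)$ is fixed by the coadjoint action, then invoke Lemma~\ref{kernel_and_image} and non-degeneracy of $\w_x$.

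For part~(2) the routes diverge slightly. The paper argues by dimension count together with showing $\ker T_x\mu\cap J\,T_x(\bK\cdot x)=\{0\}$ via a norm computation: if $v=JX^{\#}_x$ lies in $\ker T_x\mu$ then $g(v,v)=\w(X^{\#}_x,JX^{\#}_x)=\langle T_x\mu\cdot JX^{\#}_x,X\rangle=0$. You instead identify $\ker T_x\mu$ directly as the $g_x$-orthogonal complement of $J\,T_x(\bK\cdot x)$ using $\w_x(v,w)=-g_x(v,Jw)$. Your approach has the advantage that the decomposition comes out orthogonal from the start, which makes the $J$-invariance of $H_x$ transparent: $H_x$ is then the $g_x$-orthogonal complement of the $J$-stable subspace $T_x(\bK\cdot x)\oplus J\,T_x(\bK\cdot x)$, and orthogonal complements of $J$-stable subspaces are $J$-stable. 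The paper's proof leaves this last point implicit. Your observation that freeness is unnecessary is also correct; the dimension bookkeeping goes through with $\dim T_x(\bK\cdot x)=\dim\fk-\dim\fk_x=\dim\fk_x^{\,0}=\dim\Im T_x\mu$ regardless.
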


\begin{proof}
Recall from Lemma \ref{kernel_and_image}, that $\ker T_x\mu = (T_x(\bK\cdot x))^{\w_x}$. Since $\w_x$ is non-degenerate, this implies that $(\ker T_X\mu)^{\w_x}= T_x(\bK\cdot x)$.
\begin{enumerate}
\item We want to show that $T_x(\bK\cdot x) \subset \ker T_x\mu$. Let $v=X^{\#}_x\in T_x(\bK\cdot x)$. Then, by definition,

\begin{eqnarray*}
T_x\mu\cdot X^{\#}_x & = & \frac{d}{dt}|_{t=0} \left(\mu(\exp(tX)\cdot x)\right) \\
& = & \frac{d}{dt}|_{t=0} \left(\Adet{\exp(tX)}\mu(x)\right) \\
& = & 0
\end{eqnarray*}

\noindent since $\mu$ is equivariant and $\mu(x)$ lies in the centre of $\fk$ so is acted upon trivially by $\exp(tX)$.
\item We want to show that $$\ker T_x\mu \oplus J\, T_x(\bK\cdot x) = T_x M.$$ Since the dimensions match, we need only show that $\ker T_x\mu \cap JT_x(\bK\cdot x) =\{0\}$. Let  $v=J X^{\#}_x \in \ker T_x\mu \oplus J T_x(\bK\cdot x)$. Then

\begin{eqnarray*}
g(v,v) & = & g(JX^{\#}_x, J X^{\#}_x) \\
& = & \w(X^{\#}_x,JX^{\#}_x) \\
& = & <T_x\mu\cdot JX^{\#}_x,X> \\
& = & <0,X>=0.
\end{eqnarray*}

\noindent Since $g$ is positive definite, this implies that $v=0$.
\end{enumerate}
\end{proof}

\noindent We are now in a position to prove Theorem \ref{thm:Kahler_red}.

\begin{proof}[Proof of Theorem \ref{thm:Kahler_red}]
As noted before, we may assume that $\xi=0$.
\begin{enumerate}
\item Observe that $\bK$ acts on $\fibre$ because $\mu$ is equivariant and $0$ is $\bK$-invariant in $\fk^*$. By Lemma \ref{kernel_and_image}, $\Im T_x\mu=\fk_x^{\, 0}$. Since $\bK$ acts freely on $\fibre$, we have $\fk_x=\{0\}$ for all $x\in\fibre$. So $T_x\mu$ is surjective and, by the submersion theorem, $\fibre$ is a submanifold of $M$. Since $\bK$ is compact and acts freely on the manifold $\fibre$, the topological quotient $\fibre/\bK$ is a manifold and $p:\fibre\lra  \fibre/\bK$ is a principal fibration of group $\bK$.
\item Note that the tangent space at $x$ to the manifold $\fibre$ is $\ker T_x\mu$. Saying that $i^*\w$ is basic with respect to $p$ means that
\begin{enumerate}
\item $L_{X^{\#}}(i^*\w) = 0$ for all $X$ in $\fk$,
\item $(i^*\w)(X^{\#},\,\cdot\,) = 0$ for all $X$ in $\fk$.
\end{enumerate}
The first condition follows from the fact that the action of $\bK$ preserves $\w$. The second condition is a consequence of the fact, proved in Lemma \ref{kernel_and_image}, that $T_x(\bK\cdot x) = (\ker T_x\mu)^{\w_x}$.
\item Since $i^*\w$ is basic with respect to $p$, there exists a unique $2$-form $\wred$ on $\fibre/\bK$ satisfying $p^*\wred=i^*\w$. Explicitly, it is defined by

\begin{equation}\label{def_omega_red}
\wred_{[x]}([v],[w]) = \w_x(v,w)
\end{equation}

\noindent (see Exercise \ref{check_def_omega_red} for details). It is closed because $$p^*(d\wred) = d(p^*\wred) = d(i^*\w) = i^*(d\w) =0$$ (and the exterior differential of a basic form is a basic form). The kernel of $\wred$ at $[x]\in \fibre/\bK$ is in bijection with $$(\ker T_x\mu)^{\w_x} / T_x(\bK\cdot x)\, ,$$ but this space is trivial by Lemma \ref{kernel_and_image}. So $\wred$ is non-degenerate.
\item In the notation of Lemma \ref{infinetisimal_complex_action}, the tangent space at $[x]$ to $\fibre/\bK$ is in bijection with $H_x\subset T_xM$, which is a complex subspace of $T_xM$. Through this identification, $\fibre/\bK$ acquires an almost complex structure and a Riemannnian metric, which are compatible (i.e. the almost complex structure is an isometry of the metric on each tangent space). The associated $2$-form is $\wred$ because $\wred$ is the projection of $\w$ and $\w=g(J\,\cdot\,,\,\cdot\,)$ on $H_x$.
\item It remains to show that the almost complex structure $J^{\mathrm{red}}$ is integrable. Since we already know that $\wred$ is symplectic and compatible with $J^{\mathrm{red}}$, it suffices to show that the Nijenhuis tensor $N_{J^{\mathrm{red}}}$ of $J^{\mathrm{red}}$ vanishes (see for instance \cite{McDuff}, Lemma 4.15). Let $X,Y$ be two projectable vector fields on $\fibre$. (i.e. $k_*X=X$ and $k_*Y=Y$ for all $k\in\bK$). Since the action is K\"ahler, it commutes to $J$, so $JX$ and $JY$ are also projectable. So is the bracket of two projectable vector fields. Moreover, by definition of $J^{\mathrm{red}}$, one has $J^{\mathrm{red}}p_* X = p_* JX$ and $J^{\mathrm{red}}p_*Y=p_*JY$. So 

\begin{eqnarray*}
N_{J^{\mathrm{red}}}(p_*X,p_*Y) & = & [J^{\mathrm{red}}p_*X,J^{\mathrm{red}}p_*Y] - J^{\mathrm{red}} [J^{\mathrm{red}}p_*X,p_*Y]  \\ && - J^{\mathrm{red}}[p_*X,J^{\mathrm{red}}p_*Y] - [p_*X,p_*Y] \\
& = & p_*([JX,JY]-J[JX,Y]-J[X,JY] - [X,Y]) \\
& = & p_* N_J(X,Y)
\end{eqnarray*}

\noindent but $N_J=0$ on $M$, so $N_{J^{\mathrm{red}}}=0$.
\end{enumerate}
\end{proof}

\noindent A wonderful account of reduction theory is given in \cite{HKLR}.

\subsection{Donaldson's Theorem}\label{section_on_Donaldson_thm}

In \cite{Don_NS}, Donaldson proposed a differential-geo\-metric proof of the celebrated Narasimhan-Seshadri theorem (\cite{NS}) which magnificiently complemented the symplectic approach to holomorphic vector bundles on a curve of Atiyah and Bott. Donaldson's theorem echoes, in an infinite-dimensional setting, a result by Kempf and Ness, relating semi-stable closed orbits of the action of a complex reductive group to the action of a maximal compact sub-group of that group. Thanks to a differential-geometric characterisation of stability, Donaldson's theorem establishes a homeomorphism between the moduli space $\cM_{\Si_g}(r,d)$ and the symplectic quotient $F^{-1}(\{\ast i2\pi\frac{d}{r\Id_E}\}) / \cG_h$.

\begin{theorem}[Donaldson, \cite{Don_NS}]
Fix a smooth Hermitian vector bundle $(E,h)$ of rank $r$ and degree $d$. Let $\cE$ be a holomorphic vector bundle of rank $r$ and degree $d$, and let $O(\cE)$ be the corresponding orbit of unitary connections on $(E,h)$. Then $\cE$ is stable if and only if $O(\cE)$ contains a unitary connection $A$ satisfying:
\begin{enumerate}
\item $\mathrm{Stab}_{\cG_E}(A)\simeq\C^*$.
\item $F_A=\ast i 2\pi \frac{d}{r} \Id_E$.
\end{enumerate}
Moreover, such a connection, if it exists, is unique up to an element of the \emph{unitary} gauge group $\cG_h$.
\end{theorem}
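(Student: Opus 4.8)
\emph{Setup and reformulation.} The plan is to prove the statement as the gauge-theoretic incarnation of the Kempf--Ness theorem, working on the K\"ahler manifold $\cA(E,h)$. By Theorem~\ref{curvature_equals_momentum} the curvature map $A\mapsto F_A$ is an equivariant momentum map for the $\cG_h$-action, and since $i2\pi\tfrac{d}{r}\Id_E$ is a constant anti-Hermitian, hence $\mathrm{Ad}$-invariant, section, it is a central element $\xi$ of $Lie(\cG_h)^*$; thus $\mu(A):=F_A-\ast\,i2\pi\tfrac{d}{r}\Id_E$ is again an equivariant momentum map, condition (2) says exactly that $A\in\mu^{-1}(0)$, and $\mu^{-1}(0)/\cG_h$ is the K\"ahler quotient of $\cA(E,h)$ at $\xi$ in the sense of Theorem~\ref{thm:Kahler_red}. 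On the other hand $\cG_E=\cG_h^{\C}$ acts on $\cA(E,h)$ (Exercise~\ref{proof_extension_of_unitary_gauge_action}) with orbits the isomorphism classes of holomorphic structures, so $O(\cE)$ is one such orbit and $\mathrm{Stab}_{\cG_E}(A)=\Aut(\cE)=(\End\cE)^{\times}$ for any $A\in O(\cE)$; hence (1) says precisely that $\cE$ is simple, and if $\cE$ is stable then (1) holds automatically by Proposition~\ref{stable_implies_simple}. The theorem therefore reduces to: \emph{(a)} if $\cE$ is stable, $O(\cE)$ meets $\mu^{-1}(0)$; \emph{(b)} if some $A\in O(\cE)\cap\mu^{-1}(0)$ has $\C^*$-stabiliser, then $\cE$ is stable; \emph{(c)} $O(\cE)\cap\mu^{-1}(0)$ is a single $\cG_h$-orbit.

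\emph{Proof of (b).} Let $A\in O(\cE)$ with $F_A=\ast\,i2\pi\tfrac{d}{r}\Id_E$ and let $0\neq\cF\subsetneq\cE$ be a holomorphic sub-bundle. Equip $\cF$ with the restricted metric and its Chern connection; the Gauss--Codazzi relation expresses the curvature of $\cF$ in terms of the orthogonal projection of $F_A$ and the second fundamental form $\beta$, and integrating $\tfrac{i}{2\pi}\tr$ over $\Si_g$ (Chern--Weil, Definition~\ref{def_of_degree}) yields $\mu(\cF)\le\mu(\cE)$, with equality if and only if $\beta=0$, i.e. if and only if $\cF$ is a holomorphic direct summand of $\cE$. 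A proper direct summand, however, furnishes a holomorphic endomorphism of $\cE$ not proportional to $\Id_E$, contradicting $\mathrm{Stab}_{\cG_E}(A)\simeq\C^*$, i.e. $\End\cE=\C$. Hence $\mu(\cF)<\mu(\cE)$ for every non-trivial $\cF$, so $\cE$ is stable.

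\emph{Proof of (a).} Fix $A_0\in O(\cE)$; together with the Cartan decomposition $\cG_E=\cG_h\cdot P$, where $P$ is the space of positive self-adjoint sections of $\End(E)$, the orbit map $\cG_E\to O(\cE)$ identifies $O(\cE)/\cG_h$ with $P$ modulo scalings, equivalently with the space of Hermitian metrics on the holomorphic bundle $\cE$ modulo scaling, each metric giving a unitary (Chern) connection and condition (2) becoming the Hermitian--Einstein equation. Following Donaldson, introduce on this space the Kempf--Ness (Donaldson) functional $M$, the primitive of the closed $1$-form $h\mapsto\langle F_{A_h}-\ast\,i2\pi\tfrac{d}{r}\Id_E,\,\cdot\,\rangle_{L^2}$: it is convex along the natural geodesics $t\mapsto h\,e^{ts}$ and its critical points are exactly the Hermitian--Einstein metrics. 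The heart of the proof is that \emph{stability forces $M$ to be proper} modulo the $\mathbb{R}_{>0}$ of scalings: along any geodesic ray $M$ has a linear lower bound whose slope, were it non-positive, would produce via an eigenvalue-degeneration/weak-subbundle argument a saturated holomorphic subsheaf $\cF\subset\cE$ with $\mu(\cF)\ge\mu(\cE)$, against stability. Properness yields a minimiser in a suitable Sobolev completion, and elliptic bootstrapping from the Hermitian--Einstein equation makes it smooth; its Chern connection is the required $A\in O(\cE)$.

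\emph{Proof of (c), and the main difficulty.} If $A,A'\in O(\cE)$ both satisfy (1)--(2), write $A'=g\cdot A$ with $g\in\cG_E$ and decompose $g=u\,e^{s}$ with $u\in\cG_h$ and $s$ self-adjoint. Since $\ast\,i2\pi\tfrac{d}{r}\Id_E$ is central, $u^{-1}\cdot A'=e^{s}\cdot A$ still satisfies (2) (curvature transforms by $\mathrm{Ad}$), so we may assume $g=e^{s}$; subtracting the Hermitian--Einstein equations for $A$ and $e^{s}\cdot A$ and invoking the geodesic convexity of $M$ (equivalently, a short integration by parts on the closed surface $\Si_g$) forces $e^{s}$ to be $d_A^{0,1}$-parallel, hence a holomorphic endomorphism of $\cE$; since $\cE$ is simple, $e^{s}=\lambda\Id_E$ with $\lambda>0$, which lies in $\mathrm{Stab}_{\cG_E}(A)\simeq\C^*$, so $e^{s}\cdot A=A$ and $A'=u\cdot A$ with $u\in\cG_h$. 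The genuine obstacle in this program is part \emph{(a)}: setting everything up in Sobolev completions of $\cA(E,h)$ and $\cG_E$ (in the spirit of \cite{AB,Don_NS,DK}), proving that stability makes the Donaldson functional proper modulo scalars, and extracting an honest destabilising subsheaf from a non-compact minimising sequence — all carried out in \cite{Don_NS}. Parts \emph{(b)} and \emph{(c)} are, by contrast, elementary: Chern--Weil together with simplicity, and a single integration by parts.
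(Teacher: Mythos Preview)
The paper does not prove this theorem: it is stated with a reference to \cite{Don_NS}, followed only by an informal paragraph explaining what conditions (1) and (2) mean (irreducibility, minimal Yang--Mills) and then the Narasimhan--Seshadri corollary. There is no ``paper's own proof'' to compare against.

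Your sketch is a faithful outline of the standard gauge-theoretic argument and correctly isolates the easy directions (b) and (c) from the hard existence statement (a). Two small remarks. First, your description of (a) as ``properness of the Donaldson functional along geodesics'' is closer in spirit to Donaldson's later, higher-dimensional work; the 1983 proof in \cite{Don_NS} for curves is organised differently, as an induction on rank: one minimises a Yang--Mills-type functional over the $\cG_E$-orbit, and if a minimiser fails to exist one uses Uhlenbeck compactness to extract a weak limit whose kernel generates a holomorphic sub-bundle $\cF\subset\cE$ with $\mu(\cF)\ge\mu(\cE)$, contradicting stability. The underlying mechanism---failure of compactness produces a destabilising sub-object---is the same, but the functional and the extraction are not quite those you describe. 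Second, in (c) the step ``integration by parts forces $e^{s}$ to be $d_A^{0,1}$-parallel'' deserves one more line: what one actually obtains is $\Delta_A s + \text{(non-negative term)}=0$, whence $d_A s=0$, so $s$ (not just $e^s$) is parallel and therefore a constant multiple of $\Id_E$ by simplicity.

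None of this is a genuine gap in your write-up, which openly flags (a) as the place where the real analysis lives; but since the paper itself offers no proof, what you have is a proof \emph{plan} referencing \cite{Don_NS,DK}, not a self-contained argument.
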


\noindent Indeed, since we know that isomorphism classes of holomorphic vector bundles of rank $r$ and degree $d$ are in one-to-one correspondence with \textit{complex} gauge group orbits of unitary connections on $(E,h)$, it seems natural to look for which unitary connections or more accurately, which \textit{orbits} of unitary connections, correspond to isomorphism classes of \textit{stable} holomorphic vector bundles of rank $r$ and degree $d$. Donaldson's theorem states that these orbits are precisely the complex gauge orbits of unitary connections which are both \textit{irreducible} (condition (1): $\cE=(E,A)$ is an indecomposable holomorphic vector bundle) and \textit{minimal Yang-Mills connections} (condition (2): $A$ is an absolute minimum of the Yang-Mills functional $A\lmt \int_{\Si_g} \|F_A\|^2 \vol_{\Si_g}$, see \cite{AB,Don_NS}). Moreover, any such complex gauge orbit contains a unique \textit{unitary} gauge orbit.

\begin{corollary}[The Narasimhan-Seshadri theorem, \cite{NS}]
Graded isomorphism classes of poly-stable vector bundles of rank $r$ and degree $d$ are in one-to-one correspondence with unitary gauge orbits of minimal Yang-Mills connections: $$\cM_{\Si_g}(r,d) \simeq F^{-1}\left(\{\ast i 2\pi \frac{d}{r} \Id_E\}\right) / \cG_h\, .$$
\end{corollary}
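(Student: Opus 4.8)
The plan is to derive this identification from Donaldson's Theorem, which settles the \emph{stable} case, using the structure theory of Section~\ref{stable_bundles} to reduce the \emph{poly}-stable case to it. Fix a smooth Hermitian bundle $(E,h)$ of type $(r,d)$, set $\mu := d/r$ and $c_0 := \ast\, i2\pi\tfrac{d}{r}\,\Id_E \in \Om^2(\Si_g;\fu(E,h))$, and recall from Section~\ref{stable_bundles} that $\cM_{\Si_g}(r,d)$ is the set of isomorphism classes of poly-stable bundles of type $(r,d)$ (two poly-stable bundles are $S$-equivalent exactly when isomorphic, a poly-stable bundle being its own graded object). I will show that
$$\Phi:\ F^{-1}(\{c_0\})\big/\cG_h \;\lra\; \cM_{\Si_g}(r,d),\qquad [A]\;\lmt\;\big[(E,\,d_A^{\,0,1})\big]$$
is a well-defined bijection; its inverse will visibly be the map sending a poly-stable bundle to the gauge class of the orthogonal direct sum of the Donaldson connections of its stable summands.

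\emph{$\Phi$ is well-defined, and this is the main step.} Given $A$ with $F_A = c_0$, one must check that $\cE := (E, d_A^{\,0,1})$ is poly-stable of slope $\mu$. Let $\cF\subset\cE$ be a non-zero holomorphic sub-bundle, endowed with the metric induced from $h$ and its Chern connection. Holomorphicity of $\cF$ forces its second fundamental form $\beta$ in $\cE$ to be of type $(1,0)$, and combining the Gauss equation for $\cF$ with the Chern--Weil formula and integrating over the compact surface $\Si_g$ yields $\deg\cF \leq (\rk\cF)\,\tfrac{d}{r}$, with equality if and only if $\beta=0$; hence $\mu(\cF)\leq\mu(\cE)=\mu$, so $\cE$ is semi-stable, and when $\mu(\cF)=\mu$ one has $\beta=0$, so $\cF$ is $A$-parallel, $\cE=\cF\oplus\cF^{\perp}$ both holomorphically and isometrically, and $A=A_\cF\oplus A_{\cF^{\perp}}$ with $F_{A_\cF}$, $F_{A_{\cF^{\perp}}}$ again central of slope $\mu$ on the respective sub-bundles. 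An induction on $\rk E$ then gives poly-stability of $\cE$: either $\cE$ has no non-trivial sub-bundle of slope $\mu$, hence is stable, or it splits orthogonally into two central-curvature bundles of strictly smaller rank and slope $\mu$, each poly-stable by the inductive hypothesis. Finally, an element $u\in\cG_h\subset\cG_E$ moves $d_A^{\,0,1}$ within its $\cG_E$-orbit (Exercise~\ref{proof_extension_of_unitary_gauge_action}), hence does not change the isomorphism class of $(E,d_A^{\,0,1})$, so $\Phi$ descends to $\cG_h$-orbits.

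\emph{$\Phi$ is surjective.} Let $\cE\simeq\cF_1\oplus\cdots\oplus\cF_k$ be poly-stable of type $(r,d)$ with $\cF_i$ stable of type $(r_i,d_i)$ and $d_i/r_i=\mu$. Donaldson's Theorem applied to each $\cF_i$ produces a Hermitian bundle $(E_i,h_i)$ of type $(r_i,d_i)$ and a unitary connection $A_i$ with $F_{A_i}=\ast\, i2\pi\tfrac{d}{r}\,\Id_{E_i}$ whose $(0,1)$-part represents $\cF_i$, unique up to $\cG_{h_i}$. Realising the $(E_i,h_i)$ as mutually $h$-orthogonal sub-bundles of $(E,h)$ -- possible after at worst replacing $h$ by an equivalent metric, which affects neither side up to canonical isomorphism -- the connection $A:=\bigoplus_i A_i$ is unitary on $(E,h)$, has $F_A=c_0$ and $d_A^{\,0,1}=\bigoplus_i d_{A_i}^{\,0,1}$, so $\Phi([A])=[\bigoplus_i\cF_i]=[\cE]$.

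\emph{$\Phi$ is injective, which also locates the main obstacle.} Suppose $A,A'\in F^{-1}(\{c_0\})$ with $(E,d_A^{\,0,1})\simeq(E,d_{A'}^{\,0,1})=:\cE$, poly-stable of slope $\mu$. Grouping the stable summands of $\cE$ by isomorphism type, write $\cE\simeq\bigoplus_j\C^{m_j}\otimes\mathcal{S}_j$ with the $\mathcal{S}_j$ pairwise non-isomorphic stable bundles of slope $\mu$; each isotypic sub-bundle $\C^{m_j}\otimes\mathcal{S}_j$ has slope $\mu$, so by the splitting analysis of the second paragraph any connection in $F^{-1}(\{c_0\})$ over $\cE$ is $A$-parallel on it and restricts there to a central-curvature connection, which -- decomposing the block further into $A$-parallel stable summands each isomorphic to $\mathcal{S}_j$ and invoking the uniqueness clause of Donaldson's Theorem for the stable bundle $\mathcal{S}_j$ -- is $\cG_h$-conjugate to $\Id_{\C^{m_j}}\otimes A_j$ for a fixed Donaldson connection $A_j$ of $\mathcal{S}_j$. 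Assembling these conjugations over $j$ (the blocks for $j\neq j'$ not interacting, since $\Hom(\mathcal{S}_j,\mathcal{S}_{j'})=0$ and $\End\mathcal{S}_j\simeq\C$ by Propositions~\ref{isom_or_zero} and~\ref{stable_implies_simple}) shows $A$ and $A'$ are both $\cG_h$-conjugate to $\bigoplus_j(\Id_{\C^{m_j}}\otimes A_j)$, hence to one another. Thus $\Phi$ is a bijection, establishing $\cM_{\Si_g}(r,d)\simeq F^{-1}(\{\ast\, i2\pi\tfrac{d}{r}\,\Id_E\})/\cG_h$. The genuinely hard part is the differential geometry of the second paragraph -- the $(1,0)$-type of $\beta$, the Gauss equation, and the sub-bundle degree inequality together with its rigidity in the equality case are Kobayashi--Hitchin-type estimates that are not set up in these notes and would need to be imported (for instance from \cite{Kobayashi} or \cite{DK}); everything else is bookkeeping built on top of Donaldson's Theorem and Propositions~\ref{isom_or_zero}--\ref{stable_implies_simple}.
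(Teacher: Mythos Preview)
The paper does not actually prove this corollary; it is stated immediately after Donaldson's Theorem and left without argument. Your proposal therefore supplies what the paper omits, and the overall structure you give --- a map $\Phi$ from the level set modulo $\cG_h$ to isomorphism classes of poly-stable bundles, checked to be well-defined, surjective, and injective --- is correct and is the natural way to deduce the corollary from Donaldson's Theorem together with the structure theory of Section~\ref{stable_bundles}.

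One remark on economy. For well-definedness you import the second-fundamental-form / Gauss-equation estimate to show that a central-curvature connection yields a poly-stable bundle. This is valid, but it is more than you need and, as you note, is not developed in these notes. A shorter route, using only what is already on the table, is: any unitary connection $A$ on $(E,h)$ decomposes $(E,h)$ orthogonally into $A$-irreducible Hermitian summands (via the commutant of the holonomy), and if $F_A=\ast\,i2\pi\tfrac{d}{r}\Id_E$ then each summand again has curvature $\ast\,i2\pi\tfrac{d}{r}\Id$, hence slope $\mu$ by Chern--Weil; each summand is then irreducible with central curvature, so stable by the ``if'' direction of Donaldson's Theorem, and $\cE$ is poly-stable. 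This bypasses the Kobayashi-type inequality entirely and is presumably why the paper feels entitled to call the statement a corollary. Your surjectivity and injectivity arguments are fine as written; the injectivity step is a bit compressed (aligning the two orthogonal splittings of $(E,h)$ by a unitary gauge transformation deserves a sentence), but nothing is wrong.
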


\subsection{Exercises}~

\begin{multicols}{2}

\begin{exercise}
Let $(M,J,g,\w)$ be a K\"ahler manifold and let $f:M\lra \R$ be a smooth function.  We recall that $g$ and $\w$ are non-degenerate, so define isomorphisms between $1$-forms and vector fields. The \textit{symplectic gradient} of $f$ is the vector field $\nabla^{\mathrm{symp}}_f$ defined by $$\w(\nabla^{\mathrm{symp}}_f,\,\cdot\,) = df.$$ The \textit{Riemannian gradient} of $f$ is the vector field $\nabla^{\mathrm{Riem}}_f$ defined by $$g(\nabla^{\mathrm{Riem}}_f,\,\cdot\,) = df.$$ 
\textbf{a.} Show that $f$ is constant on the integral curves of $\nabla^{\mathrm{symp}}_f$ (i.e. $df(\nabla^{\mathrm{symp}}_f)=0$). This translates to: the symplectic gradient of $f$ is tangent to the level sets of $f$ (when these are level manifolds).
\textbf{b.} Show that $$\nabla^{\mathrm{Riem}}_f = J\, \nabla^{\mathrm{symp}}_f$$ Recall that, indeed, the Riemannian gradient is orthogonal to the level manifolds of a function (it indicates the directions in which $f$ most rapidly increases).
\end{exercise}

\begin{exercise}\label{check_def_omega_red}
Show that the $2$-form $\wred$ defined in \eqref{def_omega_red} is a well-defined differential $2$-form on $\fibre/\bK$.
\end{exercise}

\begin{exercise}
Generalise Theorem \ref{thm:Kahler_red} to the quotient $$\mu^{-1}(\{\xi\})/\bK_{\xi}$$ where $\xi\in\fk^*$ no longer lies in the centre of $\fk^*$ and $\bK_{\xi}$ is the stabiliser of $\xi$ for the co-adjoint action.
\end{exercise}

\begin{exercise}
Burn the present notes and read the articles of Atiyah-Bott, Harder-Narasimhan, Donaldson and Daskalopoulos instead.
\end{exercise}

\end{multicols}

\end{document}